\numberwithin{equation}{section}
\newcommand{\pol}{\text{Pol}}
\theoremstyle{plain}
\newtheorem{theorem}{Theorem}[section]
\newtheorem{lemma}[theorem]{Lemma}
\newtheorem{proposition}[theorem]{Proposition}
\newtheorem{example}[theorem]{Example}
\theoremstyle{definition}
\newtheorem{definition}[theorem]{Definition}
\newtheorem{remark}[theorem]{Remark}
\begin{document}


\title[Meet-reducible submaximal clones and nontrivial equivalence relations]{Meet-reducible submaximal clones determined by nontrivial equivalence relations.}

\author[L. E. F. Di\'ekouam]{Luc E. F. Di\'ekouam}
\email{lucdiekouam@yahoo.fr} 

\address{Department of mathematics\\Ecole Normale Sup\'erieure\\
University of Maroua \\\text{   }\hspace{2cm} P.O. Box 55 Maroua
\\Cameroon}

\author[E. R. A. Temgoua]{\'Etienne R. A. Temgoua}
\email{retemgoua@yahoo.fr}

\address{Department of Mathematics\\Ecole Normale Sup\'erieure\\University of Yaound\'e-1\\\text{   }\hspace{2cm}
P.O. Box 47 Yaound\'e\\Cameroon}

\author[M. TONGA]{Marcel TONGA}
\email{tongamarcel@yahoo.fr}

\address{Department of Mathematics\\Faculty of Sciences\\University of Yaound\'e-1\\
P.O. Box 812 Yaound\'e\\Cameroon}


\subjclass[2010]{Primary: 08A40; Secondary: 08A02, 18B35.}

\keywords{clones, submaximal, equivalence relations, maximal}

\begin{abstract}
The structure of the lattice of clones on a finite set has been
proven to be very complex. To better understand the top of this
lattice, it is important to provide a characterization of submaximal
clones in the  lattice of clones. It is known that the clones
$\pol(\theta)$ and $\pol(\rho)$ (where $\theta$ is a nontrivial
equivalence relation on $E_k = \{0, . . . ,k - 1\}$, and $\rho$ is
among the six types of relations which characterize maximal clones)
are maximal clones. In this paper, we provide a classification of
relations (of Rosenberg's List) $\rho$ on $E_k$ such that the clone
$\pol(\theta) \cap \pol(\rho)$ is maximal in $\pol(\theta)$.
\end{abstract}
\maketitle
\section{Introduction}
The structure of the lattice of clones on a finite set of more than
two elements is quite complex. An indication of such a complexity is
through its cardinality, which is $2^{\aleph_0}$. For a better
picture of some intervals in this lattice, it is important to
provide a characterization of maximal and submaximal clones. Maximal
clones have been investigated extensively by I. G. Rosenberg, and a
complete characterization of these can be found in \cite{ROS65}.
More precisely, it is proved that for a given nontrivial equivalence
relation $\theta$ on a finite set and a central relation $\rho$ on
the same set, the clones $\pol(\theta)$ and $\pol(\rho)$ are
maximal. For a unary central relation on an arbitrary finite set,
Rosenberg and Szendrei \cite{ROS70-2, ROS85} investigated the
submaximal clones of their polymorphisms and obtain new results on
polymorphism of prime permutations on a finite set. Submaximal
clones for a set with two and three elements were completely
described and classified in \cite{LAU,PO21,PO41}. However, for sets
with more than three elements, only partial results on their
submaximal clones are found in the literature (see for e.g.,
\cite{LAU}). Recently, Temgoua and Rosenberg \cite{TEM-ROS} obtained
a characterization of all binary central relations such that the
clone $\pol(\theta)\cap\pol(\rho)$ is maximal in $\pol(\theta)$,
given any nontrivial equivalence relation $\theta$ and a binary
central relation $\rho$.

In this paper, we characterize all  relations $\rho$ such that the clones of the form $\pol(\theta) \cap \pol(\rho)$ is maximal in $\pol(\theta)$, where
$\theta$ is a nontrivial equivalence relation on a given finite set.

The rest of the paper is organized as follows: Section \ref{sec2}
recalls the necessary basic definitions and notations for the
clarity of our presentation. In Section \ref{sec3}, we present the
no submaximality when $\rho$ is a partial order or a prime affine
relation. Section \ref{sec4} is devoted to the characterization of
type of equivalence relations or prime permutation relations which
give submaximality. In the  Section \ref{sec5}, we characterize the
relations $\rho$ (resp. central relations or $h$-regularly generated
relations ) for which $\pol(\theta) \cap \pol(\rho)$ is maximal in
$\pol(\theta)$. Section \ref{sec8} concludes the paper.

\section{Preliminaries}\label{sec2}
In this section, we recall some of the main definitions and notations. Readers needing more background on the topic are encouraged to consult \cite{LAU}.
Let $E_k=\{0,1,\hdots,k-1\}$ be a finite set of
$k$ elements with $k\geq 3$. Let $n,s\in\mathbb{N}^{*}$. An $n$-ary operation on $E_k$ is a function from  $E_k^n$ to $E_k.$ The set of all $n$-ary
operations on $E_k$ is denoted by $\mathcal{O}^n(E_k)$ and we set $\mathcal{O}(E_k)=\bigcup\limits_{0<n<\omega}\mathcal{O}^n(E_k).$ For $1\leq i \leq s$,
the $s$-ary $i$-th projection $e_i^s$ is defined as $e_i^s(x_1,\hdots,x_s)=x_i$ for all $x_1,\hdots,x_s.$ For $f\in \mathcal{O}^n(E_k)$ and $g_1,\hdots,
g_n\in \mathcal{O}^m(E_k)$, we define their composition to be the $m$-ary operation $f[g_1,\hdots,g_n]$ defined by:
$$f[g_1,\hdots,g_n](x_1,\hdots,x_m)=f(g_1(x_1,\hdots,x_m),\hdots,g_n(x_1,\hdots,x_m)).$$
A clone on $E_k$ is a subset $F$ of $\mathcal{O}(E_k)$ which contains all the projections and is closed under composition. It is known that the
intersection of an arbitrary set of clones on $E_k$ is a clone on $E_k.$ Thus for $F\subseteq \mathcal{O}(E_k)$, there exists a smallest clone containing
$F$, called the clone generated by $F$ and denoted by $\langle F\rangle.$  $\langle F\rangle$ is also the set of term operations of the non-indexed algebra
$\mathcal{A}=(A;F)$, with $A=E_{k}$. The clones on $E_k$, ordered by inclusion, form a complete lattice denoted by $\mathcal{L}(E_k).$ A clone
$C\in\mathcal{L}(E_k)$ is called maximal if it is covered only by $\mathcal{O}(E_k)$. A clone $C\in\mathcal{L}(E_k)$ is called \textit{submaximal} if it is
covered only by a maximal clone.

Let $h$ be a positive integer. An $h$-ary relation $\rho$ is a subset of $E_k^h.$ For $\rho\subseteq E_k^2$, we write $a \,\rho\, b$ for $(a,b)\in \rho.$
An $h$-ary relation $\rho$ is called totally symmetric if for every permutation $\sigma$ of $\{1,\hdots,h\}$ and each $h$-tuple $(a_1,\hdots, a_h)\in
E_k^h$,
$$(a_1,\hdots, a_h)\in\rho \text{ if } (a_{\sigma(1)},\hdots, a_{\sigma(h)})\in\rho.$$ $\tau_h^{E_k}$ is the $h$-ary relation defined by $(a_1,\hdots,
a_h)\in\tau_h^{E_k}$ if there exist $i,j\in \{1,\hdots,h\}$ such that $i\neq j$ and $a_i=a_j.$ An $h$-ary relation $\rho$ is called totally reflexive if
$\tau_h^{E_k}\subseteq \rho.$ For $h=2$, the concepts totally reflexive and totally symmetric coincide with the usual notions of reflexive and symmetric.
If $\rho$ is totally reflexive and totally symmetric, we define the center of $\rho$ denoted by $C_{\rho}$ as follows: $$C_{\rho}=\{a\in E_k:
(a,a_2,\hdots,a_h)\in \rho \text{ for all } a_2,\hdots,a_h\in E_k\}.$$

Let $\theta$ be a binary relation and $m\in\mathbb{N}^{*}$; for
$\textbf{a} = (a_1, \ldots, a_m)\in E^{m}_{k}$ and $\textbf{b} =
(b_1,\ldots, b_m)\in E_{k}^{m}$, we write
$\textbf{a}\theta\textbf{b}$ if $(a_i,b_i) \in \theta$ for $1 \leq i
\leq m$. Assume that $\theta$ is an equivalence relation on $E_{k}$.
The $\theta$-class of $a\in E_k$ will be denoted by $[a]_{\theta}$.

A permutation $\pi$ on $E_{k}$ is prime if all cycles of $\pi$ have the same prime length.

A subset $\sigma\subseteq E_{k}^{4}$ is called affine if there is a binary operation $+$ on $E_{k}$ such that $(E_{k},+)$ is an abelian group and $(a, b,
c, d)\in\sigma\Leftrightarrow a + b = c + d$. An affine relation $\sigma$ is prime if $(E_{k},+)$ is an abelian $p$-group for some prime $p$, that is, all
elements of the group have the same prime order $p$.

An $h$-ary relation $\rho$ on $E_k$ is called central, if $\rho$ is
a nonempty proper subset of $E_{k}$ or $\rho$ has the  following
three properties: $\rho$ is totally reflexive; $\rho$ is totally
symmetric and   $ C_{\rho}$ is a nonempty proper subset of $E_{k}$.

For $h\geq3$, a family $T =
\{\mathcal{V}_{1};\cdots\mathcal{V}_{m}\}$ of equivalence relations
on $E_{k}$ is called $h$-regular if each $\mathcal{V}_{i}$ has
exactly $h$ equivalence classes and $\cap\{B_i| 1\leq i\leq m\}$ is
nonempty for arbitrary equivalence classes $B_i$ of
$\mathcal{V}_{i}$ , $1\leq i\leq m$. For $3 \leq h\leq k$, an
$h$-regular (or $h$-regularly generated) relation on $E_{k}$
determined by the $h$-regular family $T$(often denoted by
$\lambda_{T}$), consists of all $h$-tuples whose set of components
meets at most $h-1$ classes of each $\mathcal{V}_{i}$ ( $1\leq i\leq
m)$.

Let $f\in\mathcal{O}^n(E_k)$ and  $\rho$ be an $h$-ary relation on
$E_{k}$. The operation $f$ preserves $\rho$ if for all $(a_{1,i},
\hdots , a_{h,i})\in\rho$ $(i = 1,\ldots, n)$, we have
$$(f(a_{1,1},\hdots , a_{1,n}), f(a_{2,1}, \hdots, a_{2,n}), \hdots
, f(a_{h,1}, \hdots , a_{h,n}))\in\rho.$$ The set of operations on
$E_{k}$ preserving $\rho$ is a clone denoted by $\text{Pol}(\rho)$.
The maximal clones have been described for $k = 2$(respectively $k =
3$ and $k \geq 4$) in Post\cite{PO21}(respectively Jablonskij
\cite{JAB} and  Rosenberg \cite{ROS65,ROS70}). They are of the form
$\text{Pol}(\rho)$ where $\rho$ belongs to one of six families of
relations which include some familiar and easily defined relations.
For clones $C$ and $D$ on $E_{k}$, we say that $C$ is maximal in $D$
if $D$ covers $C$ in $\mathcal{L}(E_k)$; we also say that $C$ is
submaximal if $C$ is maximal in at least one maximal clone. All
submaximal clones are known for $k = 2$ (see \cite{PO21}) and $k =
3$ (see \cite{LAU}).

For $n \geq 3$, an $n$-ary operation $f$ is called a near-unanimity operation provided that $f(y,x,\hdots,x)\approx f(x,y,\hdots,x)\approx\hdots \approx
f(x,x,\hdots,y)\approx x$ for all $x,y\in E_{k}.$ We recall the following Baker-Pixley Theorem which will be used to prove our results:

\begin{theorem}\label{baker-pixley} {\cite{BA-PI}}
Let $\mathcal{A}=(A,F)$ be a finite algebra which contains a ``near unanimity function" of arity $d+1$ ($(d+1)$-ary near-unanimity term or $nu$-term).
Then, an operation $f:A^{n}\rightarrow A$ is term function for  $\mathcal{A}$ iff  each subuniverse of $\mathcal{A}^{d}$ is preserved by $f$.
\end{theorem}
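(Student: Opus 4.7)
The plan is to establish both directions of the Baker--Pixley theorem, the nontrivial content being the right-to-left implication. The left-to-right direction is immediate: any term operation of $\mathcal{A}$ preserves every relation preserved by $F$, and a subuniverse of $\mathcal{A}^d$ is exactly a $d$-ary relation preserved by all of $F$, hence preserved by $f$.

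For the converse, assume $f\in\mathcal{O}^{n}(E_k)$ preserves every subuniverse of $\mathcal{A}^{d}$, and let $u$ be the $(d+1)$-ary near-unanimity term of $\mathcal{A}$. The strategy is to build, by induction on $\ell\ge d$, for every $\ell$-element subset $Y=\{y_{1},\dots,y_{\ell}\}\subseteq A^{n}$ a term operation $t_{Y}$ of $\mathcal{A}$ with $t_{Y}(y_{i})=f(y_{i})$ for all $i$. Once $\ell=|A|^{n}$ is reached, we take $Y=A^{n}$ and obtain a term operation equal to $f$ everywhere, proving $f$ is a term function.

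For the base case $\ell=d$, given $Y=\{y_{1},\dots,y_{d}\}$, form the ``column tuples'' $c_{j}=((y_{1})_{j},\dots,(y_{d})_{j})\in A^{d}$ for $j=1,\dots,n$, and let $S_{Y}$ be the subuniverse of $\mathcal{A}^{d}$ they generate. Since $f$ preserves $S_{Y}$ by hypothesis and each $c_{j}\in S_{Y}$, applying $f$ coordinatewise yields $(f(y_{1}),\dots,f(y_{d}))\in S_{Y}$. Because $S_{Y}$ is generated by the $c_{j}$'s, there is an $n$-ary term $t_{Y}$ of $\mathcal{A}$ with $t_{Y}(y_{i})=f(y_{i})$ for $i=1,\dots,d$.

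For the inductive step, take $Y'=\{y_{1},\dots,y_{\ell+1}\}$ and, by the induction hypothesis, pick terms $t_{1},\dots,t_{d+1}$ where $t_{i}$ agrees with $f$ on $Y'\setminus\{y_{i}\}$. Set $t_{Y'}=u(t_{1},\dots,t_{d+1})$. At any $y_{j}$ with $j>d+1$, all the $t_{i}$ take value $f(y_{j})$, so $t_{Y'}(y_{j})=u(f(y_{j}),\dots,f(y_{j}))=f(y_{j})$ by idempotence of $u$ (apply the near-unanimity identities with $x=y$). At $y_{j}$ with $j\le d+1$, exactly $d$ of the $d+1$ arguments equal $f(y_{j})$ while the $j$-th may differ, so the near-unanimity identity again forces $t_{Y'}(y_{j})=f(y_{j})$. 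This completes the induction.

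The main obstacle, as usual with Baker--Pixley, is the clean bookkeeping in the inductive step: one has to see that the $(d+1)$-ary near-unanimity operation is precisely what lets one glue together local witnesses into a single global term, and that the base case is exactly the content of the subuniverse hypothesis at arity $d$.
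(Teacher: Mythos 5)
Your proof is correct and is the standard local-interpolation argument for the Baker--Pixley theorem: the paper itself offers no proof of this statement, merely citing \cite{BA-PI}, and your argument (base case from the subuniverse generated by the $n$ column tuples, inductive step gluing $d+1$ local witnesses with the near-unanimity term) is exactly the textbook one. The only cosmetic omission is the degenerate situation where a subset of $A^{n}$ has fewer than $d$ elements (or $|A|^{n}<d$), which is handled by allowing repeated rows in the base case.
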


\section{Partial order, prime affine relations}\label{sec3}
In this section we prove that the clones $\pol(\theta) \cap
\pol(\rho)$ is not maximal in $\pol(\theta)$ where $\theta$ is a
nontrivial equivalence relation on $E_{k}$ and $\rho$ is either a
partial order with least and greatest elements or a prime affine
relation on $E_{k}$.
\begin{theorem}\label{maintheo-order-equiv}
If $ \theta $  is a nontrivial equivalence relation and $ \rho $ is a partial order with least and greatest elements, on a finite set $E_{k} $, then $
\pol(\theta) \cap \pol(\rho)$ is not submaximal in $\pol(\theta)$.
\end{theorem}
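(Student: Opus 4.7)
The plan is to exhibit a binary relation $\sigma$ on $E_k$ such that
$$\pol(\theta) \cap \pol(\rho) \ \subsetneq\ \pol(\theta) \cap \pol(\sigma)\ \subsetneq\ \pol(\theta),$$
which immediately shows that $\pol(\theta) \cap \pol(\rho)$ is not covered by $\pol(\theta)$ in $\mathcal{L}(E_k)$. Let $\ell$ and $g$ denote the least and greatest elements of $\rho$, respectively. A natural candidate is the composition $\sigma := \theta \circ \rho$; that is, $(a,b) \in \sigma$ iff there exists $c \in E_k$ with $a\,\theta\,c$ and $c\,\rho\,b$. This $\sigma$ is reflexive, contains both $\theta$ and $\rho$, and is a proper subset of $E_k^2$: for any $x \notin [\ell]_{\theta}$ the pair $(x,\ell)$ does not belong to $\sigma$, since the only element $\rho$-below $\ell$ is $\ell$ itself, which lies outside $[x]_{\theta}$.

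First I would check the easy containment $\pol(\theta)\cap\pol(\rho)\subseteq\pol(\sigma)$: given $f$ in the intersection and pairs $(a_i,b_i)\in\sigma$ with witnesses $c_i$ satisfying $a_i\,\theta\,c_i\,\rho\,b_i$, one obtains $f(\bar a)\,\theta\,f(\bar c)$ by $\theta$-preservation and $f(\bar c)\,\rho\,f(\bar b)$ by $\rho$-preservation, so $(f(\bar a),f(\bar b))\in \theta\circ\rho = \sigma$.

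The heart of the argument is verifying the two strict inclusions. For $\pol(\theta)\cap\pol(\sigma)\subsetneq\pol(\theta)$, I would exhibit a permutation $\pi$ of $E_k$ that permutes the $\theta$-classes (hence preserves $\theta$) but swaps a pair of classes whose $\rho$-positions relative to $\ell$ and $g$ are incompatible, thereby sending some $\sigma$-pair outside $\sigma$. For $\pol(\theta)\cap\pol(\rho)\subsetneq\pol(\theta)\cap\pol(\sigma)$, the plan is to construct a unary operation $f$ that is constant on each $\theta$-class, with representative values $r_C \in C$ chosen so that $f$ violates $\rho$-monotonicity on some cross-class pair (giving $f\notin\pol(\rho)$) while the induced quotient map still respects $\sigma$; this is possible because $\sigma$ is genuinely weaker than $\rho$ across classes, asking only that \emph{some} $\theta$-representative of the source class realize the order.

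The main obstacle lies in the last construction: one must produce such an $f$ uniformly over all admissible pairs $(\theta,\rho)$. The delicate configuration occurs when every $\theta$-class is a $\rho$-antichain and both $[\ell]_\theta$ and $[g]_\theta$ are singletons; in that case one exploits the bounds $\ell$ and $g$ directly by permuting representatives of interior $\theta$-classes using the $\rho$-chain from $\ell$ to $g$. The nontriviality of $\theta$ combined with the presence of both extremal elements guarantees enough room in $E_k$ for such a choice, completing the verification.
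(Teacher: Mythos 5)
Your framework---exhibiting a clone strictly between $\pol(\theta)\cap\pol(\rho)$ and $\pol(\theta)$---is sound, and the easy parts of the plan check out: with $\sigma:=\theta\circ\rho$ one does have $\sigma\neq E_k^2$ (your observation about $(x,\ell)$ for $x\notin[\ell]_{\theta}$ is correct), the containment $\pol(\theta)\cap\pol(\rho)\subseteq\pol(\theta)\cap\pol(\sigma)$ follows from your composition argument, and $\pol(\theta)\cap\pol(\sigma)\subsetneq\pol(\theta)$ is obtainable with a simple unary map collapsing one $\theta$-class to $x$ and the rest to $\ell$. The genuine gap is the remaining strict inclusion $\pol(\theta)\cap\pol(\rho)\subsetneq\pol(\theta)\cap\pol(\sigma)$, which is the entire content of the theorem and which you only gesture at. The specific device you propose---a unary $f$ with $f(x)=r_{[x]_{\theta}}$ and $r_C\in C$---does not in general preserve $\sigma$: the relation $\theta\circ\rho$ is saturated under $\theta$ only in its \emph{first} coordinate, so membership of $(a,b)$ in $\sigma$ depends on $b$ itself and not merely on $[b]_{\theta}$, and replacing $b$ by another representative of its class can leave $\sigma$. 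Concretely, on $E_6$ let $\theta$ have classes $\{0\},\{1\},\{2\},\{3,4\},\{5\}$ and let $\rho$ be the bounded partial order generated by $0<1<3<5$ and $0<2<4<5$. Then $(2,4)\in\rho\subseteq\sigma$ while $(2,3)\notin\sigma$, and $(1,3)\in\sigma$ while $(1,4)\notin\sigma$, so neither choice of representative for the class $\{3,4\}$ yields a map preserving $\sigma$; every class-collapsing $f$ of your form fails. (A witness does exist in this example, e.g.\ the map fixing everything except $1\mapsto 4$, but it is found by inspection, not by your recipe.)

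What is missing, then, is a uniform proof that $\pol(\theta)\cap\pol(\theta\circ\rho)\setminus\pol(\rho)$ is nonempty for every admissible pair $(\theta,\rho)$---equivalently, that $\rho$ is not primitively positively definable from $\theta$ and $\theta\circ\rho$. You flag this yourself as ``the main obstacle'' but resolve it only with the assertion that nontriviality of $\theta$ ``guarantees enough room,'' which is not an argument. It is worth noting that the paper does not attempt a single witness relation: it works with $\rho'=\theta\circ\rho\circ\theta$ when $\rho'\neq E_k^2$, and otherwise passes to $r=\rho\cap\theta$ and $r\circ r^{-1}$, handling the final subcase not by a direct construction but by citing an external result on the meet-irreducibility of $\pol(r)$ as a maximal subclone of $\pol(\theta)$. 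That the published argument needs this case split and an outside theorem strongly suggests the separation you leave open is not routine. As written, your proposal establishes the non-degeneracy of $\sigma$ and the two easy inclusions, but not the theorem.
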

\begin{proof}
$\theta$ and $\rho$ are incomparable. In fact,
$\theta\nsubseteq\rho$ because $\theta $ is a nontrivial symmetric
relation and $\rho$ is an antisymmetric relation. We also have
$\rho\nsubseteq\theta$, otherwise since $\rho$ is a partial order
with least and greatest element and $\theta$ is a transitive
relation, $\theta$ will be trivial (equal to $E_{k}^{2}$); this is a
contradiction. Without lost of generality we can consider that the
least element of $\rho$ is $0$ and the greatest element of $\rho$ is
$1$.

Let $\rho'$ and $r$ be the relations defined by: $\rho':=\theta\circ\rho\circ\theta$ and $r:=\rho\cap\theta$.

If $\rho'\neq E_{k}^{2}$, we have $\pol(\theta)\cap\pol(\rho)\varsubsetneq\pol(\rho')\varsubsetneq\pol(\theta)$.

Suppose that $\rho'= E_{k}^{2}$, then $(1,0)\in E_{k}^{2}=\rho'$ and
it follows that $(1,0)\in\theta$ or there is $b\in
E_{k}\setminus\{0\}$ such that $(b,0)\in\theta$. Therefore $r\neq
\Delta_{E_{k}}=\{(a,a):\ \ a\in E_{k}\}$.

Since $r$ and $r^{-1}$ are subset of $\theta$,  $r\circ r^{-1}$ is a subset of $\theta$.

If $r\circ r^{-1}\neq \theta$, then
$\pol(\theta)\cap\pol(\rho)\varsubsetneq\pol(\theta)\cap\pol(r\circ
r^{-1})\varsubsetneq\pol(\theta)$.

If $r\circ r^{-1}= \theta$, then $\pol(r)\varsubsetneq\pol(\theta)$
and it can be proved that each equivalence class of $\theta$
contains a least and a greatest element. It follows by Theorem 3.3
of \cite{TEM-meet-irred}  that $\pol(r)$ is a meet-irreducible
maximal subclone of $\pol(\theta)$. Using the fact that
$\pol(\theta)\cap\pol(\rho)\varsubsetneq \pol(r)$, we conclude that
$\pol(\theta)\cap\pol(\rho)$ is not maximal in $\pol(\theta)$.
\end{proof}

For a prime affine relation, there is no meet-reducible
submaximality in the set of polymorphisms of a nontrivial
equivalence relation. This is proved by the following theorem.
\begin{theorem}\label{maintheo-primeaffine-equiv}
Let $\alpha$ be a prime affine relation and $\theta$ a nontrivial equivalence relation. We have
$$ \pol(\theta)\cap\pol(\alpha)\varsubsetneq\pol(\alpha_{1})\varsubsetneq\pol(\theta), $$ where
$$\alpha_1 = \{(a, b, c, d)\in\alpha:  (a, b),(a,c),(a,d) \in \theta\}.$$
\end{theorem}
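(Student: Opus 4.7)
The plan is to establish the two containments and then show each is strict by constructing a witness operation.

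For $\pol(\theta)\cap\pol(\alpha)\subseteq\pol(\alpha_1)$, take $f$ in the intersection, of arity $n$, and columns $(a_i,b_i,c_i,d_i)\in\alpha_1$; the image $4$-tuple lies in $\alpha$ because $f\in\pol(\alpha)$, and each of its pairs $(f(\bar a),f(\bar b))$, $(f(\bar a),f(\bar c))$, $(f(\bar a),f(\bar d))$ lies in $\theta$ because the corresponding columns of pairs are in $\theta$ and $f\in\pol(\theta)$. For $\pol(\alpha_1)\subseteq\pol(\theta)$, use the observation that $(x,y,x,y)\in\alpha_1$ whenever $(x,y)\in\theta$---the equation $x+y=x+y$ is automatic and the three required pairs $(x,y),(x,x),(x,y)$ are in $\theta$---so feeding any $f\in\pol(\alpha_1)$ columns of this form yields $(f(\bar x),f(\bar y),f(\bar x),f(\bar y))\in\alpha_1$, whence $(f(\bar x),f(\bar y))\in\theta$.

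For strict inclusion $\pol(\theta)\cap\pol(\alpha)\subsetneq\pol(\alpha_1)$, I would exhibit a unary $f\in\pol(\alpha_1)\setminus\pol(\alpha)$. Fix a $\theta$-class $C$ of minimal size with $|C|\ge 2$, a point $c_0\in C$, and set $f(x)=x$ for $x\in C$ and $f(x)=c_0$ otherwise. Then $f\in\pol(\theta)$ (each class is sent into a single class) and $f\in\pol(\alpha_1)$ (on any single $\theta$-class $f$ is either the identity or a constant, both preserving affine equations). But $f\notin\pol(\alpha)$: for distinct $a,c\in C$ and $b,d\notin C$ with $a+b=c+d$, the image $(a,c_0,c,c_0)$ lies in $\alpha$ only when $a=c$. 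Such a pair $(b,d)$ is produced by a pigeonhole count in $(E_k,+)$ as long as $|C|<k/2$, which can be arranged by choice of $C$.

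For strict inclusion $\pol(\alpha_1)\subsetneq\pol(\theta)$, I would exhibit a unary $g\in\pol(\theta)\setminus\pol(\alpha_1)$. Pick a $\theta$-class $C$ with $|C|\ge 2$ and let $g$ permute $C$ suitably and be the identity outside; then $g\in\pol(\theta)$ automatically. To show $g$ can be chosen to break $\alpha_1$, note that a non-identity permutation of a subset of the abelian $p$-group $(E_k,+)$ generically violates some affine identity $a+b=c+d$ among its elements. A concrete witness is a transposition paired with a tuple $(x,z,w,y)\in\alpha\cap C^4$ with $x+z=w+y$ when $p\ge 3$, or a longer cycle when $p=2$.

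The main obstacle is ensuring that the witness $4$-tuples in the two strictness steps exist for \emph{every} nontrivial $\theta$ and every prime affine $\alpha$, regardless of how the partition induced by $\theta$ interacts with the abelian $p$-group structure on $E_k$. This comes down to a short case analysis on the prime $p$ and on the sizes and structure of the $\theta$-classes; borderline configurations (e.g.\ two equal classes of size $k/2$, or a class that happens to be a coset of a subgroup) are handled by alternative choices of $(b,d)$ or by varying the permutation.
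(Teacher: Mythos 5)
Your two containments are correct and coincide with what the paper uses (it leaves $\pol(\theta)\cap\pol(\alpha)\subseteq\pol(\alpha_1)$ as ``easy to see'' and tacitly uses that $(x,y,x,y)\in\alpha_1$ for $(x,y)\in\theta$). The real problem is that both of your strictness steps insist on \emph{unary} witnesses, whereas the paper uses a binary operation for $\pol(\alpha_1)\varsubsetneq\pol(\theta)$ and a ternary one for the other inclusion --- and for the second of your steps this is not a stylistic difference but a fatal one. Every tuple of $\alpha_1$ has all four entries in a single $\theta$-class $D$, so a unary $g\in\pol(\theta)$ can violate $\alpha_1$ only by destroying an identity $x+y=z+w$ with $x,y,z,w\in D$. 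Now take $k=4$, $(E_4,+)\cong\mathbb{Z}_2\times\mathbb{Z}_2$ and $\theta$ with two classes of size $2$: for a two-element class $D=\{u,v\}$ one checks that $\alpha\cap D^4$ consists exactly of the tuples $(x,y,z,w)\in D^4$ whose equality pattern on $\{x,y\}$ matches that on $\{z,w\}$ (for odd $p$, of the tuples with $\{x,y\}=\{z,w\}$ as multisets), and every map from $D$ into a two-element class preserves such tuples. Hence \emph{every} unary operation in $\pol(\theta)$ preserves $\alpha_1$, so no unary witness exists and neither ``a longer cycle'' nor any other permutation can help; the same obstruction appears whenever every $\theta$-class is a Sidon set in $(E_k,+)$. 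The paper's binary $g_1$ (value $a$ on $(a,a),(a,b),(b,a)$ and $b$ elsewhere, for a fixed $(a,b)\in\theta$ with $a\neq b$) escapes this: fed the columns $(a,b,a,b)$ and $(a,b,b,a)$ of $\alpha_1$ it returns $(a,b,a,a)$, which is not in $\alpha$ since $a+b\neq a+a$.

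The first strictness step is a fixable gap rather than a dead end. Your $f$ does lie in $\bigl(\pol(\theta)\cap\pol(\alpha_1)\bigr)\setminus\pol(\alpha)$: the cleanest argument is that a unary operation preserves $\alpha$ iff $x\mapsto f(x)-f(0)$ is an endomorphism of $(E_k,+)$, so all its fibres have equal size, whereas your $f$ has a fibre of size $1$ and a fibre of size $k-|C|+1\geq 2$. But the witness you actually write down (distinct $a,c\in C$ and $b,d\notin C$ with $a+b=c+d$) need not exist: for $k=3$ with classes $\{0,1\},\{2\}$ there is only one element outside $C$, and the condition $|C|<k/2$ cannot always ``be arranged by choice of $C$'', since $C$ must have at least two elements and may be forced to be the unique non-singleton class. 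You would need tuples with three coordinates in $C$, or the fibre-counting argument above. The paper avoids all of this with a ternary operation $g_2$ built from a pair $(a,b)\notin\theta$, which is constant on each tuple of $\alpha_1$ (hence preserves $\alpha_1$ for free) and is then shown not to preserve $\alpha$.
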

\begin{proof}

 Let $(a,b)\in\theta$ such that $a\neq b$, then the binary operation $g_1$ defined on $E_k$ by:
 \begin{displaymath}
  g_1(x,y)=\left\{ \begin{array}{l l}
   a & \textrm{if}\;(x,y)\in\{(a,a);(a,b);(b,a)\}\\
   b & \textrm{otherwise,}
  \end{array} \right.
 \end{displaymath}
 preserves $\theta$ and does not preserve $\alpha_{1}$. Therefore $\pol(\alpha_{1})\varsubsetneq \pol(\theta)$.

 Also it is easy to see that $\pol(\theta)\cap\pol(\alpha)\subseteq\pol(\alpha_{1})$. To continue, let $(a,b)\notin\theta$ and let $g_{2}$ be a ternary operation  on
 $E_{k}$ defined by:
 \begin{displaymath}
  g_2(x,y,z)=\left\{ \begin{array}{l l}
   b & \textrm{if}\;(x,y,z)\theta(a,a,b)\\
   a & \textrm{otherwise.}
  \end{array} \right.
 \end{displaymath}
 $g_2$ preserves $\alpha_{1}$ and does not preserve $\alpha$; therefore $\pol(\theta)\cap\pol(\alpha)\varsubsetneq \pol(\alpha_{1})$.
\end{proof}

\section{Equivalence relations, prime permutation relations}\label{sec4}
Let $k > 1$, $\theta$ a nontrivial equivalence relation on $E_{k}$
with blocks (equivalence classes) $B_0,\ldots,B_{t_{1}-1}$ (where
$2\leq t_1\leq k)$ and $\rho$ a nontrivial equivalence relation
distinct from $\theta$ with blocks $C_0,\ldots,C_{t_{2}-1}$ (where
$2\leq t_2\leq k$). In this section we determine the meet-reducible
clones of the form $\pol(\theta) \cap \pol(\rho)$ maximal in
$\pol(\theta)$ where $\theta$ and $\rho$ are two distinct nontrivial
equivalence relations on $E_{k}$.

We define $\mu : E_{k} \rightarrow E_{t_{1}}$ by $\mu(x) = i$ if $x
\in B_i$ and $\nu: E_{k} \rightarrow E_{t_{2}}$ by $\nu(x) = i$ if
$x \in C_i$. Set $D = \pol(\theta) \cap \pol(\rho)$, $\nabla_{E_{k}}
=E_{k}^{2}$ and $\gamma = \theta \cap \rho$. Clearly $ \gamma $ is
an equivalence relation on $E_{k}$ and $D\subseteq \pol(\theta) \cap
\pol(\gamma).$
\begin{theorem}\label{maintheo-equiv-equiv} Let $ \theta $ and $ \rho $ be two distinct nontrivial equivalence relations on a finite set $E_{k} $.
$ \pol(\theta) \cap  \pol(\rho)$ is submaximal in  $\pol(\theta)$ if and only if $\theta$ and $\rho$ satisfy one of the following statement:
\begin{enumerate}
    \item [(a)] $\theta \varsubsetneq \rho$ or $\rho \varsubsetneq \theta$;
    \item [(b)] $ \rho\cap\theta=\Delta_{E_{k}} $ and $ \rho\circ\theta=\nabla_{E_{k}}. $
\end{enumerate}
\end{theorem}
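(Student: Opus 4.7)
The plan is to treat each direction separately: for necessity, construct an intermediate clone whenever the right-hand side fails, and for sufficiency, use direct generation arguments.

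For the necessity direction, I would assume that neither (a) nor (b) holds and produce a proper intermediate clone between $D$ and $\pol(\theta)$. Since (a) fails, $\theta$ and $\rho$ are incomparable; since (b) fails, at least one of $\gamma \neq \Delta_{E_k}$ or $\rho \circ \theta \neq \nabla_{E_k}$ must hold. In the first case, $\gamma$ is a nontrivial equivalence strictly contained in both $\theta$ and $\rho$, and I would take $C = \pol(\theta) \cap \pol(\gamma)$ as the intermediate clone. The containment $D \subseteq C$ is automatic since $D$ preserves $\theta \cap \rho = \gamma$. Strictness on the right, $C \subsetneq \pol(\theta)$, is witnessed by a unary operation preserving $\theta$ but nontrivially permuting $\gamma$-classes inside some $\theta$-class; strictness on the left, $D \subsetneq C$, is witnessed by a unary operation preserving $\theta$ and $\gamma$ but sending some $\rho$-related pair to elements in distinct $\rho$-classes, using $\gamma \subsetneq \rho$.

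If instead $\gamma = \Delta_{E_k}$ and $\rho \circ \theta \neq \nabla_{E_k}$, I would consider the lattice join $\eta = \theta \vee \rho$. When $\eta \neq \nabla_{E_k}$, $\eta$ is a proper nontrivial equivalence strictly coarser than $\theta$, and $C = \pol(\theta) \cap \pol(\eta)$ is an intermediate clone, with strict inclusions verified by analogous unary operations (using $\theta \subsetneq \eta$ and $\rho \subsetneq \eta$). When $\eta = \nabla_{E_k}$, I would instead work with a derived binary relation such as $\tau = \rho \circ \theta$, which is preserved by $D$ as a composition of preserved relations but is not in the invariants of $\pol(\theta)$ (since two $\theta$-equivalent elements can have distinct $\tau$-slices once $\rho \circ \theta \neq \nabla_{E_k}$). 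Verifying that $\pol(\theta) \cap \pol(\tau)$ strictly contains $D$ is the delicate step and the main technical obstacle of the proof; it may be necessary to pass to a finer derived relation (such as $(\theta \circ \rho) \cap (\rho \circ \theta)$ or a higher-arity combination built from $\theta$ and $\rho$) to secure both strict inclusions simultaneously.

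For the sufficiency direction, in case (a) (WLOG $\rho \subsetneq \theta$), the goal is to show that any $f \in \pol(\theta) \setminus D$ together with $D$ generates $\pol(\theta)$. The cleanest route is to exhibit a near-unanimity term in $\pol(\theta)$ and apply Theorem \ref{baker-pixley}: then membership in $\pol(\theta)$ is controlled by compatibility with subuniverses of $E_k^d$ for some fixed $d$, and one verifies that any such subuniverse not already invariant under $D$ becomes generated once $f$ is added. In case (b), the hypotheses provide a bijection $E_k \to (E_k/\theta) \times (E_k/\rho)$ via $x \mapsto ([x]_\theta, [x]_\rho)$, which is injective by $\gamma = \Delta_{E_k}$ and surjective by $\rho \circ \theta = \nabla_{E_k}$; under this identification $D$ becomes the clone of product operations $g_1 \times g_2$ on the two factor sets, and the maximality of this product clone in $\pol(\theta)$ follows by a standard argument: a function in $\pol(\theta) \setminus D$ must depend nontrivially on some $\theta$-coordinate when computing its $\rho$-part, and this dependence, combined with the product operations and constants in $D$, suffices to generate every operation in $\pol(\theta)$.
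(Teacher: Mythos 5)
Your necessity argument is essentially the paper's: when $\gamma=\theta\cap\rho\neq\Delta_{E_{k}}$ the intermediate clone is $\pol(\theta)\cap\pol(\gamma)$ (Lemma \ref{equivequivL1} and the lemma following it), and when $\gamma=\Delta_{E_{k}}$ but $\rho\circ\theta\neq\nabla_{E_{k}}$ the paper goes straight to $\sigma=(\rho\circ\theta)\cap(\theta\circ\rho)$; your detour through the join $\theta\vee\rho$ is unnecessary, since $\theta\subseteq\sigma$ already makes the witnessing unary operation (sending one point of a $\rho$-pair to $a$ and everything else to $b$ with $a\theta b$, $a\neq b$) preserve $\sigma$ while violating $\rho$. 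Apart from a small imprecision (a map that genuinely \emph{permutes} $\gamma$-classes blockwise would preserve $\gamma$; you need one that splits or merges them inside a $\theta$-class), this half is fine.

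The sufficiency direction has two genuine gaps. First, to apply Theorem \ref{baker-pixley} to $G=\langle D\cup\{f\}\rangle$ you need a near-unanimity term \emph{in} $G$, and the only way to secure one independently of $f$ is to exhibit it in $D=\pol(\theta)\cap\pol(\rho)$; a near-unanimity term merely in $\pol(\theta)$, which is what you propose, gives nothing. Building a majority operation that preserves $\theta$ and $\rho$ simultaneously is a substantive part of the paper's proof (the unnumbered lemma after Lemma \ref{equivequivbin}, with an explicit four-case definition when $\rho\varsubsetneq\theta$ and a product construction in case (b)). Second, Baker--Pixley reduces the claim to showing that every binary relation invariant under $G$ is invariant under $\pol(\theta)$; since $\inva^{(2)}G\subseteq\inva^{(2)}D$ and $f\notin\pol(\rho)$, what is actually required is the classification $\inva^{(2)}D\subseteq\{\Delta_{E_{k}};\rho;\theta;\nabla_{E_{k}}\}$ — the paper's Lemma \ref{equivequivbin}, proved in case (a) by passing to the quotient $E_{k}/\rho$ and invoking the maximality of $\pol(\theta/\rho)$, and in case (b) via the product decomposition. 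Your formulation, that a subuniverse ``not already invariant under $D$ becomes generated once $f$ is added,'' inverts the logic: adding $f$ does not generate new invariant relations, it removes $\rho$ from the list of invariants, and one must know in advance that nothing else was on that list. The same omission recurs in your case (b), where the ``standard argument'' about coordinate dependence is asserted but never supplied.
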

The proof of Theorem \ref{maintheo-equiv-equiv} follows from the
next lemmas giving the sufficiency part and the necessity part of
this theorem.
\begin{remark}
In the condition $(b)$, it follows from $
\rho\circ\theta=\nabla_{E_{k}} $ that  there exist $u_i \in B_i$ $(i
= 0,\ldots, t_1-1)$ such that $(u_p,u_q)\in\rho$ for all $0\leq p,$
$q\leq t_{1}-1$. Also  $ \rho\circ\theta=\nabla_{E_{k}} $ is
equivalent to $ \theta\circ\rho=\nabla_{E_{k}}. $
\end{remark}

\begin{lemma}\label{equivequivbin}
Let $\beta$ be a binary relation on $E_k$, $\rho$ and $\theta$ satisfying the condition $(a)$ or $(b)$ of Theorem \ref{maintheo-equiv-equiv}.
$$\text{ If }\pol(\theta) \cap  \pol(\rho)\subseteq \pol(\beta), \text{ then }  \beta\in\{\Delta_{E_{k}};\rho;\theta;\nabla_{E_{k}}\}.$$
\end{lemma}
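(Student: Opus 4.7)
The plan is to classify the binary relations $\beta$ preserved by $D:=\pol(\theta)\cap\pol(\rho)$ by first computing, for each pair $(a,b)\in E_k^2$, the smallest $D$-invariant binary relation containing $(a,b)$; call it $\langle(a,b)\rangle_D$. I expect this principal invariant relation to equal one of $\Delta_{E_k},\theta,\rho,\nabla_{E_k}$, determined by the $(\theta,\rho)$-type of $(a,b)$. Since $\beta$ is the union of the $\langle(a,b)\rangle_D$ for $(a,b)\in\beta$, only the admissible unions have to be enumerated. Two preliminary observations are useful. First, since $\theta$ and $\rho$ are reflexive, every unary constant belongs to $D$, so $\beta\neq\emptyset$ forces $\Delta_{E_k}\subseteq\beta$. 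Second, because $D$ is a clone, $\langle(a,b)\rangle_D$ coincides with the unary orbit $\{(h(a),h(b))\colon h$ is a unary member of $D\}$, i.e.\ with the orbit of $(a,b)$ under maps preserving both $\theta$ and $\rho$.

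For case (a), where $\theta\subsetneq\rho$, every $\theta$-class is contained in a single $\rho$-class, so any $h$ that is constant on each $\rho$-class automatically belongs to $D$. Using such $h$, the unary orbit of $(a,b)\notin\rho$ already exhausts $\nabla_{E_k}$. If $(a,b)\in\rho\setminus\theta$, then $a,b$ lie in distinct $\theta$-classes inside a common $\rho$-class, and to reach any target $(c,d)\in\rho$ I would pick $h$ constant on each $\theta$-class with all values confined to a single $\rho$-class. If $(a,b)\in\theta\setminus\Delta_{E_k}$, the near-constant $h$ defined by $h(b)=d$ and $h(x)=c$ for $x\neq b$ lies in $D$ whenever $(c,d)\in\theta$. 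Combined with the easy reverse containments from $D\subseteq\pol(\theta)\cap\pol(\rho)$, these show the principal relations are exactly $\nabla_{E_k}$, $\rho$, $\theta$ in the three subcases (and $\Delta_{E_k}$ when $a=b$).

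For case (b), the hypotheses $\rho\cap\theta=\Delta_{E_k}$ and $\rho\circ\theta=\nabla_{E_k}$ force $|B_i\cap C_j|=1$ for all $i,j$, giving a bijection $E_k\cong E_{t_1}\times E_{t_2}$ via $x\mapsto(\mu(x),\nu(x))$. Under this identification, the unary operations in $D$ correspond bijectively to arbitrary pairs of maps $(\tilde h\colon E_{t_1}\to E_{t_1},\;\bar h\colon E_{t_2}\to E_{t_2})$, so unary orbits can be computed coordinatewise. This yields principal relations $\theta$, $\rho$, $\nabla_{E_k}$ for non-diagonal $(a,b)$ lying in $\theta$, in $\rho$, or outside $\rho\cup\theta$ respectively.

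The main obstacle is the mixed sub-case of (b) in which $\beta$ contains simultaneously a pair $(a_1,b_1)\in\rho\setminus\Delta_{E_k}$ and a pair $(a_2,b_2)\in\theta\setminus\Delta_{E_k}$: no unary member of $D$ transports either one outside $\rho\cup\theta$, so a binary operation must be invoked. The construction that works is the mixed projection $g(x,y):=B_{\mu(x)}\cap C_{\nu(y)}$, which preserves $\theta$ and $\rho$ and hence lies in $D$; applied componentwise to $((a_1,b_1),(a_2,b_2))$ it produces a pair whose two coordinates differ in both the $\mu$- and $\nu$-quotient, hence lies outside $\rho\cup\theta$. By the previous paragraph, the principal relation of that pair is $\nabla_{E_k}$, forcing $\beta=\nabla_{E_k}$. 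Assembling the sub-cases yields $\beta\in\{\Delta_{E_k},\theta,\rho,\nabla_{E_k}\}$, as required.
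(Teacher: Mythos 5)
Your proof is correct, and it is more explicit than the paper's in a way worth noting. The paper argues case (a) by first showing $\beta$ is saturated with respect to the smaller equivalence, then passing to the quotient $E_k/\rho$ and citing the known classification of binary relations invariant under the maximal clone $\pol(\theta/\rho)$; for case (b) it records only the product decomposition $E_k\cong E_k/\rho\times E_k/\theta$ and the fact that members of $D$ act coordinatewise, leaving the actual determination of $\beta$ implicit. You instead compute the principal $D$-invariant relation $\langle(a,b)\rangle_D$ of each pair directly as a unary orbit (legitimate, since composing an $n$-ary $f\in D$ with unary orbit members again gives a unary member of $D$), which makes case (a) elementary — the four orbits $\Delta_{E_k}\subseteq\theta\subseteq\rho\subseteq\nabla_{E_k}$ are nested, so unions cause no trouble — and, more importantly, it supplies the step the paper omits in case (b): ruling out relations such as $\rho\cup\theta$ that are unions of orbits but not invariant. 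Your mixed projection $g(x,y)=B_{\mu(x)}\cap C_{\nu(y)}$ is exactly the right operation there, since applied to a pair in $\rho\setminus\Delta_{E_k}$ and a pair in $\theta\setminus\Delta_{E_k}$ it manufactures a pair outside $\rho\cup\theta$, whose orbit is all of $\nabla_{E_k}$. Two cosmetic points: condition (a) also allows $\rho\varsubsetneq\theta$, which you should dispatch with one sentence by the symmetry of the hypothesis and conclusion in $\rho$ and $\theta$ (the paper commits the mirror-image omission); and strictly speaking $\beta=\emptyset$ should be admitted alongside the four listed relations (it is invariant and harmless for the intended Baker--Pixley application), a gap present in the paper's statement as well.
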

\begin{proof}\text{ }

\begin{itemize}
    \item Assume that $\rho$ and $\theta$ satisfy condition $(a)$.
    Moreover, suppose that $\rho \varsubsetneq \theta$, $\pol(\theta) \cap  \pol(\rho)\subseteq \pol(\beta)$ and $ \beta\neq\Delta_{E_{k}} $.
     For any distinct $a_1, a_2 \in E_k$ and for any $b_i \in [a_i]_{\rho}$ ($i = 1, 2$), the function $f: E_k \rightarrow E_k$
defined by $f(a_i) = b_i, (i = 1 , 2)$ and $f(x) = x$ for all $x \in
E_k\setminus \{a_1; a_2\}$ satisfies $f \in \pol(\theta) \cap
\pol(\rho)$. Since $\pol(\theta) \cap  \pol(\rho)\subseteq
\pol(\beta)$,  $f \in \pol(\beta).$ It follows that
$[a_1]_{\rho}\times[a_2]_{\rho}\subseteq \beta $ whenever
$(a_{1},a_{2})\in\beta$. In the following we denote by $\beta/\rho $
the relation on $E_{k}/\rho:=\{[a]_{\rho}| a\in E_{k}\}$ defined by:
$([a_1]_{\rho},[a_2]_{\rho})\in\beta/\rho\Leftrightarrow
(a_1,a_2)\in\beta.$ Similarly we define $\rho/\rho $ and
$\theta/\rho $ and we have $\pol(\theta/\rho) \cap
\pol(\rho/\rho)\subseteq \pol(\beta/\rho)$. Since
$\rho/\rho=\Delta_{E_{k}/\rho} $, $\pol(\theta/\rho) \cap
\pol(\rho/\rho)=\pol(\theta/\rho)$ is the maximal clone on
$E_{k}/\rho$ determined by the nontrivial equivalence relation
$\theta/\rho$ and it follows that
$\beta/\rho\in\{\rho/\rho=\Delta_{E_{k}/\rho};\theta/\rho;\nabla_{E_{k}/\rho}=\nabla_{E_{k}}/\rho\}.$
Hence $\beta\in\{\rho;\theta;\nabla_{E_{k}}\}$.
   \item Assume that $\rho$ and $\theta$ satisfy condition $(b)$.
In this case the function $\varphi: E_{k}\rightarrow
E_{k}/\rho\times E_{k}/\theta $, defined by $a\mapsto
([a]_{\rho},[a]_{\theta}) $ is a bijection. This bijection gives a
decomposition of $E_{k}$ into a cartesian product of $E_{k}/\rho$
and $E_{k}/\theta $ and one deduces that the operations in $
\pol(\theta) \cap  \pol(\rho)$ correspond to the operations that act
coordinatewise on $E_{k}/\rho\times E_{k}/\theta.$
\end{itemize}
\end{proof}

\begin{lemma}
Let $\beta$ be a binary relation on $E_k$. If $\rho$ and $\theta$
satisfy condition $(a)$ or $(b)$ in Theorem
\ref{maintheo-equiv-equiv}, then $\pol(\theta) \cap  \pol(\rho)$
contains a majority operation.
\end{lemma}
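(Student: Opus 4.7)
The plan is to exhibit, in each of the two cases (a) and (b) of Theorem~\ref{maintheo-equiv-equiv}, an explicit ternary majority operation lying in $\pol(\theta)\cap\pol(\rho)$. (The binary relation $\beta$ named in the statement plays no role in the conclusion and is evidently a leftover from the preceding lemma.) In both cases I would rely on medians with respect to a carefully chosen order.

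Case (a): assume without loss of generality that $\rho\subseteq\theta$. I would build a linear order $\leq$ on $E_k$ in which every $\theta$-class and every $\rho$-class is an interval. This can be done by a nested lexicographic procedure: first fix an arbitrary linear order on the quotient $E_k/\theta$; next, inside each $\theta$-class, fix a linear order on the $\rho$-classes it contains (this step uses $\rho\subseteq\theta$); and finally fix an arbitrary linear order inside each $\rho$-class. Concatenation yields a linear order on $E_k$ with the desired interval property. Define $m(x,y,z)$ to be the $\leq$-median of $\{x,y,z\}$; this is automatically a ternary majority operation. The verification that $m$ preserves $\theta$ rests on the following observation: because each $\theta$-class is an interval of $\leq$, the $\theta$-class of $m(z_1,z_2,z_3)$ equals the median of the triple $\bigl([z_1]_\theta,[z_2]_\theta,[z_3]_\theta\bigr)$ with respect to the order induced on $E_k/\theta$, hence depends only on that triple of classes. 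Consequently $x_i\,\theta\,y_i$ for $i=1,2,3$ forces $m(x_1,x_2,x_3)\,\theta\,m(y_1,y_2,y_3)$, so $m\in\pol(\theta)$. The very same argument, applied to the $\rho$-classes (which are also intervals by construction), shows $m\in\pol(\rho)$.

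Case (b): the remark following Theorem~\ref{maintheo-equiv-equiv} provides a bijection $\varphi\colon E_k\to E_k/\rho\times E_k/\theta$, $\varphi(a)=([a]_\rho,[a]_\theta)$. I would fix any majority operations $m_1$ on $E_k/\rho$ and $m_2$ on $E_k/\theta$ (for instance, medians with respect to arbitrary linear orders on the two quotients) and define
\[
m(x,y,z)=\varphi^{-1}\bigl(m_1([x]_\rho,[y]_\rho,[z]_\rho),\,m_2([x]_\theta,[y]_\theta,[z]_\theta)\bigr).
\]
Because $m_1$ and $m_2$ satisfy the three majority identities coordinatewise, so does $m$; and since the $\rho$-coordinate (resp.\ $\theta$-coordinate) of $\varphi(m(x,y,z))$ depends only on the $\rho$-coordinates (resp.\ $\theta$-coordinates) of the inputs, the operation $m$ automatically preserves both $\rho$ and $\theta$.

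The main (and essentially only) obstacle is the key observation used in case (a), namely that if every $\theta$-class is an interval of $\leq$ then the $\leq$-median respects $\theta$. I expect this to be dispatched by a short case distinction on the multiset of $\theta$-classes of the three inputs (all equal, exactly two equal, all distinct); in each case, the fact that a $\theta$-class is a $\leq$-interval forces the middle value in the sorted triple to lie in the ``middle'' $\theta$-class. Case (b) is then essentially immediate from the product decomposition furnished by the remark.
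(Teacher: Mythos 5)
Your proof is correct, and Case (b) coincides with the paper's argument (the coordinatewise majority operation on the product decomposition $E_k\cong E_k/\rho\times E_k/\theta$ furnished by the remark/the proof of the preceding lemma). Where you genuinely diverge is Case (a): the paper writes down an explicit ternary operation by a four-way case split (return $x_i$ when exactly two arguments are $\rho$-equivalent, return $x_1$ when all three are, return a fixed representative $v_{[x_l]_\theta}$ when no two are $\rho$-equivalent but two are $\theta$-equivalent, and return $0$ otherwise) and then verifies preservation of $\rho$ and of $\theta$ by a fairly long enumeration of subcases. You instead build a linear order refining the chain $\rho\subseteq\theta$ so that every $\rho$-class and every $\theta$-class is an interval, and take the median. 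Your key observation --- that the median with respect to an order whose $\theta$-classes are intervals induces the median on the quotient, hence preserves $\theta$ --- is easily checked by the three-case distinction you indicate, and it applies verbatim to $\rho$; so the argument is sound. What the median route buys is brevity and reusability: preservation of \emph{both} relations follows from a single lemma about interval partitions, and the same construction works for any chain of equivalence relations. What the paper's route buys is an entirely self-contained explicit formula in the spirit of the other operations constructed throughout the paper, with no auxiliary order to set up. Your parenthetical that the hypothesis on $\beta$ is vestigial is also accurate: $\beta$ plays no role in either proof.
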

\begin{proof}\text{ }

\begin{itemize}
    \item Assume that $\rho$ and $\theta$ satisfy condition $(a)$. In addition, assume that $\rho \varsubsetneq \theta$. In each set $B_{i}$, we fix an
    element $v_{B_{i}}$ $(i=0,\ldots,t_{1}-1).$
    We consider the majority operation $m$ defined on $E_{k}$ by:

$$ m(x_1,x_2,x_3)=  \left\{\begin{array}{ll}
                                                 x_{i}& \text{ if }  (x_i,x_j)\in\rho \text{ and } x_l\notin[x_i]_{\rho},  \\
                                                                 & \qquad \text{ for some } 1\leq i<j\leq 3, \text{ such that } l\notin\{i;j\},\\
                                                  x_1& \text{ if } \{x_1;x_2\}\subseteq [x_3]_{\rho},\\
                                                v_{[x_{l}]_{\theta}}& \text{ if }  x_i\notin [x_j]_{\rho} \text{ for } 1\leq i<j\leq 3 \text{ and }\\
                                                                    &\qquad              (x_l,x_m)\in\theta \text{ for some } 1\leq l<m\leq 3,\\
                                                 0 & \text{ otherwise.}
                                               \end{array}
      \right.$$
Let us show that $m\in \pol(\rho)\cap\pol(\theta) $.

Let $(a_i,b_i)\in\rho, 1\leq i\leq 3 $

\begin{itemize}
    \item if $\{a_1;a_2\}\subseteq [a_3]_{\rho}$ then $\{b_1;b_2\}\subseteq [b_3]_{\rho}$ and $m(a_1,a_2,a_3)=a_1\rho b_1=m(b_1,b_2,b_3)$
    \item otherwise, if there exist $1\leq i<j\leq 3$ such that $(a_i,a_j)\in\rho$, $l\notin\{i;j\}$ and $a_l\notin[a_i]_{\rho}$,  then
    $(b_i,b_j)\in\rho$, $l\notin\{i;j\}$ and $b_l\notin[b_i]_{\rho}$, hence $m(a_1,a_2,a_3)=a_i\rho b_i=m(b_1,b_2,b_3)$
    \item otherwise, if there exist $1\leq l<m\leq 3$ such that $(a_l,a_m)\in\theta$, then with transitivity of $\theta$ and the fact that $\rho\varsubsetneq \theta
    $ we have $(b_l,b_m)\in\theta$ and $m(a_1,a_2,a_3)=v_{[a_{l}]_{\theta}}=v_{[b_{l}]_{\theta}}=m(b_1,b_2,b_3)$
    \item otherwise, we have $m(a_1,a_2,a_3)=0=m(b_1,b_2,b_3)$.
\end{itemize}
Therefore $m\in \pol(\rho).$

Let $(a_i,b_i)\in\theta, 1\leq i\leq 3 $

\begin{itemize}
    \item if $\{a_1;a_2\}\subseteq [a_3]_{\rho}$ then with transitivity of $\theta$ and the fact that $\rho\varsubsetneq \theta
    $, we have $\{a_1;a_2\}\subseteq [a_3]_{\theta}$ and $\{b_1;b_2\}\subseteq [b_3]_{\theta}$; therefore
    $$(m(a_1,a_2,a_3), m(b_1,b_2,b_3))\subseteq \{a_i;b_i;v_{[a_{i}]_{\theta}};v_{[b_{l}]_{\theta}} \}^{2}\subseteq \theta $$
    \item otherwise, if there exist $1\leq i<j\leq 3$ such that $(a_i,a_j)\in\rho$, $l\notin\{i;j\}$ and $a_l\notin[a_i]_{\rho}$,  then
    $(b_i,b_j)\in\theta$; it follows that $m(b_1,b_2,b_3)\neq 0$ and $m(b_1,b_2,b_3)\in [m(a_1,a_2,a_3)]_{\theta}$. Since $l\notin\{i;j\}$ and $b_l\notin[b_i]_{\rho}$,
     $$m(a_1,a_2,a_3)=a_i\rho b_i=m(b_1,b_2,b_3)$$
    \item otherwise, if there exist $1\leq l<m\leq 3$ such that $(a_l,a_m)\in\theta$, then with transitivity of $\theta$, we have $(b_l,b_m)\in\theta$, and
    $$m(a_1,a_2,a_3)=v_{[a_{l}]_{\theta}}=v_{[b_{l}]_{\theta}}=m(b_1,b_2,b_3)$$
    \item otherwise, we have $m(a_1,a_2,a_3)=0=m(b_1,b_2,b_3)$.
\end{itemize}
Therefore $m\in \pol(\theta)$
   \item Assume that $\rho$ and $\theta$ satisfy condition $(b)$.
With the decomposition of $E_{k}$ into a cartesian product of
$E_{k}/\rho$ and $E_{k}/\theta $, we can say that, if $m_1$ is a
majority operation on $E_{k}/\rho$ and $m_2$ is a majority operation
on $E_{k}/\theta$, then the operation $m$ on $E_{k}/\rho\times
E_{k}/\theta $ that acts like $m_i$ in the $i$th coordinate($i=1,2$)
is a majority operation on $E_{k}/\rho\times E_{k}/\theta $ that
preserves $\rho$ and $\theta$.
\end{itemize}
\end{proof}

The two previous lemmas together with Theorem \ref{baker-pixley}
prove the sufficiency part of Theorem \ref{maintheo-equiv-equiv}.

Our Next step is to prove the necessity part of  Theorem
\ref{maintheo-equiv-equiv}. It is done in the following three
Lemmas.

\begin{lemma}\label{equivequivL1} If $\Delta_{E_{k}} \varsubsetneq \gamma \varsubsetneq \theta$ and
 $\gamma\varsubsetneq \rho$, then $D \varsubsetneq  \pol(\theta) \cap \pol(\gamma)
 \varsubsetneq  \pol(\theta).$
\end{lemma}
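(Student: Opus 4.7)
The plan is to prove the two strict inclusions by exhibiting an explicit witness operation for each. The trivial containment $D \subseteq \pol(\theta) \cap \pol(\gamma)$ is immediate from $\gamma = \theta \cap \rho$, since any operation preserving both $\theta$ and $\rho$ preserves their intersection.

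For the strict inclusion $D \varsubsetneq \pol(\theta) \cap \pol(\gamma)$, I would exhibit an operation in $\pol(\theta) \cap \pol(\gamma)$ that fails to preserve $\rho$. The hypothesis $\gamma \varsubsetneq \theta$ yields a $\theta$-class $B^*$ containing at least two $\gamma$-classes, hence two points $p_1, p_2 \in B^*$ with $(p_1,p_2) \in \theta \setminus \gamma$; because $p_1\,\theta\,p_2$, this is equivalent to $(p_1,p_2) \notin \rho$. The hypothesis $\gamma \varsubsetneq \rho$ yields a pair $(u,v) \in \rho \setminus \theta$, so $u$ and $v$ lie in distinct $\theta$-classes. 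I would then define $f$ to be constant on every $\gamma$-class, sending $[u]_\gamma$ to $p_1$, $[v]_\gamma$ to $p_2$, and sending every other $\gamma$-class to an element of a chosen target $\theta$-class for its ambient $\theta$-class (with the target for the $\theta$-classes of $u$ and $v$ chosen to be $B^*$). Being constant on $\gamma$-classes gives $f \in \pol(\gamma)$; mapping each $\theta$-class into a single $\theta$-class gives $f \in \pol(\theta)$; and $(u,v) \in \rho$ sent to $(p_1,p_2) \notin \rho$ gives $f \notin \pol(\rho)$.

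For the strict inclusion $\pol(\theta) \cap \pol(\gamma) \varsubsetneq \pol(\theta)$, I would exhibit an operation in $\pol(\theta) \setminus \pol(\gamma)$. The hypothesis $\Delta_{E_k} \varsubsetneq \gamma$ produces a non-singleton $\gamma$-class $D$ with $d_1 \neq d_2$ in $D$, and $\gamma \varsubsetneq \theta$ produces a $\theta$-class $B'$ containing distinct $\gamma$-classes $D_1, D_2$; pick $a \in D_1$ and $b \in D_2$. Letting $B_D$ denote the $\theta$-class of $D$, I would define $f$ to send all of $B_D$ into $B'$ by $f(d_1) = a$, $f(d_2) = b$, and $f(x) = a$ for the remaining $x \in B_D$, and to act as the identity off $B_D$. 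Then $f$ maps each $\theta$-class into a single $\theta$-class, so $f \in \pol(\theta)$; but $(d_1,d_2) \in \gamma$ with $(f(d_1), f(d_2)) = (a,b) \in \theta \setminus \gamma$, so $f \notin \pol(\gamma)$.

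The main obstacle is guaranteeing that the required combinatorial ingredients exist in every configuration allowed by the hypotheses. In the second construction, the sharp corner case is when the chosen non-singleton $\gamma$-class $D$ coincides with its whole $\theta$-class $B_D$: here $B'$ must be picked distinct from $B_D$, and this is exactly where $\gamma \varsubsetneq \theta$ is used to produce a separate $\theta$-class carrying at least two $\gamma$-classes. A parallel but milder check is needed in the first construction to ensure the target assignment remains consistent when $B^*$ coincides with the $\theta$-class of $u$ or $v$; in that situation one sets the target of that $\theta$-class equal to itself, which is automatically compatible.
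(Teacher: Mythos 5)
Your proposal is correct and follows essentially the same route as the paper: the heart of the argument in both cases is a unary operation, constant on blocks, that sends a pair of $\rho\setminus\theta$ to a pair of $\theta\setminus\rho$, witnessing $D \neq \pol(\theta)\cap\pol(\gamma)$. The only difference is that for $\pol(\theta)\cap\pol(\gamma)\varsubsetneq\pol(\theta)$ you build an explicit witness (correctly, including the corner case where the non-singleton $\gamma$-class fills its $\theta$-class), whereas the paper simply invokes that $\gamma$ is a nontrivial equivalence relation distinct from $\theta$, so that the two maximal clones $\pol(\gamma)$ and $\pol(\theta)$ are incomparable.
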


\begin{proof}
$ \gamma$ is a nontrivial equivalence relation on $E_{k}$ distinct from $\theta$ and $\rho$. Thus $D \subseteq
 \pol(\theta) \cap \pol(\gamma) \varsubsetneq  \pol(\theta)$. Let us prove that $D \neq  \pol(\theta) \cap \pol(\gamma)$;
 let $(a, b) \in \theta \setminus \gamma$
 and $(c, d) \in \rho\setminus\gamma.$ Then
$(a, b) \notin \rho$ and $(c, d) \notin \theta$. Define $f \in \mathcal{O}^{1}(E_{k})$ by:
$$f(x)=  \left\{\begin{array}{ll}
                                                 a &\text{ if } x\in B_{\mu(c)} \\
                                                 b &\text{ otherwise.}
                                               \end{array}
      \right.$$
Since $a\theta b$, $f \in \pol(\theta)$. In addition
 $\gamma\subseteq \theta$ and $f$ is constant on each block of $\theta$, hence
$f \in \pol(\gamma).$ Therefore $f \in  \pol(\theta) \cap \pol(\gamma)$
 while $f \notin D$ since $c\rho d$ and $(f(c), f(d)) = (a, b) \notin \rho.$
\end{proof}
\begin{lemma} Let $\rho$ and $\theta$ be two nontrivial equivalence relations which are incomparable.\\
If $\rho\cap\theta\neq\Delta_{E_{k}}$, then $\pol(\theta) \cap
\pol(\rho)\varsubsetneq \pol(\theta) \cap
\pol(\rho\cap\theta)\varsubsetneq\pol(\theta)$ and
 $\pol(\theta) \cap \pol(\rho\cap\theta)$ is maximal in $\pol(\theta)$.
\end{lemma}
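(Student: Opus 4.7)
The plan is to recognize that the conclusion is essentially bookkeeping: it just combines the preceding Lemma \ref{equivequivL1} with the sufficiency direction of Theorem \ref{maintheo-equiv-equiv}, applied to the equivalence relation $\gamma:=\rho\cap\theta$.

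First I would check that $\gamma$ is a nontrivial equivalence relation that is strictly between $\Delta_{E_k}$ and $\theta$, and also strictly below $\rho$. Being an intersection of two equivalence relations, $\gamma$ is itself an equivalence relation. By hypothesis $\gamma\neq\Delta_{E_k}$. The incomparability of $\rho$ and $\theta$ gives $\rho\not\subseteq\theta$, hence $\gamma\subsetneq\rho$, and symmetrically $\theta\not\subseteq\rho$ gives $\gamma\subsetneq\theta$. In particular $\gamma$ is distinct from both $\rho$ and $\theta$ and is properly contained in $E_k^2$, so it is a nontrivial equivalence relation different from $\theta$.

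With this verification in hand, the hypotheses of Lemma \ref{equivequivL1} — namely $\Delta_{E_k}\subsetneq\gamma\subsetneq\theta$ together with $\gamma\subsetneq\rho$ — are exactly what we have just observed, and the lemma immediately delivers the strict chain $\pol(\theta)\cap\pol(\rho)\varsubsetneq\pol(\theta)\cap\pol(\gamma)\varsubsetneq\pol(\theta)$. For the maximality claim I would then invoke the sufficiency direction of Theorem \ref{maintheo-equiv-equiv}, applied to the pair of distinct nontrivial equivalence relations $(\theta,\gamma)$: since $\gamma\subsetneq\theta$, condition~(a) of that theorem holds, and its sufficiency — already established above via the preceding two lemmas and the Baker--Pixley Theorem \ref{baker-pixley} — yields that $\pol(\theta)\cap\pol(\gamma)$ is maximal in $\pol(\theta)$.

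There is no substantive obstacle; the statement is a direct corollary of results already in place. The only care needed is the routine verification that $\gamma=\rho\cap\theta$ inherits from the hypotheses (incomparability of $\rho,\theta$ plus $\rho\cap\theta\neq\Delta_{E_k}$) precisely the data required to trigger both Lemma \ref{equivequivL1} and condition~(a) of Theorem \ref{maintheo-equiv-equiv}.
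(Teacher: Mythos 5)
Your proposal is correct and follows essentially the same route as the paper: invoke Lemma \ref{equivequivL1} for the strict chain and then the sufficiency direction of Theorem \ref{maintheo-equiv-equiv} (condition (a), since $\Delta_{E_k}\neq\rho\cap\theta\varsubsetneq\theta$) for maximality. Your explicit verification that incomparability forces $\gamma=\rho\cap\theta$ to be strictly below both $\rho$ and $\theta$ is exactly the bookkeeping the paper leaves implicit.
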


\begin{proof}
It follows from the assumptions and Lemma \ref{equivequivL1} that $\pol(\theta) \cap \pol(\rho)\varsubsetneq \pol(\theta) \cap
\pol(\rho\cap\theta)\varsubsetneq\pol(\theta)$. As $\Delta_{E_{k}}\neq\rho\cap\theta\varsubsetneq\theta\neq \nabla_{E_{k}} $, the sufficiency part yields
$\pol(\theta) \cap \pol(\rho\cap\theta)$ is maximal in $\pol(\theta)$.
\end{proof}

\begin{lemma} Let $\rho$ and $\theta$ be two nontrivial equivalence relations which are incomparable.\\
If $\rho\cap\theta=\Delta_{E_{k}}$ and $\rho\circ\theta \neq
\nabla_{E_{k}}$, then for $
\sigma=\rho\circ\theta\cap\theta\circ\rho $, we have  $\pol(\theta)
\cap \pol(\rho)\varsubsetneq \pol(\theta) \cap
\pol(\sigma)\varsubsetneq\pol(\theta)$.
\end{lemma}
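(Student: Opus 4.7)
The plan is to establish the chain
$$\pol(\theta)\cap\pol(\rho)\;\subseteq\;\pol(\theta)\cap\pol(\sigma)\;\subseteq\;\pol(\theta)$$
and then to strictify each inclusion by producing an explicit unary witness. The non-strict left inclusion is immediate because $\sigma$ is primitive-positive definable from $\theta$ and $\rho$ (an intersection of two compositions), so every operation preserving both $\theta$ and $\rho$ automatically preserves $\sigma$. Before constructing the witnesses, I would record three elementary facts about $\sigma$: it contains $\rho\cup\theta$ (take the second factor in each composition to be a diagonal step); it is reflexive and symmetric, using $(\rho\circ\theta)^{-1}=\theta\circ\rho$; and it is distinct from $\nabla_{E_{k}}$ by the hypothesis $\rho\circ\theta\ne\nabla_{E_{k}}$. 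I would also note that $\rho\cap\theta=\Delta_{E_{k}}$ forces every $(c,d)\in\rho\setminus\Delta_{E_{k}}$ to lie in two distinct $\theta$-blocks.

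For the outer strict inclusion $\pol(\theta)\cap\pol(\sigma)\varsubsetneq\pol(\theta)$, I would pick any pair $(\alpha,\beta)\in\nabla_{E_{k}}\setminus\sigma$ (available since $\sigma\ne\nabla_{E_{k}}$), together with a pair $(c,d)\in\rho\setminus\Delta_{E_{k}}\subseteq\sigma$, and define a unary operation $g$ that is constant on each $\theta$-block with $g([c]_{\theta})=\{\alpha\}$, $g([d]_{\theta})=\{\beta\}$, and arbitrary constant values on the remaining blocks. Such a $g$ preserves $\theta$ automatically, while $(g(c),g(d))=(\alpha,\beta)\notin\sigma$ shows $g\notin\pol(\sigma)$.

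For the inner strict inclusion $\pol(\theta)\cap\pol(\rho)\varsubsetneq\pol(\theta)\cap\pol(\sigma)$, I would fix $(a_{0},b_{0})\in\theta\setminus\Delta_{E_{k}}$ (exists since $\theta$ is nontrivial); automatically $(a_{0},b_{0})\in\sigma$ and $(a_{0},b_{0})\notin\rho$ (else $(a_{0},b_{0})\in\rho\cap\theta=\Delta_{E_{k}}$, contradicting $a_{0}\ne b_{0}$). With $(c,d)\in\rho\setminus\Delta_{E_{k}}$ as above, I would define
$$f(y)=\begin{cases} b_{0} & \text{if }y\in[d]_{\theta},\\ a_{0} & \text{otherwise.}\end{cases}$$
Block-constantness gives $f\in\pol(\theta)$. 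The image of $f$ sits inside $\{a_{0},b_{0}\}$, and reflexivity and symmetry of $\sigma$ together with $(a_{0},b_{0})\in\sigma$ force every possible output pair $(f(x),f(y))$ into $\sigma$, so $f\in\pol(\sigma)$. Finally $(f(c),f(d))=(a_{0},b_{0})\notin\rho$ witnesses $f\notin\pol(\rho)$.

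The main point to get right is the inner strict inclusion: the witness must preserve $\sigma$ globally, not merely on the chosen pair $(c,d)$. The trick is to collapse the image of $f$ onto a single $\sigma$-edge $\{a_{0},b_{0}\}$, so that every output pair is automatically in $\sigma$; the only nontrivial verification needed then, $(a_{0},b_{0})\in\sigma$, comes for free from $\theta\subseteq\sigma$.
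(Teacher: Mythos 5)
Your proof is correct and follows essentially the same route as the paper: the non-strict inclusion $\pol(\theta)\cap\pol(\rho)\subseteq\pol(\theta)\cap\pol(\sigma)$ comes from the definition of $\sigma$, and the inner strict inclusion is witnessed by a unary operation whose image collapses onto a single $\theta$-edge $\{a_{0},b_{0}\}$, exactly as in the paper's map $g$. The only difference is that you also construct an explicit witness for $\pol(\theta)\cap\pol(\sigma)\varsubsetneq\pol(\theta)$, which the paper merely asserts from $\sigma\neq\nabla_{E_{k}}$.
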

\begin{proof}
By assumptions we have $\sigma\neq \nabla_{E_{k}}$, hence
$\pol(\theta) \cap \pol(\sigma)\varsubsetneq\pol(\theta).$ From  the
definition of $\sigma$, we get $\pol(\theta) \cap
\pol(\rho)\subseteq \pol(\theta) \cap \pol(\sigma)$. Let us prove
that $\pol(\theta) \cap \pol(\rho)\varsubsetneq \pol(\theta) \cap
\pol(\sigma)$; choose $a\theta b$ with $a \neq b$ and $u\rho v$ with
$u \neq v$ and define the following unary operation $g$ on $E_{k}$
by:
$$g(x)=  \left\{\begin{array}{ll}
                                                 a &\text{ if } x=u \\
                                                 b &\text{ otherwise.}
                                               \end{array}
      \right.$$

As $(a,b)\in\theta$, we have $g\in \pol(\theta)$ and $g\in
\pol(\sigma)$; $(\theta\subseteq \sigma).$ Since
$(g(u),g(v))=(a,b)$, $(a,b)\in\theta\setminus\Delta_{E_{k}}$ and
$\rho\cap\theta=\Delta_{E_{k}}$, $g\notin \pol(\rho)$. Hence
$$g\notin\pol(\theta) \cap \pol(\rho)\text{ while }g\in\pol(\theta)
\cap \pol(\sigma).$$
\end{proof}
The two previous lemmas proved that if $\pol(\theta) \cap
\pol(\rho)$ is a submaximal clone of $\pol(\theta)$ and $\rho$ and
$\theta$ are incomparable, then $\rho\cap\theta=\Delta_{E_{k}}$ and
$\rho\circ\theta=\nabla_{E_{k}}$, so the condition $(b)$ holds.
\begin{proof}\textit{of Theorem} \ref{maintheo-equiv-equiv}
It follows from the previous lemmas.
\end{proof}
We conclude this section with the following theorem due to Lau and
Rosenberg, and characterizing the case of prime permutation
relations.
\begin{theorem}[\cite{GRE}]\label{maintheo-primepermut-equiv}
If $ \theta $  is a nontrivial equivalence relation and $ \rho $ is a graph of prime permutation $\pi$, on a finite set $E_{k} $, then $ \pol(\theta) \cap
\pol(\rho)$ is submaximal in $\pol(\theta)$ if and only if $\theta$ and $\rho$ satisfy one of the following statements:
\begin{enumerate}
    \item [(a)] $\rho \varsubsetneq \theta$
    \item [(b)] The image of an equivalence class of $\theta$ is include in another class of $\theta$ surjectively.
\end{enumerate}
\end{theorem}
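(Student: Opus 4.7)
I would prove Theorem \ref{maintheo-primepermut-equiv} along the same lines as Theorem \ref{maintheo-equiv-equiv}: sufficiency via a majority operation combined with Theorem \ref{baker-pixley}, and necessity via the construction of an explicit intermediate clone. The first step is to observe that conditions $(a)$ and $(b)$ together amount to saying that the prime permutation $\pi$ preserves $\theta$, i.e.\ $a\theta b$ implies $\pi(a)\theta\pi(b)$: case $(a)$ (that is $\rho\subsetneq\theta$) is equivalent to $\pi$ stabilising every $\theta$-block setwise, while case $(b)$ corresponds to $\pi$ inducing a permutation $\bar\pi$ on $E_k/\theta$ whose cycles, by primality of $\pi$, all have length $1$ or $p$ (the common cycle length of $\pi$).

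\textbf{Sufficiency.} Assuming $(a)$ or $(b)$, I would first exhibit a majority operation $m\in\pol(\theta)\cap\pol(\rho)$. By picking a system of representatives $\{v_B : B\in E_k/\theta\}$ compatible with $\pi$ (i.e.\ $\pi(v_B)=v_{\pi(B)}$, which is available because $\pi$ permutes the blocks of $\theta$), one can adapt the majority operation built in the sufficiency half of Theorem \ref{maintheo-equiv-equiv} and verify directly that it commutes with $\pi$ and preserves $\theta$. With this majority term in hand, Theorem \ref{baker-pixley} reduces maximality of $\pol(\theta)\cap\pol(\rho)$ in $\pol(\theta)$ to a classification of the binary relations $\beta\subseteq E_k^2$ preserved by $\pol(\theta)\cap\pol(\rho)$. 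Mimicking Lemma \ref{equivequivbin}, I would pass to the quotient $E_k/\theta$ and analyse $\beta/\theta$ under the action of $\bar\pi$, using the primality of $p$ to constrain the possible shapes of $\beta/\theta$ and conclude that $\beta\in\{\Delta_{E_k},\rho,\theta,\nabla_{E_k}\}$.

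\textbf{Necessity.} Suppose neither $(a)$ nor $(b)$ holds, so that there exists a pair $(a,b)\in\theta$ with $(\pi(a),\pi(b))\notin\theta$. Define $\theta' := \{(x,y)\in E_k^2 : (\pi(x),\pi(y))\in\theta\}$; this is an equivalence relation distinct from $\theta$, and $\theta\cap\theta'\varsubsetneq\theta$. Since every $f\in\pol(\rho)$ commutes with $\pi$, a direct verification gives $\pol(\theta)\cap\pol(\rho)\subseteq\pol(\theta\cap\theta')$, whence
$$\pol(\theta)\cap\pol(\rho)\subseteq\pol(\theta)\cap\pol(\theta\cap\theta')\varsubsetneq\pol(\theta),$$
the strict right-hand inclusion following because $\theta\cap\theta'$ is a nontrivial equivalence relation different from $\theta$. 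To close, I would exhibit a unary $g\in\pol(\theta)\cap\pol(\theta\cap\theta')$ that fails to commute with $\pi$, constructed by prescribing values on $[a]_{\theta\cap\theta'}$ and $[b]_{\theta\cap\theta'}$ so as to break commutation at the witness pair $(a,b)$.

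\textbf{Main obstacle.} The delicate point is the classification of $\pol(\theta)\cap\pol(\rho)$-invariant binary relations in the sufficiency argument, where one must rule out intermediate relations of the form $\pi^i(\theta)$ or $\theta\cap\pi^i(\theta)$ for $0<i<p$. Here the primality of the common cycle length of $\pi$ is essential: any nonzero power of $\pi$ generates the same cyclic subgroup $\langle\pi\rangle$, so under conditions $(a)$ or $(b)$ these shifted relations either coincide with $\theta$ itself or already fail to lie in $\pol(\theta)$.
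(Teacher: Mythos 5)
First, a remark on scope: the paper does not prove Theorem \ref{maintheo-primepermut-equiv} at all; it is imported from \cite{GRE} and used as a black box, so there is no in-paper argument to measure yours against. Judged on its own, your sketch has two concrete gaps. In the necessity part you put $\theta'=\pi^{-1}(\theta)$ and offer $\pol(\theta)\cap\pol(\theta\cap\theta')$ as the intermediate clone, asserting that $\theta\cap\theta'$ is a nontrivial equivalence relation. It need not be: for $E_4$, $\pi=(0\,1)(2\,3)$ and $\theta$ with classes $\{0,2\},\{1\},\{3\}$ one gets $\theta\cap\theta'=\Delta_{E_4}$, so $\pol(\theta)\cap\pol(\theta\cap\theta')=\pol(\theta)$ and your displayed chain says nothing. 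Exactly as in the proof of Theorem \ref{maintheo-equiv-equiv}, the case $\theta\cap\theta'=\Delta_{E_k}$ requires a separate witness relation (there the paper uses $\sigma=\rho\circ\theta\cap\theta\circ\rho$ in the lemma treating $\rho\cap\theta=\Delta_{E_k}$); you need the analogous construction here.

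The second gap is in the sufficiency direction and is more serious. Your classification of the $\pol(\theta)\cap\pol(\rho)$-invariant binary relations only discusses $\pi^i(\theta)$ and $\theta\cap\pi^i(\theta)$, but the relational products $\theta\circ\rho$, $\rho\circ\theta$ and their intersections are also invariant (they are primitive-positive in $\theta$ and $\rho$), and they defeat the claim under your reading of condition (b). Take $E_4$, $\pi=(0\,1)(2\,3)$, and $\theta$ with classes $B_0=\{0,2\}$, $B_1=\{1,3\}$, so that $\pi$ maps each class onto the other and (b) holds. Then $\sigma:=\theta\circ\rho=(B_0\times B_1)\cup(B_1\times B_0)$ is invariant, the unary map $f$ with $f(0)=0$, $f(1)=1$, $f(2)=2$, $f(3)=1$ lies in $\pol(\theta)\cap\pol(\sigma)$ but not in $\pol(\rho)$ (since $(3,2)\in\rho$ while $(f(3),f(2))=(1,2)\notin\rho$), and any constant lies in $\pol(\theta)\setminus\pol(\sigma)$; hence $\pol(\theta)\cap\pol(\rho)\varsubsetneq\pol(\theta)\cap\pol(\sigma)\varsubsetneq\pol(\theta)$ and there is no covering. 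So under the interpretation you adopt (``$\pi$ induces a nontrivial permutation of the $\theta$-classes''), sufficiency is simply false; the hypothesis in \cite{GRE} and \cite{LAU} must say more (in particular it must control the classes that $\pi$ moves), and the statement as reproduced in the paper is too loose to be proved as written. Before running the Baker--Pixley argument you must pin down the exact hypothesis and test your proposed majority operation and your relation classification against $\theta\circ\rho$.
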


\section{Central relations and $h$-regular relations}\label{sec5}
 In this section, $\theta$ is a nontrivial equivalence relation on $E_{k}$, whose equivalence classes are: $C_0,C_1, \ldots ,C_{t-1}$.
 Our aims is to characterize the central relations or $h$-regular relations $\rho$ such that $\text{Pol}(\rho)\cap\text{Pol}(\theta)$
 is maximal in $\text{Pol}(\theta) $.

Firstly, we give some definitions to be used. Let $\rho$ be an
$h$-ary relation $(h>1)$ on $E_{k}$. For $i\in\{0;1;2;\cdots;h-1\}$
we define  the relation $\rho_{i,\theta}$ by
\[\rho_{0,\theta}=\{(a_{1},\ldots,a_{h})\in E_{k}^{h}/ \exists u_{i}\in[a_{i}]_{\theta}, i\in\{1;\cdots;h\} \text{ with } (u_{1},u_{2},\ldots,u_{h})\in \rho\}\]
and for $i\geq 1$,
\begin{eqnarray*}\label{}
    \rho_{i,\theta}=\left\{(a_{1},\ldots,a_{h})\in E_{k}^{h}/ \exists u_{j}\in[a_{j}]_{\theta}, j\in\{i+1;\cdots;h\}\right.\\
\left.\qquad\text{ with } (a_{1},a_{2},\ldots,a_{i},u_{i+1},\ldots,u_{h})\in \rho\right\}.
\end{eqnarray*}

For $\sigma\in \mathcal{S}_{h}$ and $\gamma$ an $h$-ary relation, we
set $$\gamma_{\sigma}=\{(a_{\sigma(1)},\ldots,a_{\sigma(h)})/
(a_{1},\ldots,a_{h})\in\gamma\}.$$

For $J=\{j_1; \ldots; j_{n}\} \subseteq \{1; \ldots; h\}$ with $j_1< \ldots <j_{n}$, we define the $h$-ary relation $\rho_{J}$ or $\rho_{j_{1} \ldots
j_{n}}$ on $E_k$ as follow:
\begin{eqnarray*}
  \rho_{J} &=& \left\{\left( a_1,  a_2,\ldots,  a_h \right) | \exists u_{i.J}\in[a_{i}]_{\theta}, i\in \{1;\cdots;h\}\setminus J=\{i_1,...,i_{h-n}\}
\text{ such that }\right. \\
   & & \left.\left( a_{j_1},\ldots,  a_{j_n},  u_{i_1.J},\ldots,u_{i_{h-n}.J} \right)\in\rho\right\}.
\end{eqnarray*}

The next remark gives some properties of those relations.
\begin{remark}\text{ }
\begin{enumerate}
    \item For $i\in\{0;1;2;\cdots;h-1\}$, $\rho\subseteq\rho_{i,\theta}$ and $\pol(\rho)\cap\pol(\theta)\subseteq\pol(\rho_{i,\theta})$.
    \item $\rho_{0,\theta}$ is totally symmetric.
    \item For $\sigma\in \mathcal{S}_{h}$, $\pol(\rho)=\pol(\rho_{\sigma})$.
    \item If $\{j_1,...,j_n\}\subseteq\{r_1,...,r_m\}$, then $\rho_{\{r_1,...,r_m\}}\subseteq\rho_{\{j_1,...,j_n\}}.$
    \item for $J=\{1;\cdots;n\}$ we have $\rho_{J}=\rho_{n,\theta}$.
    \item For all $1\leq j_1< \ldots <j_{n}\leq h$, there exists a permutation $\sigma\in\mathcal{S}_{h}$ such that $\rho_{j_{1} \ldots
j_{n}}=(\rho_{n,\theta})_{\sigma}.$
\end{enumerate}
\end{remark}

\begin{definition}\label{def-theta-transversal} Let $\rho$ be an $h$-ary relation and $\theta $ be a nontrivial equivalence relation on $E_k$ with $t$ classes.

\begin{enumerate}
    \item There is a transversal $T$ for the $\theta $-classes means that there exist $u_1,\ldots,$ $u_t\in E_k$ such that $(u_i, u_j) \notin\theta $ for all $1 \leq i < j\leq t$,
$(u_{i_{1}},u_{i_{2}}, \ldots ,u_{i_{h}})\in \rho$ for all $1 \leq i_{1}, \ldots ,i_{h} \leq t$ and $T = \{u_1,  \ldots  , u_t\}$.
    \item There is a transversal $T$ of order $l$ ($1\leq l\leq h-1$) for the $\theta $-classes means that there exist
$u_1,\ldots,$ $u_t\in E_k$ such that $(u_i, u_j) \notin\theta $ for all $1 \leq i < j\leq t$,  $(a_1,a_2,\ldots,a_l,v_{l+1},v_{l+2},  \ldots  , v_{h})\in
\rho$,  for all    $a_1,a_2,...,a_l\in E_k$ and $v_{l+1},v_{l+2},  \ldots  , v_{h}\in\{u_{1};\cdots;u_{t}\}$; and $T=\{u_1,...,u_t\}$.
\end{enumerate}
\end{definition}
\begin{definition}
A transversal of order $0$  for the $\theta$-classes means a
transversal for the $\theta$-classes.
\end{definition}

\begin{definition}\label{def-theta-fermeture} Let $\rho$ be an $h$-ary relation and $\theta $ be a nontrivial equivalence relation on $E_k$ with $t$ classes.

\begin{enumerate}
    \item $\rho$ is $\theta$-closed means that $\rho=\rho_{0,\theta}$.
    \item $\rho$ is weakly $\theta $-closed of order $l$($1\leq l\leq h-1$)  means that there is a transversal $T=\{u_{1};\cdots;u_{t}\}$ of order $l-1$
    for the $\theta$-classes and  $\rho= \underset{\substack{\sigma\in\mathcal{S}_{h}}}{\bigcap} (\rho_{l,\theta})_{\sigma}.$
\end{enumerate}
\end{definition}

Secondly, we characterize some particular relations. We consider the
surjective map
$$\begin{array}{rccl}
                                  \varphi: & E_{k} & \rightarrow & E_{t} \\
                                   & x & \mapsto & \varphi(x)=i \text{ if } x\in C_{i}.
                                \end{array}
$$

For an $n$-ary relation $\alpha$ on $E_{t}$, we set
    \[\varphi^{-1}(\alpha)=\{(a_{1},\ldots,a_{n})\in E_{k}^{n}: (\varphi(a_{1}),\varphi(a_{2}),\ldots,\varphi(a_{n}))\in \alpha\};\]
for an $n$-ary relation $\beta$ on $E_{k}$, we set
\[\varphi(\beta)=\{(\varphi(a_{1}),\varphi(a_{2}),\ldots,\varphi(a_{n})): (a_{1},\ldots,a_{n})\in \alpha\}.\]
\begin{remark}
With the previous considerations and for a central relation $\rho$, we have:
\begin{itemize}
    \item $\rho$ is $\theta$-closed if and only if there exists an $h$-ary central relation $\gamma$ on $E_{t}$ such that $\rho=\varphi^{-1}(\gamma).$
    \item If $\rho$ is weakly $\theta $-closed of order $l$($1\leq l\leq h-1$),  then
    $\pol((\rho_{l,\theta})_{\sigma_{1}}\cap\cdots\cap(\rho_{l,\theta})_{\sigma_{n}})\subseteq\pol(\rho)$ for
    $\{\sigma_{1};\cdots;\sigma_{n}\}\subseteq\mathcal{S}_{h}$
\end{itemize}
\end{remark}

\begin{remark}\label{def-theta-close-trans-binary}
 Let  $\rho$  be a binary relation and   $\theta$   be a nontrivial equivalence relation on $E_k$ with $t$ classes.
\begin{enumerate}
    \item [(1)]  $\rho$  is   $\theta$-closed if and only if  $\rho  =  \theta\circ
\rho\circ \theta$,
    \item [(2)] $\rho$ is weakly $\theta$-closed of order $1$(or simply weakly $\theta$-closed) if and only if $\rho  = (
    \theta\circ\rho)\cap(\rho\circ\theta),$ and there is a transversal $T$ for the $\theta$-classes.
\end{enumerate}
\end{remark}

Thirdly, we characterize the central relations $\rho$ generating the
submaximality.

\subsection{Central relations}\label{central-relation-subsection}
We recall that for $k=2$, we have the result in the Post's description. If $k\geq 3$ and $h\in\{1;2\}$ the following results give the characterization of
existing submaximal classes.
\begin{lemma}\cite{LAU}
If $h=1$, then $\text{Pol}(\rho)\cap\text{Pol}(\theta)$ is maximal in $\text{Pol}(\theta) $ if and only if the following condition is valid:
$$(\exists I\subset\{0;\cdots;t-1\}: \rho=\underset{i\in I}\cup C_{i})\vee \forall j\in\{0;\cdots;t-1\}, \rho\cap C_{j}\neq \emptyset  $$
\end{lemma}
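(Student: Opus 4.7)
\emph{Necessity.} I would argue by contrapositive. Suppose both alternatives fail: some class $C_i$ satisfies $\emptyset\neq C_i\cap\rho\neq C_i$, and some class $C_j$ satisfies $C_j\cap\rho=\emptyset$. Form the $\theta$-saturation $\bar{\rho}:=\bigcup\{C_s:C_s\cap\rho\neq\emptyset\}$, which is a union of $\theta$-classes with $\rho\subsetneq\bar{\rho}\subsetneq E_k$. I then verify the chain
\[\pol(\theta)\cap\pol(\rho)\subsetneq\pol(\theta)\cap\pol(\bar{\rho})\subsetneq\pol(\theta).\]
The left inclusion is immediate (any operation preserving $\theta$ and $\rho$ preserves the $\theta$-closure of $\rho$), and strictness is witnessed by the unary swap exchanging $a\in C_i\cap\rho$ with $b\in C_i\setminus\rho$ and fixing everything else: it preserves $\theta$ and $\bar{\rho}$ but not $\rho$. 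The right inclusion is strict because a constant unary operation with value in $E_k\setminus\bar{\rho}$ lies in $\pol(\theta)\setminus\pol(\bar{\rho})$.

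\emph{Sufficiency, case $(a)$.} Write $\rho=\varphi^{-1}(I)$ for some proper nonempty $I\subsetneq E_t$. The assignment $f\mapsto\bar{f}$, with $\bar{f}(\varphi(x_1),\ldots,\varphi(x_n))=\varphi(f(x_1,\ldots,x_n))$, is a well-defined surjective clone homomorphism $\Phi\colon\pol(\theta)\to\mathcal{O}(E_t)$, and $\Phi(\pol(\theta)\cap\pol(\rho))=\pol(I)$ because $\rho=\varphi^{-1}(I)$. Since $I$ is a unary central relation on $E_t$, Rosenberg's theorem gives that $\pol(I)$ is maximal in $\mathcal{O}(E_t)$. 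I transfer maximality upward by showing that $C\mapsto\Phi(C)$ is a bijection between the intervals $[\pol(\theta)\cap\pol(\rho),\pol(\theta)]$ and $[\pol(I),\mathcal{O}(E_t)]$. The key lifting step: for any $f,g\in\pol(\theta)$ with $\bar{f}=\bar{g}$, the $(n{+}1)$-ary operation $h$ defined by $h(y,x_1,\ldots,x_n)=f(x_1,\ldots,x_n)$ when $y=g(x_1,\ldots,x_n)$ and $h(y,x_1,\ldots,x_n)=y$ otherwise lies in $\pol(\theta)\cap\pol(\rho)$ (its $\Phi$-image is the first projection, which preserves $I$ trivially) and satisfies $f=h[g,e_1^n,\ldots,e_n^n]$. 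Hence $g\in C$ forces $f\in C$ for any clone $C$ in the interval, giving the required bijection.

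\emph{Sufficiency, case $(b)$, and main obstacle.} Now $\rho$ meets every $\theta$-class; pick representatives $r_j\in C_j\cap\rho$ and form the retraction $\pi(x):=r_{\varphi(x)}$, which lies in $\pol(\theta)\cap\pol(\rho)$. Given any $f\in\pol(\theta)\setminus\pol(\rho)$, I would show $\langle(\pol(\theta)\cap\pol(\rho))\cup\{f\}\rangle=\pol(\theta)$: composing $f$ with $\pi$ coordinatewise produces an operation attaining the values outside $\rho$ that witness $f\notin\pol(\rho)$, and combining this with the abundant intra-class operations inside $\pol(\theta)\cap\pol(\rho)$ (the anchors $r_j$ allow an arbitrary fibre-wise function to be realised while still preserving $\rho$) one reconstructs any prescribed $g\in\pol(\theta)$ by a lifting argument analogous to case $(a)$. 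The principal difficulty is this reconstruction in case $(b)$: unlike case $(a)$ there is no clean quotient to factor through, so the intra-class flexibility required to rebuild $\pol(\theta)$ from a single escaping operation $f$ must be extracted directly from the hypothesis that $\rho$ meets every $\theta$-class.
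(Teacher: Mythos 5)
The paper offers no proof of this lemma (it is quoted from Lau's book), so your attempt can only be judged on its own terms. Your necessity argument is correct and complete: the $\theta$-saturation $\bar{\rho}$ sits strictly between $\rho$ and $E_k$ exactly when both alternatives fail, and the chain $\pol(\theta)\cap\pol(\rho)\varsubsetneq\pol(\theta)\cap\pol(\bar{\rho})\varsubsetneq\pol(\theta)$ with the swap and the constant as witnesses is sound. Case (a) of sufficiency is also essentially complete: the quotient homomorphism $\Phi$, the identification $\Phi(\pol(\theta)\cap\pol(\rho))=\pol(I)$, and the lifting operation $h(y,\vec{x})=f(\vec{x})$ if $y=g(\vec{x})$, else $y$ (which preserves $\theta$ because $\bar{f}=\bar{g}$ forces $f(\vec{x})\,\theta\,g(\vec{x})$, and preserves $\rho$ because $\rho$ is $\theta$-saturated) correctly reduce maximality to the maximality of $\pol(I)$ in $\mathcal{O}(E_t)$.

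Case (b) is a genuine gap, and the two devices you name do not work as stated. First, $f(\pi(x_1),\ldots,\pi(x_n))$ need not attain a value outside $\rho$: at the witnessing tuple it evaluates $f$ at the representatives $r_{\varphi(a_i)}$ rather than at $a_1,\ldots,a_n\in\rho$, and the result is only $\theta$-equivalent to $f(a_1,\ldots,a_n)$, hence may well lie back in $\rho$ since every class meets $\rho$. Second, a "lifting argument analogous to case (a)" keyed on the condition $y=g'(\vec{x})$ cannot be run inside $\pol(\theta)\cap\pol(\rho)$: the resulting operation outputs $g(\vec{x})$ on a tuple all of whose entries lie in $\rho$, and $g$ is precisely the operation that may leave $\rho$, so the lifting operation itself fails to preserve $\rho$. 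The repair is different and is what your sketch is missing: substitute the \emph{constants} $a_1,\ldots,a_n$ (which are in $\rho$, so these constants lie in $\pol(\theta)\cap\pol(\rho)$) into $f$ to place the constant $c:=f(a_1,\ldots,a_n)\notin\rho$ in the generated clone; then, for an arbitrary $m$-ary $g\in\pol(\theta)$ with bad set $W=\{\vec{x}\in\rho^m: g(\vec{x})\notin\rho\}$, define $g_0$ by replacing $g(\vec{x})$ with $r_{\varphi(g(\vec{x}))}$ for $\vec{x}\in W$ (this uses that $\rho$ meets every class and gives $g_0\in\pol(\theta)\cap\pol(\rho)$), and define $H(y,\vec{x})=g(\vec{x})$ if $y=c$ and $\vec{x}\in W$, and $H(y,\vec{x})=g_0(\vec{x})$ otherwise. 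Then $H$ preserves $\theta$ (all case values are $\theta$-equivalent), preserves $\rho$ (inputs in $\rho$ force $y\neq c$, hence the $g_0$ branch), and $g=H[\mathrm{const}_c,e^m_1,\ldots,e^m_m]$. Without some such explicit mechanism your case (b) remains an assertion rather than a proof.
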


\begin{theorem}\cite{TEM-ROS}
If $h=2$, then $\text{Pol}(\rho)\cap\text{Pol}(\theta)$ is maximal in $\text{Pol}(\theta) $ if and only if one of the following conditions is valid:
\begin{enumerate}
    \item [(i)] $\theta\subseteq\rho$ and every $\theta$-class contains a central element of $\rho$;
    \item [(ii)] $\rho$ is $\theta$-closed;
    \item [(iii)] $\rho$ is weakly $\theta$-closed of order $1$.
\end{enumerate}
\end{theorem}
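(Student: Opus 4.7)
The plan is to apply the Baker--Pixley theorem (Theorem \ref{baker-pixley}) in both directions. For sufficiency, I would exhibit a majority operation $m \in \pol(\rho) \cap \pol(\theta)$ in each of the three cases, which by Baker--Pixley reduces the classification of clones in the interval $[\pol(\rho)\cap\pol(\theta), \pol(\theta)]$ in $\mathcal{L}(E_k)$ to a classification of binary relations $\beta$ invariant under $\pol(\rho)\cap\pol(\theta)$; the target is to show that modulo clone equivalence, only $\Delta_{E_k}$, $\theta$, $\rho$, and $E_k^2$ occur, so that the interval collapses to just two elements.

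For the three sufficiency cases I would proceed as follows. In case (i), since every $\theta$-class contains a central element of $\rho$, a majority $m(x,y,z)$ can be defined by returning the repeated argument when two inputs coincide and otherwise a fixed central element in the $\theta$-class of $x$; the assumption $\theta \subseteq \rho$ combined with centrality makes $m$ preserve both $\rho$ and $\theta$. In case (ii), Remark \ref{def-theta-close-trans-binary}(1) gives $\rho = \varphi^{-1}(\bar\rho)$ for a binary central relation $\bar\rho$ on the quotient $E_k/\theta$, so $\pol(\rho)\cap\pol(\theta)$ is the full preimage under $f \mapsto f/\theta$ of $\pol(\bar\rho)$; the maximality then lifts directly from Rosenberg's maximality theorem for central relations applied on $E_k/\theta$. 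In case (iii), the transversal $T$ furnishes coherent canonical representatives, and the closure identity $\rho = (\theta\circ\rho)\cap(\rho\circ\theta)$ lets us patch a majority operation together piecewise on $\theta$-classes.

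For necessity, assume $\pol(\rho)\cap\pol(\theta)$ is maximal in $\pol(\theta)$ and iterate the closure operators $\rho' := \theta\circ\rho\circ\theta$ and $\rho'' := (\theta\circ\rho)\cap(\rho\circ\theta)$. If $\rho \varsubsetneq \rho' \varsubsetneq E_k^2$, then $\pol(\theta)\cap\pol(\rho')$ is a strict intermediate clone, forcing either $\rho' = \rho$ (case (ii)) or $\rho' = E_k^2$; in the latter case the analogous step with $\rho''$ forces $\rho'' = \rho$, and the existence of a transversal is extracted by showing that its absence creates extra polymorphisms which make $\pol(\rho)\cap\pol(\theta)$ non-maximal. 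If $\theta \subseteq \rho$ but some $\theta$-class lacks a central element of $\rho$, a unary operation concentrating images on that class witnesses an intermediate clone that preserves a strictly larger binary relation derived from $\rho$. The main obstacle is the necessity part in case (iii): separating the algebraic closure condition $\rho = (\theta\circ\rho)\cap(\rho\circ\theta)$ from the geometric transversal condition requires delicate piecewise witness operations tailored to each $\theta$-class, and Baker--Pixley is indispensable throughout because it reduces the otherwise infinite clone comparison to a finite check on binary invariants.
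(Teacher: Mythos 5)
The paper does not actually prove this statement: it is quoted from Temgoua and Rosenberg \cite{TEM-ROS} as the known $h=2$ case, so there is no internal proof to compare against. Your plan is nonetheless the same strategy that \cite{TEM-ROS} and this paper's own $h\geq 3$ generalization follow (Theorem~\ref{maintheorem}, via Lemmas~\ref{caracterisation-diagonal-all} and \ref{existence-unanimity-all} for sufficiency and the chain of Lemmas~\ref{major-criterion}--\ref{lemma-classe-central} for necessity): a majority operation in $\pol(\rho)\cap\pol(\theta)$ plus Baker--Pixley to reduce the interval to binary invariant relations, a classification of those invariants, and for necessity an iteration of the closure operators $\theta\circ\rho\circ\theta$ and $(\theta\circ\rho)\cap(\rho\circ\theta)$. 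Your quotient argument for case (ii) is a legitimate shortcut and is exactly the device the paper uses for $h$-regular relations (Lemma~\ref{caracterisation-relation-h-reguliere}).

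Two points need repair. First, your stated target for the Baker--Pixley step, namely that the only binary invariants are $\Delta_{E_{k}}$, $\theta$, $\rho$ and $E_{k}^{2}$, is false in case (iii): when $\rho$ is only weakly $\theta$-closed, the relations $\rho\circ\theta$ and $\theta\circ\rho$ are also invariant and strictly contain $\rho$. The interval still collapses, but only because $\rho=(\theta\circ\rho)\cap(\rho\circ\theta)$ together with the identity $\pol(\sigma)=\pol(\sigma^{-1})$ forces any $f\notin\pol(\rho)$ to violate $\rho\circ\theta$ and $\theta\circ\rho$ simultaneously; this is precisely where the weak closure hypothesis enters, and your sketch does not record it. Second, the necessity direction is only a programme: the witness operations separating $\pol(\rho)\cap\pol(\theta)$ from $\pol(\theta)\cap\pol(\rho')$ when $\rho\varsubsetneq\rho'\varsubsetneq E_{k}^{2}$, and the derived relations whose failure to be trivial detects the absence of a transversal or of a central element in some $\theta$-class (the $h=2$ analogues of $\gamma'$ and $\zeta^{k}$ in Lemmas~\ref{lem-rec-II-gamma3'} and \ref{lem-rec-I-gamma4}), are the actual technical content of the theorem and are not constructed. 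As a blueprint the proposal is sound and faithful to the method; as a proof it is not yet complete.
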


In the remaining of this subsection we suppose that $h\geq 3$ and $k\geq 3$.

For a nontrivial equivalence relation $\theta$, we define the $h$-ary relation $\eta$ by
$$\eta=\left\{\left(u_1 , u_2 ,\ldots,u_h\right)\in E_{k}^{h}
/ u_1\theta u_2\right\}.$$ Here we state the main theorem of this subsection:
\begin{theorem}\label{maintheorem}
Let $k \geq 3$ and let $\theta $ be a nontrivial equivalence
relation on $E_k$ with $t$ equivalence classes. For an $h$-ary
central relation $\rho $ on $E_k$, the clone
$\text{Pol}(\rho)\cap\text{Pol}(\theta)$  is a  submaximal clone  of
$\text{Pol}(\theta) $ if and only if $h\leq t $ and  $\rho $
satisfies one of the following three conditions:
\begin{enumerate}
    \item [I.] $\eta\subseteq \rho$ and every $\theta $-class contains a central element of $\rho $;
    \item [II.] $\rho $ is $\theta $-closed;
    \item [III.] $\rho$ is weakly $\theta $-closed of order $l$ and $\eta\subseteq \rho$.
\end{enumerate}
\end{theorem}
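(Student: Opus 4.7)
The proof will split into the sufficiency and necessity directions, in the spirit of Theorem \ref{maintheo-equiv-equiv} and of the binary-central case cited from \cite{TEM-ROS}. In both directions, the canonical source of candidate intermediate clones sitting strictly between $\pol(\rho)\cap\pol(\theta)$ and $\pol(\theta)$ is the family of relations $\rho_{i,\theta}$, $\rho_{J}$, and the intersections $\bigcap_{\sigma\in\mathcal{S}_{h}}(\rho_{l,\theta})_{\sigma}$ introduced just above the statement.

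\textbf{Sufficiency.} Assume $h\leq t$ and one of I, II, III. The plan is first to exhibit in $\pol(\rho)\cap\pol(\theta)$ a near-unanimity operation of some arity $d+1$, and then invoke Theorem \ref{baker-pixley}. In case II, the remark following Definition \ref{def-theta-fermeture} gives $\rho=\varphi^{-1}(\gamma)$ for a central relation $\gamma$ on $E_{t}$; a near-unanimity in $\pol(\gamma)$ lifts through $\varphi$ to a near-unanimity of $\pol(\rho)\cap\pol(\theta)$, and the problem reduces to maximality of $\pol(\gamma)$ on $E_{t}$, which is classical. In case I, using $\eta\subseteq\rho$ together with a fixed central element chosen in each $\theta$-class, I would define an $(h+1)$-ary operation that outputs the chosen central element of the appropriate block whenever two inputs are $\theta$-related and falls back on a projection otherwise; total reflexivity, total symmetry, and the blockwise choice of central elements are exactly what is needed for simultaneous $\rho$- and $\theta$-preservation. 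Case III is analogous, with the transversal of order $l-1$ from Definition \ref{def-theta-transversal} replacing the pure central system. Once a near-unanimity is in hand, Theorem \ref{baker-pixley} identifies any intermediate clone with $\pol(\beta)\cap\pol(\theta)$ for a relation $\beta$ of arity at most $d$, and the remaining task is to show that any such $\beta$ is already forced to coincide with $\rho$, with $\theta$, with a diagonal, or with a full power of $E_{k}$.

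\textbf{Necessity.} For the converse, I would assume that none of I, II, III holds and manufacture a proper intermediate relation. The first dichotomy is whether $\rho=\rho_{0,\theta}$. If $\rho\neq\rho_{0,\theta}$, then $\rho_{0,\theta}$ is a proper totally symmetric, totally reflexive extension of $\rho$, and $\pol(\theta)\cap\pol(\rho_{0,\theta})$ is the natural candidate: the inclusion $\pol(\rho)\cap\pol(\theta)\subseteq\pol(\theta)\cap\pol(\rho_{0,\theta})$ is immediate from the first item of the remark following the definitions, and strictness on each side is witnessed by explicit unary or binary operations constant on each $\theta$-block, along the lines of $g_{1}$ and $g_{2}$ in Section \ref{sec3} and of the function $f$ in Lemma \ref{equivequivL1}. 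If $\rho=\rho_{0,\theta}$ so that II holds but neither I nor III does, I would instead pick $\rho'=\bigcap_{\sigma\in\mathcal{S}_{h}}(\rho_{l,\theta})_{\sigma}$ for a suitable $l\geq 1$ detected from the simultaneous failure of I and III, and show that $\pol(\theta)\cap\pol(\rho')$ sits strictly between. The condition $h\leq t$ must also be forced: if $h>t$, then $\rho_{0,\theta}$ degenerates to $E_{k}^{h}$, which eliminates the totally symmetric candidates behind cases I and II, and simultaneously no transversal in the sense of Definition \ref{def-theta-transversal} can exist, ruling out III.

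\textbf{Main obstacle.} The hardest step I expect is the explicit construction of the near-unanimity operations in cases I and III. One must define $m(x_{1},\ldots,x_{d+1})$ by a case distinction sensitive to how many of the $x_{i}$ lie in a common $\theta$-class, which of them are in the central part of $\rho$, and which belong to the chosen transversal, all while preserving the combined totally reflexive and totally symmetric structure of $\rho$ together with $\theta$-preservation. The second delicate point is, in the necessity direction when $\rho$ is $\theta$-closed but fails III, to pinpoint the exact $l$ for which $\bigcap_{\sigma}(\rho_{l,\theta})_{\sigma}$ lies strictly between $\rho$ and $E_{k}^{h}$ without accidentally collapsing to either endpoint, and to produce witness operations analogous to $g_{2}$ in the proof of Theorem \ref{maintheo-primeaffine-equiv}. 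The bookkeeping with the operators $\rho_{J}$ is where most of the combinatorial work lies.
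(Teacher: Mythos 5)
Your sufficiency sketch is broadly aligned with the paper's route (near-unanimity operation plus Theorem \ref{baker-pixley}), but it leaves the real work as ``the remaining task'': one must classify \emph{all} $h$-ary relations invariant under $\pol(\rho)\cap\pol(\theta)$, and for type III these are not just $\rho$, diagonals and $E_k^h$ but all intersections of relations $(\rho_{l,\theta})_{\sigma}$, $\sigma\in\mathcal{S}_h$; one then still has to argue that an operation of $\pol(\theta)\setminus\pol(\rho)$ violates every such intersection. This classification (the paper's Lemma \ref{caracterisation-diagonal-all}) is the bulk of the sufficiency proof and your proposal gives no plan for it. Also, your description of the type I operation (``whenever two inputs are $\theta$-related'') is not what is needed; the correct case split is on whether $h$ of the $h+1$ arguments agree, or lie in a common $\theta$-class.

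The necessity direction has a more serious gap. Your first dichotomy — interpolate $\pol(\theta)\cap\pol(\rho_{0,\theta})$ whenever $\rho\neq\rho_{0,\theta}$ — only works when $\rho_{0,\theta}\varsubsetneq E_k^h$; if $\rho\varsubsetneq\rho_{0,\theta}=E_k^h$ the candidate collapses to $\pol(\theta)$ itself and proves nothing. That degenerate case is precisely where almost all of the paper's necessity argument lives: one must pass to higher-arity invariants $\rho_{0,\theta}^{l}$ ($l>h$, up to $l=t$ and $l=k$), then to $\varsigma=\bigcap_{\sigma}(\rho_{1,\theta})_{\sigma}$ and its companions $\varsigma^{l}$, $\gamma'$, $\zeta$, $\zeta^{k}$, and finally run an induction on the relations $^{h}\beta_{n}$ that either certifies a transversal of increasing order (landing in type III or I) or produces a proper intermediate clone. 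Your fallback of ``pick $\rho'=\bigcap_{\sigma}(\rho_{l,\theta})_{\sigma}$'' is attached to the branch $\rho=\rho_{0,\theta}$, where type II already holds and there is nothing to prove, and in any case does not address the possibility that all these $h$-ary intersections also equal $E_k^h$, which is what forces the escalation to $l$-ary relations for $l>h$. Finally, all of this machinery presupposes $\eta\subseteq\rho$, which must itself be derived from maximality (the paper's Lemma \ref{major-criterion}, via the auxiliary relation $\eta^{1}$ and explicit witness operations); your proposal never establishes this prerequisite.
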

The proof of Theorem \ref{maintheorem} will follow from the results
obtained below. It will be given at the end of this subsection.
\begin{definition}
Let $l\in\{I; II; III\}$. $\rho $ is of type $l$ if $\rho $ satisfies the condition $l$ of Theorem \ref{maintheorem}.
\end{definition}
The following examples clarify the type of relations defined above.
\begin{example}
Let $k\geq 3$ be an integer and $0\leq i<j<r<n\leq k-1$, we denote
by $A_{i,j,r}$ and $A_{i,j,r,n}$ the sets

$$A_{i,j,r}:=\{(\sigma(i),\sigma(j),\sigma(r)); \sigma\in\mathcal{S}_{\{i;j;r\}}\} $$ and
$$A_{i,j,r,n}:=\{(\sigma(i),\sigma(j),\sigma(r),\sigma(n)); \sigma\in\mathcal{S}_{\{i;j;r;n\}}\}.$$

We consider the following equivalence relations $\theta_{i}$ defined by their equivalence classes denoted by $C_{m}^{i}$

$\theta_{1} \text{ is defined on } E_{6} \text{ by } C_{0}^{1}=\{0;1\}, C_{1}^{1}=\{2;3\}, C_{2}^{1}=\{4;5\} $;

$\theta_{2} \text{ is defined on } E_{5} \text{ by } C_{0}^{2}=\{0;1\}, C_{1}^{2}=\{2\}, C_{2}^{2}=\{3\}, C_{3}^{2}=\{4\}$;

$\theta_{3} \text{ is defined on } E_{4} \text{ by } C_{0}^{3}=\{0;1\}, C_{1}^{3}=\{2\}, C_{2}^{3}=\{3\}$;

$\theta_{4} \text{ is defined on } E_{8} \text{ by } C_{0}^{4}=\{0;1;2\}, C_{1}^{4}=\{3;4;5\}, C_{2}^{4}=\{6;7\}$;

$\theta_{5} \text{ is defined on } E_{8} \text{ by } C_{0}^{5}=\{0;1;2\}, C_{1}^{5}=\{3;4\}, C_{2}^{5}=\{5;6\}, C_{3}^{5}=\{7\}$;

and the relations

$$\Upsilon_{1}=E_{6}^{3}\setminus A_{1,2,5},\Upsilon_{2}=E_{5}^{3}\setminus A_{2,3,4},\Upsilon_{3}=E_{4}^{3}\setminus A_{1,2,3}$$

$$\Upsilon_{4}=E_{8}^{3}\setminus (A_{1,4,6}\cup A_{1,4,7}\cup A_{1,5,7}\cup A_{1,5,6}\cup A_{2,3,7}\cup A_{2,4,6}\cup A_{2,4,7}\cup A_{2,5,6}\cup A_{2,5,7} )$$

$$\Upsilon_{5}=E_{8}^{4}\setminus (A_{1,4,6,7}\cup A_{2,4,6,7})$$

It is easy to see that: $\Upsilon_{1}$ is a central relation of type $I$ with $\theta_{1}$; $\Upsilon_{2}$ is a central relation of type $II$ but not of
type $I$ with $\theta_{2}$; with $\theta_{3}$, $\Upsilon_{3}$ is a central relation whose center is $\{0\}$   but it is neither of type, $I$, $II$ or
$III$.

$\Upsilon_{4}$ is weakly $\theta_{4} $-closed of order $2$ with a
transversal of order $1$, $T_{1}=\{0;3;6\}$

$\Upsilon_{5}$ is weakly $\theta_{5} $-closed of order $3$ with a
transversal of order $2$, $T_{2}=\{0;3;5;7\}$
\end{example}

\begin{definition}\label{def-rel-diagonale}
Let $\theta$ be an equivalence relation on $E_k$. An $h$-ary
relation $\tau$  on $E_k$ is said to be diagonal through $\theta$ if
there exists an equivalence relation $\varepsilon_{1}$ on $\{1;2;
\ldots ;h\}$ with equivalence classes $A_1,A_2, \ldots ,A_l$ and an
equivalence relation $\varepsilon_{2}$ on $\{\min (A_m); 1\leq m\leq
l\}$ such that $$\tau=\left\{\left(a_{1},a_{2},\hdots, a_{h}
\right)\in E_k^{h}/ ((i,j)\in\varepsilon_{1}\Rightarrow
a_{i}=a_{j})\text{ and }((i,j)\in\varepsilon_{2}\Rightarrow
a_{i}\theta a_{j})\right\}.$$
\end{definition}

Given two equivalence relations $\theta_1$ and $\theta_2$ satisfying
Definition \ref{def-rel-diagonale}, we denote by
$D_{\theta_1\theta_2}$ the corresponding diagonal relation through
$\theta$.
\subsubsection{Proof of the necessity criterion in Theorem \ref{maintheorem}}\label{sec5'-subsec5}
\begin{proposition}\label{sufficiency-direction}
If $k \geq 3$, $\theta $ is a nontrivial equivalence relation on
$E_k$, and $\rho$ is an $h$-ary central relation  on $E_k$ such that
one of conditions I-III is satisfied, then the clone
$\text{Pol}(\rho)\cap\text{Pol}(\theta)$  is a submaximal clone of
$\text{Pol}(\theta) $.
\end{proposition}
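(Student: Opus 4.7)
The plan is to prove submaximality separately in each of the cases I, II, III, following the blueprint of Theorem \ref{maintheo-equiv-equiv}: exhibit a near-unanimity operation inside $\pol(\rho)\cap\pol(\theta)$ so that Theorem \ref{baker-pixley} applies, and then show that any clone $C$ with $\pol(\rho)\cap\pol(\theta)\varsubsetneq C\subseteq\pol(\theta)$ must coincide with $\pol(\theta)$. The proper containment $\pol(\rho)\cap\pol(\theta)\varsubsetneq\pol(\theta)$ will be easy to verify in each case by producing a unary $\theta$-preserving operation that sends a central element of some $\theta$-class outside the centre of $\rho$.

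For the construction of the near-unanimity term, Case II is the cleanest: since $\rho=\varphi^{-1}(\gamma)$ for some $h$-ary central relation $\gamma$ on $E_t$, one picks a near-unanimity operation $m$ on $E_t$ preserving $\gamma$ and lifts it through a fixed choice of class representatives. In Case I, the hypothesis $\eta\subseteq\rho$ together with the existence of a central element of $\rho$ in every $\theta$-class $C_i$ lets us define a near-unanimity operation that, on a $\theta$-diverse input $(x_1,\ldots,x_n)$, returns a chosen central element of $\rho$ lying in $C_{\varphi(x_1)}$, and that returns a dominant argument otherwise. Case III blends the two constructions: on tuples captured by the transversal $T$ of order $l-1$ the behaviour mimics Case I using $\eta\subseteq\rho$, while off $T$ we descend to the quotient as in Case II, the inclusion $\eta\subseteq\rho$ serving to glue the two regimes together.

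For maximality I would fix $f\in\pol(\theta)\setminus\pol(\rho)$ and set $C=\langle(\pol(\rho)\cap\pol(\theta))\cup\{f\}\rangle$. By Theorem \ref{baker-pixley}, it suffices to verify that every relation $\beta$ of the appropriate arity preserved by $C$ is already preserved by $\pol(\theta)$. Fixing such a $\beta$ and feeding into it the unary $\theta$-preserving operations already exhibited (to witness the proper containment $\pol(\rho)\cap\pol(\theta)\varsubsetneq\pol(\theta)$), one forces the structure of $\beta$ to decompose in terms of $\rho$, $\theta$, the auxiliary relations $\rho_{i,\theta}$ and $\rho_J$ introduced at the beginning of this section, and the diagonal relations $D_{\theta_1\theta_2}$ of Definition \ref{def-rel-diagonale}. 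Since $f$ violates $\rho$, a case analysis then shows that $\beta$ cannot use $\rho$ to impose constraints beyond those already carried by $\theta$-invariant data, so $\beta\in\inva(\pol(\theta))$ and Baker--Pixley delivers $C=\pol(\theta)$.

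The main obstacle will be Case III: the weakly $\theta$-closed structure $\rho=\bigcap_{\sigma\in\mathcal{S}_h}(\rho_{l,\theta})_\sigma$ forces the near-unanimity construction to switch behaviour along many coordinate subsets simultaneously while remaining coherent with the transversal of order $l-1$, and it forces the subuniverse classification to track which of the permuted intersectands $(\rho_{l,\theta})_\sigma$ is responsible for any violation. By contrast Case II reduces cleanly to the Rosenberg theory of central clones on the smaller base $E_t$, and Case I is a close adaptation of the binary central argument of \cite{TEM-ROS}.
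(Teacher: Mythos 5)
Your plan coincides with the paper's own proof: the paper likewise establishes the result by exhibiting an $(h+1)$-ary near-unanimity operation in $\pol(\rho)\cap\pol(\theta)$ (Lemma \ref{existence-unanimity-all}), classifying every $h$-ary relation invariant under $\pol(\rho)\cap\pol(\theta)$ as empty, diagonal through $\theta$, equal to $\rho$, or an intersection of relations $(\rho_{l,\theta})_{\sigma}$ (Lemmas \ref{caracterisation-I} and \ref{caracterisation-diagonal-all}), and then applying Theorem \ref{baker-pixley} to $\langle(\pol(\rho)\cap\pol(\theta))\cup\{f\}\rangle$ for an arbitrary $f\in\pol(\theta)\setminus\pol(\rho)$. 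The only caveat is that yours is an outline: the explicit near-unanimity operations for types I--III and the multi-case analysis behind the classification lemma constitute the bulk of the work, and the paper carries them out essentially along the lines you indicate.
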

Before the proof of Proposition \ref{sufficiency-direction}, we give
some results characterizing relations containing
$\text{Pol}(\rho)\cap\text{Pol}(\theta)$ and we show that
$\text{Pol}(\rho)\cap\text{Pol}(\theta)$ contains an
$h$-near-unanimity operation.

For the proof of this proposition we choose a fixed central element
$c$ of $\rho $; we denote by $C_\rho$ the set of all central
elements of $\rho$. If $\rho $ is of type I, choose a central
element $c_B$ from the $\theta$-class $B$ which can be $\min(B\cap
C_\rho)$, and if $\rho $ is of type III and of order $l$, choose a
transversal $T$ of order $l-1$ of the $\theta$-classes and denote by
$T_B$ the element of $T$ representing the $\theta$-class $B$.

We begin with the following lemma characterizing the diagonal
relations through $\theta$.
\begin{lemma}\label{caracterisation-I}
For an equivalence relation $\theta$ on $E_k$ and a diagonal
relation $\tau$ through $\theta$, with arity $h$ on $E_k$, we have
$\pol(\tau)=\pol(\theta)$ or $\pol(\tau)=\mathcal{O}(E_k)$.
\end{lemma}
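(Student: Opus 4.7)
The plan is to case-split according to whether the $\theta$-constraints appearing in the definition of $\tau$ are effective. If $\varepsilon_{2}$ is the equality relation on $\{\min(A_m) : 1\le m\le l\}$, or if $\theta\in\{\Delta_{E_{k}},\nabla_{E_{k}}\}$, then every clause $a_{i}\theta a_{j}$ in the definition of $\tau$ either does not occur, reduces to $a_{i}=a_{j}$, or is vacuous; consequently $\tau$ is cut out purely by equalities between selected coordinates, i.e.\ a classical diagonal relation, and such a conjunction of equalities is preserved by every operation on $E_{k}$. Therefore $\pol(\tau)=\mathcal{O}(E_{k})$ in this case.

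Assume from now on that $\theta$ is a proper nontrivial equivalence relation on $E_{k}$ and that $\varepsilon_{2}$ has a class of cardinality at least two. I claim $\pol(\tau)=\pol(\theta)$. The inclusion $\pol(\theta)\subseteq\pol(\tau)$ is direct: for $f\in\pol(\theta)$ and tuples $(a_{1}^{k},\ldots,a_{h}^{k})\in\tau$ with $k=1,\ldots,n$, applying $f$ coordinatewise preserves the equality clauses automatically and the $\theta$-clauses by the hypothesis on $f$, so $f\in\pol(\tau)$. For the reverse inclusion, fix $(i_{0},j_{0})\in\varepsilon_{2}$ with $i_{0}=\min A_{m}\ne\min A_{m'}=j_{0}$, let $M_{1}$ be the $\varepsilon_{2}$-class containing $i_{0},j_{0}$, and let $M_{2},\ldots,M_{s}$ be the remaining $\varepsilon_{2}$-classes.

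Given $f\in\pol(\tau)$ of arity $n$ and pairs $(a^{k},b^{k})\in\theta$ for $k=1,\ldots,n$, I build tuples $y^{k}=(y_{1}^{k},\ldots,y_{h}^{k})$ by the rule: if $i$ lies in $A_{m'}$ set $y_{i}^{k}=b^{k}$; if $i$ lies in any other $\varepsilon_{1}$-class whose representative belongs to $M_{1}$ (including $A_{m}$ itself) set $y_{i}^{k}=a^{k}$; and if $i$ lies in an $\varepsilon_{1}$-class whose representative belongs to some $M_{r}$ with $r\ge 2$, set $y_{i}^{k}=z_{r}$ for a fixed element $z_{r}\in E_{k}$ depending only on $r$ (not on $k$). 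By construction each $y^{k}$ is constant on every $A_{p}$, so the $\varepsilon_{1}$-constraints hold; within $M_{1}$ all representative values lie in $[a^{k}]_{\theta}=[b^{k}]_{\theta}$ and within $M_{r}$ ($r\ge 2$) they all equal $z_{r}$, so the $\varepsilon_{2}$-constraints hold as well, and hence $y^{k}\in\tau$. Applying $f$ coordinatewise yields a tuple in $\tau$, and reading off positions $i_{0}$ and $j_{0}$ gives $f(a^{1},\ldots,a^{n})\,\theta\,f(b^{1},\ldots,b^{n})$, so $f\in\pol(\theta)$. The main obstacle is the simultaneous satisfaction of \emph{every} $\varepsilon_{1}$- and $\varepsilon_{2}$-clause in the witness tuples $y^{k}$ while still isolating the pair $(i_{0},j_{0})$ so that projecting there recovers $(a^{k},b^{k})$; making each auxiliary $\varepsilon_{2}$-class constant in $k$ is the device that avoids unintended interactions between coordinates.
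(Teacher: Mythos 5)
Your proof is correct and follows essentially the same route as the paper: you split on whether the $\theta$-constraints of $\tau$ are effective (i.e.\ whether $\varepsilon_{2}$ is trivial), prove $\pol(\theta)\subseteq\pol(\tau)$ directly from the clause description, and obtain $\pol(\tau)\subseteq\pol(\theta)$ by exhibiting tuples of $\tau$ whose coordinates $i_{0},j_{0}$ realize an arbitrary pair of $\theta$; the paper compresses this last step into the identity $pr_{uv}(\tau)=\theta$ together with closure of invariant relations under projections, which amounts to exactly your witness construction. No gap.
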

\begin{proof} Let $\theta$ be an equivalence relation on $E_k$ and $\tau=D_{\varepsilon_{1}\varepsilon_{2}}$ be
a diagonal relation through $\theta$. Let $T=\{\min A_m; 1\leq m\leq l\}$ where $A_{m}, 1\leq m\leq l$ are as in Definition $\ref{def-rel-diagonale}$. We
will distinguish two cases: (a) $\varepsilon_{2}\neq \Delta_{T}$ and (b) $\varepsilon_{2}= \Delta_{T}.$
\begin{enumerate}
    \item [(a)]Assume that $\varepsilon_{2}\neq \Delta_{T}.$\\ There exist $u,v\in T$ with $u<v$ such that for all
$(a_1,a_2,\hdots,a_h)\in\tau$, we have $(a_u,a_v)\in\theta$. Using the definition of $\tau$, we have $\pol(\theta)\subseteq \pol(\tau).$ By setting
$$pr_{uv}(\tau):=\{(e_{u}^{h}(\textbf{a}),e_{v}^{h}(\textbf{a})); \textbf{a}=(a_1,a_2,\ldots,a_h)\in\tau\},$$ it follows that $pr_{uv}(\tau)=\theta$, therefore $\pol(\tau)\subseteq
\pol(\theta)$, and it appears that $\pol(\tau)=\pol(\theta)$.
    \item [(b)]Assume that $\varepsilon_{2}= \Delta_{T}.$\\ It is clear that
    $$\tau=D_{\varepsilon_{1}\Delta_{T}}=\left\{\left(a_{1},a_{2},\hdots, a_{h} \right)\in E_k^{h}/ i\varepsilon_{1}j\Rightarrow a_{i}=a_{j}\right\}.$$
    Hence
 $\pol(\tau)=\mathcal{O}(E_k).$
\end{enumerate}
\end{proof}
\begin{lemma}\label{caracterisation-diagonal-all}
Under the assumptions of Proposition \ref{sufficiency-direction}, we have:
\begin{enumerate}
    \item [(a)] $\eta\subseteq \rho$;
    \item [(b)] If $\rho $ is of type I or II, then an $h$-ary relation $\tau $ on $E_k$ is preserved by every operation in $\text{Pol}(\rho)\cap\text{Pol}(\theta)$
    if and only if $\tau $ is either the empty relation, a diagonal
relation through $\theta$,  or the relation $\rho$;
    \item [(c)] If $\rho $ is of type III and of order $l$, then an $h$-ary relation $\tau $ on $E_k$ is preserved by  every operation in $\text{Pol}(\rho)\cap\text{Pol}(\theta)$
    if and only if $\tau $ is either the empty relation, a diagonal
relation through $\theta$, or an intersection of relations of the
form $(\rho_{l,\theta})_{\sigma}$ with $\sigma\in\mathcal{S}_{h}$.
\end{enumerate}
\end{lemma}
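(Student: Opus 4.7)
The statement breaks into three parts; part (a) is a case check and parts (b) and (c) are structural characterisations with an easy forward direction and a substantial backward direction.

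For part (a) I would argue by cases on the type of $\rho$. When $\rho$ is of type I or III, the inclusion $\eta\subseteq\rho$ is part of the hypothesis. When $\rho$ is of type II we have $\rho=\rho_{0,\theta}$; given $(u_1,\ldots,u_h)\in\eta$, total reflexivity of $\rho$ yields $(u_1,u_1,u_3,\ldots,u_h)\in\rho$, and since $u_2\in[u_1]_{\theta}=[u_2]_{\theta}$, the definition of $\rho_{0,\theta}$ allows us to replace the second coordinate to obtain $(u_1,u_2,u_3,\ldots,u_h)\in\rho_{0,\theta}=\rho$.

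For the ``if'' direction of (b) and (c) the empty relation and $\rho$ are trivially preserved. Any diagonal relation $\tau$ through $\theta$ satisfies $\pol(\rho)\cap\pol(\theta)\subseteq\pol(\theta)\subseteq\pol(\tau)$ by Lemma \ref{caracterisation-I}. For (c) I would additionally verify, by unwinding definitions, that each $(\rho_{l,\theta})_{\sigma}$ is preserved: if $(a_{1,i},\ldots,a_{h,i})\in\rho_{l,\theta}$ for $i=1,\ldots,n$ with witnesses $u_{j,i}\in[a_{j,i}]_{\theta}$ for $j>l$ making $(a_{1,i},\ldots,a_{l,i},u_{l+1,i},\ldots,u_{h,i})\in\rho$, then applying any $f\in\pol(\rho)\cap\pol(\theta)$ componentwise produces new witnesses $f(u_{j,1},\ldots,u_{j,n})$ in the correct $\theta$-classes (as $f\in\pol(\theta)$) while keeping the truncated tuple in $\rho$ (as $f\in\pol(\rho)$); coordinate permutations and intersections commute with this argument.

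For the backward direction, which is the main work, suppose $\tau$ is nonempty and preserved by every operation in $\pol(\rho)\cap\pol(\theta)$. I would first identify the finest equivalence $\varepsilon_1$ on $\{1,\ldots,h\}$ such that $a_i=a_j$ holds on every tuple of $\tau$ whenever $i\,\varepsilon_1\,j$, together with the finest equivalence $\varepsilon_2$ on the minima of the $\varepsilon_1$-classes forcing $\theta$-equivalence; by construction $\tau\subseteq D_{\varepsilon_1\varepsilon_2}$. If equality holds, $\tau$ is diagonal and we are done. Otherwise, picking a tuple in $\tau$ that witnesses the failure of every nonforced equality and every nonforced $\theta$-relation, I would build unary collapsing operations in $\pol(\rho)\cap\pol(\theta)$ using the chosen central elements $c_B$ (types I and II) or transversal elements $T_B$ (type III) to map tuples of $\tau$ onto arbitrary tuples of $\rho$, respectively of the relevant intersection of $(\rho_{l,\theta})_{\sigma}$. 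Combining these with part (a) and total symmetry of $\rho$, two inclusions pin $\tau$ down: in types I and II the generated tuples, together with those already in $\tau$, force $\tau=\rho$; in type III the same argument identifies $\tau$ with the intersection of those $(\rho_{l,\theta})_{\sigma}$ whose coordinate pattern is realised in $\tau$.

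The main obstacle is producing the collapsing operations used in the backward direction while keeping them inside $\pol(\rho)\cap\pol(\theta)$. Preservation of $\theta$ is straightforward because each $\theta$-class can be collapsed to any chosen representative, but preservation of $\rho$ is delicate and leans on the defining hypothesis of the type---existence of a central element of $\rho$ in every $\theta$-class for type I, the equality $\rho=\rho_{0,\theta}$ for type II, or the order-$(l{-}1)$ transversal of Definition \ref{def-theta-transversal} for type III. The type III case is the most involved because the target is an intersection of relations $(\rho_{l,\theta})_{\sigma}$ whose index set of permutations $\sigma$ depends on the combinatorics of coordinates constrained by $\tau$, and this bookkeeping has to be matched exactly by the images of the collapsing operations.
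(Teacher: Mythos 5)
Your overall strategy is the one the paper uses: define the two equivalence relations $\epsilon_1$ (forced equalities) and $\epsilon_2$ (forced $\theta$-relations) extracted from $\tau$, observe $\tau\subseteq D_{\epsilon_1\epsilon_2}$, and in the non-diagonal case use collapsing operations built from the central elements $c_B$ (types I, II) or the transversal elements $T_B$ (type III) to show that $\tau$ absorbs $\rho$, respectively the relevant intersection of the $(\rho_{l,\theta})_{\sigma}$. Parts (a) and the forward directions of (b) and (c) are handled exactly as in the paper. (One terminological slip: you want the \emph{coarsest} equivalences with the stated properties, not the finest; the finest is always $\Delta$ and carries no information.)

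There is, however, one step that would fail as written: ``picking a tuple in $\tau$ that witnesses the failure of every nonforced equality and every nonforced $\theta$-relation.'' No single tuple need exist with this property --- $\tau$ may contain $(0,0,1)$ and $(0,1,1)$, forcing $\epsilon_1=\Delta$, without containing any tuple whose coordinates are pairwise distinct. Consequently a \emph{unary} collapsing operation applied to one tuple of $\tau$ cannot in general reach an arbitrary tuple of $D_{\epsilon_1\epsilon_2}$ or of $\rho$: a unary map cannot separate coordinates that coincide in the source tuple. The paper's fix is essential to the argument: for each pair $(i,j)$ outside $\epsilon_1$ (resp.\ $\epsilon_2$) choose its own witness tuple $\mathbf{a}^{ij}\in\tau$, assemble the family $B=\{\mathbf{b}^1,\ldots,\mathbf{b}^q\}$, form the column vectors $\mathbf{x}_i=(b^1_i,\ldots,b^q_i)$ --- which are now pairwise distinct (resp.\ pairwise $\theta$-inequivalent) precisely because each failing pair has its own witness --- and define a $q$-ary operation $f$ with $f(\mathbf{x}_i)=u_i$, extended by central/transversal elements so as to preserve $\rho$ and $\theta$. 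Applying $f$ to the matrix whose rows are the $\mathbf{b}^r$ then lands the target tuple in $\tau$. Unary operations do suffice, and are used in the paper, only at the later stage where one already has a single tuple $(a_1,\ldots,a_h)\in\tau\setminus\rho$, whose coordinates automatically lie in pairwise distinct $\theta$-classes because $\eta\subseteq\rho$ and $\rho$ is totally reflexive. Your sketch conflates these two stages; once the multi-witness, higher-arity construction is inserted for the first stage, the rest of your outline matches the paper's proof.
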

\begin{proof}
\begin{enumerate}
    \item [(a)] The proof of $\eta\subseteq \rho$ is straightforward.
    \item [(b)]  Assume $\tau $ is either the empty relation, or a diagonal
relation through $\theta$. Then $\pol(\tau)\in\{\pol(\theta);\mathcal{O}(E_k)\}$ (see Lemma \ref{caracterisation-I}); hence $
\pol(\rho)\cap\pol(\theta)\subseteq\pol(\tau)$. Assume $\tau$ is of the form $(\rho_{l,\theta})_{\sigma}$. Then it is easy to see that $
\pol(\rho)\cap\pol(\theta)\subseteq\pol((\rho_{l,\theta})_{\sigma})$. Hence, if $\tau$ is an intersection of relations of the form
$(\rho_{l,\theta})_{\sigma}$ then $\pol(\rho)\cap\pol(\theta)\subseteq\pol(\tau)$.

Conversely, assume $\tau$ is preserved by all operations in $\pol(\rho)\cap\pol(\theta)$.

Let us suppose that $\tau$ is not the empty relation. We have to prove that $\tau$ is  either a diagonal relation through $\theta$, or an intersection of
relations of the form $(\rho_{l,\theta})_{\sigma}$ with $l$ the order of $\rho$ and $\sigma\in\mathcal{S}_{h}$. \\
For this purpose, we define two equivalence relations. The first one denoted by $\epsilon_{1}$ is defined on $\{1;2; \ldots ;h\}$ by:
$$i\epsilon_{1}j\text{ iff }\forall \left(a_1,  a_2, \ldots, a_h\right)\in \tau, a_i=a_j.$$ $\epsilon_{1}$ is an equivalence relation with classes $A_{0}, A_{1},\ldots,
A_{m}$. The second one, denoted by  $\epsilon_{2}$ is defined on $T=\{\min(A_i); 0\leq i\leq m\}$ by:  $$i\epsilon_{2}j \text{ iff }\forall \left(a_1, a_2,
\ldots, a_h\right)\in \tau,(a_i,a_j)\in\theta.$$ It follows that $D_{\epsilon_{1}\epsilon_{2}}$ is a diagonal relation through $\theta$. In order to
complete the proof of this lemma, we distinguish two cases: $(\epsilon_{1}\neq\Delta_{\{1;2; \ldots ;h\}} \text{ or } \epsilon_{2}\neq\Delta_{T})$ and
$(\epsilon_{1}=\Delta_{\{1;2; \ldots ;h\}} \text{ and } \epsilon_{2} = \Delta_{T})$.
\begin{enumerate}
    \item [(i)] We suppose that $\epsilon_{1}\neq\Delta_{\{1;2; \ldots ;h\}} \text{ or } \epsilon_{2}\neq\Delta_{T}.$ We will prove that
$D_{\epsilon_{1}\epsilon_{2}}=\tau$. It suffices to show that $D_{\epsilon_{1}\epsilon_{2}}\subseteq\tau$; $\tau\subseteq D_{\epsilon_{1}\epsilon_{2}}$ by
definitions of $\epsilon_{1}$ and $\epsilon_{2}$.  We need only consider three subcases: $(\epsilon_{1}=\nabla_{\{1;2; \ldots ;h\}})$,
$(\epsilon_{1}\neq\nabla_{\{1;2; \ldots ;h\}} \text{ and } \epsilon_{2}=\nabla_{T})$, and $(\epsilon_{1}\neq\nabla_{\{1;2; \ldots ;h\}} \text{ and }
\epsilon_{2}\neq\nabla_{T})$.
\begin{enumerate}
    \item [a)] If $\epsilon_{1}=\nabla_{\{1;2; \ldots ;h\}},$ then $D_{\epsilon_{1}\epsilon_{2}}=\{(x,...,x); x\in E_{k}\}$. Since
    $\tau\subseteq D_{\epsilon_{1}\epsilon_{2}}$ and $\tau$ is not the empty relation, for each $b\in E_k$ the constant function of value $b$
    preserves $\theta$ and $\rho$; hence $(b,...,b)\in \tau$ and $\tau=D_{\epsilon_{1}\epsilon_{2}}.$
    \item [b)] If $\epsilon_{1}\neq\nabla_{\{1; \ldots ;h\}} \text{ and }
\epsilon_{2}=\nabla_{T}$, then there exists $(i,j)\in \nabla_{\{1; \ldots ;h\}}$ such that $(i,j)\notin\epsilon_{1}.$ For all $1\leq i,j\leq h$ such that
$(i,j)\notin\epsilon_{1}$, we choose $\mathbf{a}^{ij}=(a^{ij}_{1},...,a^{ij}_{h})\in\tau$ such that $a^{ij}_{i}\neq a^{ij}_{j}$ and we set
$B=\{\mathbf{a}^{ij}; (i,j)\notin \epsilon_{1}\}$. Let $q=|B|$; to simplify our notation, we suppose that $B=\{\mathbf{b}^1;\mathbf{b}^2; \ldots
;\mathbf{b}^q\}$ and we define the sequence $(\mathbf{x}_i)_{1\leq i\leq h}$ by $\mathbf{x}_i=(b^1_{i};b^2_{i}; \ldots ;b^q_{i})$. It is easy to see that:
$(i,j)\notin \epsilon_{1}\Rightarrow \mathbf{x}_i\neq \mathbf{x}_j$. Let $(u_1,...u_h)\in D_{\epsilon_{1}\epsilon_{2}}$, we consider the $q$-ary operation
defined on $E_k$ by:
$$f(\mathbf{y})=  \left\{\begin{array}{ll}
                                                 u_l &\text{ if } \mathbf{y}=\mathbf{x}_l\in\{\mathbf{x}_1;\mathbf{x}_2; \ldots ;\mathbf{x}_h\} \\
                                                 u_1 &\text{ otherwise }.
                                               \end{array}
      \right.
 $$ Since $\{u_1,u_2,...u_h\}^{2}\subseteq \theta$ and $\,\eta\subseteq \rho$, $f\in\pol(\theta)\cap\pol(\rho)$. Therefore
$f\in\pol(\tau)$ and $(u_1,...,u_h)\in\tau$.
    \item [c)] If $\epsilon_{1}\neq\nabla_{\{1;2; \ldots ;h\}} \text{ and } \epsilon_{2}\neq\nabla_{T}$, then there exists
    $(i,j)\in\nabla_{T}$ such that $(i,j)\notin\epsilon_{2}$.
     For all $1\leq i,j\leq h$  such that $(i,j)\notin
\epsilon_{2}$  we choose  $ \mathbf{a}^{ij}=\left(a^{ij}_1,  a^{ij}_2, \ldots, a^{ij}_h \right)\in \tau$ such that $(a^{ij}_i,a^{ij}_j)\notin \theta$ and
we consider the set $B=\{\mathbf{a}^{ij}, (i,j)\notin \epsilon_{2}\}$. We set $q=|B|$. To simplify our notation  we set $B=\{\mathbf{b}^1;\mathbf{b}^2;
\ldots ;\mathbf{b}^q\}$ which allows us to define  $\mathbf{x}_1=(b^1_1,b^2_1, \ldots ,b^q_1)$, $\mathbf{x}_2=(b^1_2,b^2_2, \ldots ,b^q_2)$, \ldots ,
$\mathbf{x}_s=(b^1_s,b^2_s, \ldots ,b^q_s)$. We remark that $(i,j)\notin \epsilon_{2}\Rightarrow (\mathbf{x}_i,\mathbf{x}_j)\notin \theta$.\\
Let $\left(u_1, u_2, \ldots,  u_h\right)\in D_{\epsilon_{1}\epsilon_{2}}$  and consider the $q$-ary operation defined on $E_k$  by:

$$ f(\mathbf{y})=  \left\{\begin{array}{ll}
                                                 u_l &\text{ if } \mathbf{y}=\mathbf{x}_l \\
                                                 u_{\sigma(\mathbf{y})} &\text{ if } \mathbf{y}\notin\{\mathbf{x}_1;\mathbf{x}_2; \ldots ;\mathbf{x}_h\}, \exists l\in\{1,2, \ldots ,h\}/ \mathbf{y}\theta \mathbf{x}_l \text{ and}\\%
                                                 &\,\,\,\,\,\,\,\,\,\,\,\,\sigma(\mathbf{y})=min\{t / \mathbf{y} \theta \mathbf{x}_{t}\} \\
                                                 c &\text{ otherwise, }
                                               \end{array}
      \right.
$$ where $c$ is a central element. Since $\rho$ is totally reflexive and $D_{\epsilon_{1}\epsilon_{2}}\subseteq \rho$,  it follows that
$f\in \pol(\rho)$. By  the definition of $f$, we have $f\in \pol(\theta)$, thus $f\in \pol(\tau)$. Since $\mathbf{b}^1,..,\mathbf{b}^q\in \tau$, it follows
that $f(\mathbf{b}^1,\mathbf{b}^2, \ldots ,\mathbf{b}^q)=\left(f(\mathbf{x}_1), f(\mathbf{x}_2), \ldots, f(\mathbf{x}_h)\right)=\left(u_1,  u_2, \ldots,
u_h\right)\in \tau$, therefore $D_{\epsilon_{1}\epsilon_{2}}\subseteq \tau.$
\end{enumerate}
    \item [(ii)] We suppose that $\epsilon_{1}=\Delta_{\{1;2; \ldots ;h\}} \text{ and }
\epsilon_{2} = \Delta_{T}$.  We show that $\tau$ is  an intersection of relations of the form $(\rho_{l,\theta})_{\sigma}$ or the relation
$E_{k}^{h}$.\\
   For all $1\leq i,j\leq h$  such that  $i \neq j$, choose  $\mathbf{a}^{ij}=\left(a^{ij}_1,\ldots, a^{ij}_s\right)\in\tau$ such that
   $(a^{ij}_i,a^{ij}_j)\notin \theta$ and consider the set $B=\{\mathbf{a}^{ij}, (i,j)\notin \epsilon_{2}\}$. Using similar notation as in part (c) of (i) and the same
   $q$-ary operation  $f$  on  $E_k$ for a given $\left(u_1, u_2, \ldots, u_h\right)\in \rho $, we have
$$f(\mathbf{b}^1,\mathbf{b}^2, \ldots ,\mathbf{b}^q)=\left(f(\mathbf{x}_1), f(\mathbf{x}_2), \ldots, f(\mathbf{x}_h)\right)=\left(u_1, u_2,\ldots, u_h\right)\in \tau ,$$ and then $\rho\subseteq \tau$.
By the definition of $(\rho_{l,\theta})_{\sigma}$ with $l$ the order of $\rho$ and $\sigma\in\mathcal{S}_{h}$, we have: $\rho\subseteq
(\rho_{l,\theta})_{\sigma}$ for all $\sigma\in\mathcal{S}_{h}.$
 We distinguish once more two subcases: $\rho=\tau$ or $\rho\neq\tau$.
\begin{enumerate}
    \item [a)]If $\rho=\tau$, then it is finished.
    \item [b)]Otherwise $\rho\varsubsetneq\tau\subseteq E_{k}^{h}.$

If $\rho$ is of type I or II, then we will show that $\tau= E_{k}^{h}$. \\
As $\rho\varsubsetneq\tau$ then $\tau\setminus\rho\neq\emptyset$. Let us consider $\left(a_1,a_2,\ldots,a_h\right)\in\tau\setminus\rho$. Let $\left(u_1,
u_2, \ldots, u_h\right)\in E_{k}^{h}$; assume $\rho$ is of type I and consider the unary operation $f$  defined on  $E_k$  by:$$f(x)=
\left\{\begin{array}{l}
                                                 u_i \text{ if } x= a_i \\
                                                 c_{[a_i]_{\theta}} \text{ if } x\theta a_i \text{ and } x\neq a_i\ \\
                                                 c \text{ otherwise,}
                                               \end{array}
      \right. $$ where $c$ is a central element.
$f\in \pol(\rho)$ because $ \eta\subseteq\rho$ and $\rho$ is totally symmetric. Moreover $f\in \pol(\theta)$, then $f\in \pol(\tau)$
 and  $$\left(u_1, u_2, \ldots, u_h\right)=\left(f(a_1), f(a_2), \ldots, f(a_h)\right)\in\tau.$$
Assume $\rho$ is of type II and consider the unary operation $f$  defined on  $E_k$  by:$$f(x)=  \left\{\begin{array}{l}
                                                 u_i \text{ if } x\theta a_i \\
                                                 c \text{ otherwise }
                                               \end{array}
      \right. $$ where $c$ is a central element.
$f\in \pol(\rho)$ because $\rho$ is $\theta$-closed. Moreover $f\in \pol(\theta)$, then $f\in \pol(\tau)$
 and $\left(u_1, u_2, \ldots, u_h\right)=\left(f(a_1), f(a_2), \ldots, f(a_h)\right)\in\tau$. Thus $\tau=E_{k}^{h}=D_{\Delta_{\{1;\cdots;h\}}\Delta_{\{1;\cdots;h\}}}.$

Let's suppose that $\rho$ is of type III and of order $l$, i.e.,
$\rho$
is weakly $\theta$-closed of order $l$ and $\eta\subseteq\rho$.\\
Since $\tau\setminus\rho$ is not empty, there exists $\left(a_1,
a_2, \ldots, a_h\right)\in(\tau\setminus\rho).$ Therefore
$\left(a_1, a_2, \ldots,
a_h\right)\notin\underset{\sigma\in\mathcal{S}_{h}}\bigcap(\rho_{l,\theta})_{\sigma}$.
We suppose that there is $\sigma'\in\mathcal{S}_{h}$ such that
$(a_1,a_2,\ldots,a_h)\in(\rho_{l,\theta})_{\sigma'}.$ We consider
the set $R_{1}:=\{\sigma'\in\mathcal{S}_{h}/
\,\,(a_1,a_2,\ldots,a_h)\in(\rho_{l,\theta})_{\sigma'}\}$ and define
the relation $\varphi$ by
$$\varphi=\underset{\sigma'\in R_1}\bigcap(\rho_{l,\theta})_{\sigma'}.$$ We have $\rho\varsubsetneq \varphi$.

We will show that $\varphi\subseteq \tau$. Let $(u_1,...u_h)\in\varphi$ and set
\begin{eqnarray*}
  D=\{(b_1,\ldots,b_h)\in E_{k}^{h}; b_i\in\{u_1,...,u_h\}\cup\{c,T_{[u_{1}]_{\theta}},\dots, T_{[u_{h}]_{\theta}}\}, 1\leq i\leq h, \\
   \text{ and } \{b_1,...b_s\}\neq\{u_1,...,u_h\}\}.
\end{eqnarray*}
We define the unary operation $h$ on $E_k$ by:
$$h(x)=  \left\{\begin{array}{l}
                                                 u_i \text{ if } x= a_i \\
                                                 T_{[u_i]_{\theta}} \text{ if } x\in [a_i]_{\theta}\setminus\{a_i\} \\
                                                 c \text{ otherwise }
                                               \end{array}
      \right. $$
For $(y_1,...,y_h)\in\rho$, we have $(h(y_1),...,h(y_h))\in D\cap\rho\subseteq\rho$. Therefore $h\in \pol(\rho)\cap\pol(\theta)$. Hence $h\in \pol(\tau)$
and $(u_1,...,u_h)=(h(a_1),...,h(a_h))\in\tau.$\\
If $\tau=\varphi$, then it is finished. Otherwise, we have $\varphi\varsubsetneq\tau$.

There exists $(a_1,a_2,\ldots,a_h)\in \tau\setminus\varphi$. If
there exists $s\in\mathcal{S}_{h}$ such that
$(a_1,a_2,\ldots,a_h)\in (\rho_{l,\theta})_{s}$, then we use the
same argument to construct $\varphi'$ such that
$\varphi\varsubsetneq\varphi'$ and $\varphi'\subseteq \tau$.
Therefore $\tau=\varphi'$ or $\varphi'\varsubsetneq\tau$. So we have
the same conclusion as above. We continue this process until
obtained a $h$-tuple $(a_1, a_2, \ldots, a_h)\in\tau$ such that, for
each $\sigma\in\mathcal{S}_{h}$,  $(a_1, a_2, \ldots, a_h)\notin
(\rho_{l,\theta})_{\sigma}$. Let $(u_1,...,u_h)\in E_{k}^{h}$, using
the unary operation $h$ defined above and the fact that $\rho$ is
weakly $\theta$-closed of order $l$ and there is a transversal $T$
of order $l-1$ for the $\theta$-classes, we show that
$(u_1,...,u_h)\in \tau.$ Therefore $\tau=E_{k}^{h}$.
\end{enumerate}
\end{enumerate}
\end{enumerate}
\end{proof}
\begin{lemma}\label{existence-unanimity-all}
Under the assumptions of Proposition \ref{sufficiency-direction}, $\pol(\theta)\cap \pol(\rho)$ contains an $h$-near-unanimity operation.
\end{lemma}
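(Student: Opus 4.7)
The plan is to exhibit, in each of the three cases I, II, III, an explicit $h$-ary near-unanimity operation $f\in\pol(\theta)\cap\pol(\rho)$. Combined with Lemma~\ref{caracterisation-diagonal-all}, this supplies both hypotheses required by Baker--Pixley (Theorem~\ref{baker-pixley}) and therefore completes the proof of Proposition~\ref{sufficiency-direction}.

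Case II is the smoothest. Since $\rho$ is $\theta$-closed, $\rho=\varphi^{-1}(\gamma)$ for an $h$-ary central relation $\gamma$ on $E_{t}$. First fix an $h$-ary near-unanimity operation $g$ on $E_{t}$ preserving $\gamma$ (the standard majority-or-centre construction does the job, as $\gamma$ is totally reflexive and totally symmetric with nonempty centre). Then lift $g$ to $E_{k}$ by
\[
f(x_{1},\ldots,x_{h})=
\begin{cases}
x, & \text{if at least $h-1$ of the $x_{i}$ equal $x$,}\\
\psi\bigl(g(\varphi(x_{1}),\ldots,\varphi(x_{h}))\bigr), & \text{otherwise,}
\end{cases}
\]
where $\psi:E_{t}\to E_{k}$ is any section of $\varphi$. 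The near-unanimity identities hold by the first clause; the second clause is constant on each $\theta$-class of inputs, giving $f\in\pol(\theta)$; and $f\in\pol(\rho)$ reduces, after applying $\varphi$ coordinatewise, to $g\in\pol(\gamma)$.

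For Cases I and III the decisive hypothesis is $\eta\subseteq\rho$: by total symmetry this says that every $h$-tuple whose coordinates occupy at most $h-1$ $\theta$-classes already lies in $\rho$. Fix a central element $c$ and, for every $\theta$-class $B$, a distinguished representative $d_{B}\in B$, namely $d_{B}=c_{B}$ in Case I and $d_{B}=T_{B}$ in Case III. I then define $f(x_{1},\ldots,x_{h})=x$ when at least $h-1$ of the inputs equal $x$; $f(x_{1},\ldots,x_{h})=d_{B}$ when no single value dominates but at least $h-1$ of the inputs share one $\theta$-class $B$; and $f(x_{1},\ldots,x_{h})=c$ otherwise. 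Preservation of $\theta$ is immediate. For preservation of $\rho$, given rows $(a_{j,1},\ldots,a_{j,h})$ such that each column lies in $\rho$, the output row $(b_{1},\ldots,b_{h})$ falls into one of three patterns: some $b_{j}=c$ (then $(b_{1},\ldots,b_{h})\in\rho$ by centrality and total symmetry); the coordinates of $(b_{1},\ldots,b_{h})$ meet at most $h-1$ $\theta$-classes (then $(b_{1},\ldots,b_{h})\in\rho$ by $\eta\subseteq\rho$); or every $b_{j}$ is an exact majority of its row, and a pigeonhole argument on the singleton sets $E_{j}:=\{i : a_{j,i}\neq b_{j}\}$ either produces a repetition in $(b_{1},\ldots,b_{h})$ (total reflexivity) or exhibits an input column coinciding with $(b_{1},\ldots,b_{h})$.

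The main obstacle is Case III, because $\rho=\bigcap_{\sigma\in\mathcal{S}_{h}}(\rho_{l,\theta})_{\sigma}$, so $f\in\pol(\rho)$ must in fact be verified against every permuted slice $(\rho_{l,\theta})_{\sigma}$ separately. The purpose of the transversal $T$ of order $l-1$ is precisely to absorb the residual pigeonhole case: when the output is forced to fix $h-l$ coordinates inside one $\theta$-class, the choice $d_{B}=T_{B}$ places those coordinates inside the transversal slab on which each $(\rho_{l,\theta})_{\sigma}$ is automatically saturated. The technical care lies in checking that the different $T_{B}$'s, for different $\theta$-classes, interact consistently across all $\sigma\in\mathcal{S}_{h}$; this case analysis is where the bulk of the work will be concentrated.
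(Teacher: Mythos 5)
There is a fatal gap: your near-unanimity operations have the wrong arity. You propose $h$-ary operations whose first clause reads ``$f(x_{1},\ldots,x_{h})=x$ when at least $h-1$ of the inputs equal $x$''. But no $h$-ary near-unanimity operation can preserve a totally reflexive $h$-ary relation $\rho\varsubsetneq E_{k}^{h}$. Indeed, pick $(a_{1},\ldots,a_{h})\notin\rho$ (its entries are pairwise distinct, since $\tau_{h}^{E_{k}}\subseteq\rho$) and form the $h\times h$ matrix with $m_{ij}=a_{i}$ for $i\neq j$ and $m_{ii}=a_{i'}$ for some $i'\neq i$. Each row $i$ has $h-1$ entries equal to $a_{i}$, so your first clause forces $f(\text{row }i)=a_{i}$; each column $j$ contains the entry $a_{j'}$ twice, hence lies in $\tau_{h}^{E_{k}}\subseteq\rho$; yet the output tuple is $(a_{1},\ldots,a_{h})\notin\rho$. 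This is exactly the case your pigeonhole sketch overlooks: when the deviation sets $E_{j}$ are $h$ pairwise distinct singletons, every input column agrees with $(b_{1},\ldots,b_{h})$ in all but one position, which neither forces a repetition in the output nor produces a column equal to it. The same obstruction already kills your Case II on the quotient: no $h$-ary near-unanimity operation $g$ on $E_{t}$ can preserve the $h$-ary central relation $\gamma$, so the ``standard majority-or-centre construction'' you invoke does not exist at arity $h$.

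The paper's proof avoids this by constructing $(h+1)$-ary operations $m(x_{1},\ldots,x_{h+1})$ whose first clause requires $h$ of the $h+1$ arguments to coincide, and whose second clause requires $h$ of them to be $\theta$-equivalent (outputting $c_{B}$, $x_{i_{1}}$, or $T_{B}$ according to whether $\rho$ is of type I, II, or III), with the central element $c$ as default. With $h$ rows and $h+1$ columns, at most $h$ columns can carry a deviation, so in the critical case some column equals the output tuple and membership in $\rho$ follows; the remaining cases are absorbed by centrality, by $\eta\subseteq\rho$, by $\theta$-closedness, or by the transversal. Note that the phrase ``$h$-near-unanimity operation'' in the lemma is the paper's shorthand for a near-unanimity operation of arity $h+1$ --- precisely the arity needed to obtain $\mathrm{Inv}^{(h)}$ in Theorem \ref{baker-pixley} --- and reading it as arity $h$ is what derails your construction from the outset.
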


\begin{proof}
If $\rho$ is of type I,  let us consider the $(s+1)$-ary operation $m$ defined on $E_k$ by:
$$ m(x_1, \ldots ,x_h,x_{h+1})=  \left\{\begin{array}{ll}
                                                 x_{i_1}& \text{ if there exist }  1\leq i_1 < \ldots <i_h\leq h+1 \\ & \,\, \text{ such that } x_{i_1}=x_{i_2}= \ldots =x_{i_s} \\
                                                 c_{[x_{i_1}]_{\theta}}& \text{ if there exist }  1\leq i_1 < \ldots <i_h\leq h+1 \\ & \,\, \text{ such that }  [x_{i_1}]_{\theta}=[x_{i_2}]_{\theta}= \ldots =[x_{i_s}]_{\theta} \\
                                                 c & \text{ otherwise;}
                                               \end{array}
      \right.$$

if $\rho$ is of type II,  let us consider the $(s+1)$-ary operation $m$ defined on $E_k$ by:
$$ m(x_1, \ldots ,x_h,x_{h+1})=  \left\{\begin{array}{ll}
                                                 x_{i_1}& \text{ if there exist }  1\leq i_1 < \ldots <i_h\leq h+1 \\ & \,\, \text{ such that } x_{i_1}=x_{i_2}= \ldots =x_{i_s} \\
                                                 x_{i_1}& \text{ if there exist }  1\leq i_1 < \ldots <i_h\leq h+1 \\ & \,\, \text{ such that }  [x_{i_1}]_{\theta}=[x_{i_2}]_{\theta}= \ldots =[x_{i_h}]_{\theta} \\
                                                 c & \text{ otherwise;}
                                               \end{array}
      \right.$$

if $\rho$ is of type III,  let us consider the $(s+1)$-ary operation $m$ defined on $E_k$ by:
$$ m(x_1, \ldots ,x_h,x_{h+1})=  \left\{\begin{array}{ll}
                                                 x_{i_1}& \text{ if there exist }  1\leq i_1 < \ldots <i_h\leq h+1 \\ & \,\, \text{ such that } x_{i_1}=x_{i_2}= \ldots =x_{i_h} \\
                                                 T_{[x_{i_1}]_{\theta}}& \text{ if there exist }  1\leq i_1 < \ldots <i_h\leq h+1 \\ & \,\, \text{ such that }  [x_{i_1}]_{\theta}=[x_{i_2}]_{\theta}= \ldots =[x_{i_h}]_{\theta} \\
                                                 c & \text{ otherwise;}
                                               \end{array}
      \right.$$ where $c$ is a central element.
By definition, $m$ is a near unanimity function of order $h+1$. We will show that $m\in \pol(\rho)\cap \pol(\theta)$. To do this, we will prove firstly
that $m\in \pol(\theta)$ and secondly that $m\in \pol(\rho).$
\begin{enumerate}
    \item [a.] To see that $m$ preserves $\theta$, assume that $\left( u_i ,v_i \right)\in\theta$ for $1\leq i \leq s+1.$ Since
$$|\{[u_{1}]_{\theta};[u_2]_{\theta}; \ldots ;[u_{h+1}]_{\theta}\}|=|\{[v_1]_{\theta};[v_2]_{\theta}; \ldots ;[v_{h+1}]_{\theta}\}|,$$ if there exist $1\leq  i_1< i_2<..<i_h\leq h+1$ such that
$[u_{i_1}]_{\theta}=[u_{i_2}]_{\theta}= \ldots =[u_{i_h}]_{\theta}$, then $m(u_1,u_2, \ldots ,u_{h+1})\in [u_{i_1}]_{\theta}= [v_{i_1}]_{\theta}$ because $[u_{i}]_{\theta}=[v_{i}]_{\theta}$ for $i = 1, 2,..,h+1$.\\
Therefore, $(m(u_1,u_2, \ldots ,u_{h+1}),m(v_1,v_2, \ldots ,v_{h+1}))\in [u_{i_1}]_{\theta}^{2}\subseteq\theta$. In the other case we have $(m(u_1,u_2,
\ldots ,u_{h+1}),m(v_1,v_2, \ldots ,v_{h+1}))=\left( c ,c \right)\in\theta$; then $m\in \pol(\theta)$.
    \item [b.] Let us prove now that $m$ preserves $\rho$.\\
Let $\left(u_{11}, u_{21},\ldots, u_{h1}\right)$,$\left(u_{12}, u_{22}, \ldots, u_{h2}\right)$,\ldots,$\left(u_{1 h+1}, u_{2 h+1}, \ldots, u_{h
h+1}\right)$ $\in\rho.$ \\ If $\{m(u_{i1},u_{i2}, \ldots ,u_{i h+1}); \ 1\leq i \leq h\}$ contains a central element of $\rho$  then
$$\left(m(u_{11},u_{12}, \ldots ,u_{1 h+1}), \ldots, m(u_{h1},u_{h2}, \ldots ,u_{h h+1})\right)\in\rho;$$
else we will distinguish the three type of $\rho$.\\
Suppose $\rho$ be of type I, there exist $1\leq i_{r}^{1}<i_{r}^{2}< \ldots <i_{r}^{h}\leq h+1, \ 1\leq r \leq h$ such that $u_{r i_{r}^{1}}=u_{r
i_{r}^{2}}= \ldots =u_{r i_{r}^{h}}$ for $r\in\{1;\cdots;h\}$; as $\{i_{1}^{1};i_{1}^{2}; \ldots ;i_{1}^{h}\}\cap\{i_{2}^{1};i_{2}^{2}; \ldots
;i_{2}^{h}\}\cap, \ldots ,\cap\{i_{h}^{1};i_{h}^{2}; \ldots ;i_{h}^{h}\}\neq \emptyset$, let us consider a fixed element
 $i\in\{i_{1}^{1};i_{1}^{2}; \ldots ;i_{1}^{h}\}\cap\{i_{2}^{1};i_{2}^{2}; \ldots ;i_{2}^{h}\}\cap, \ldots ,\cap\{i_{h}^{1};i_{h}^{2}; \ldots ;i_{h}^{h}\}$.
 We have
 $$( m(u_{11},u_{12}, \ldots ,u_{1 h+1}), \ldots ,m(u_{h1},u_{h2}, \ldots ,u_{h h+1}))=(u_{1i},...,u_{hi})\in\rho.$$
 Therefore $m\in \pol(\rho)$.\\
If $\rho$ is of type II there exist $1\leq i_{r}^{1}<i_{r}^{2}< \ldots <i_{r}^{h}\leq h+1, \ 1\leq r \leq h$ such that $[u_{r i_{r}^{1}}]_{\theta}=[u_{r
i_{r}^{2}}]_{\theta}= \ldots =[u_{r i_{r}^{h}}]_{\theta}$ for $r\in\{1;\cdots;h\}$; as $\{i_{1}^{1};i_{1}^{2}; \ldots
;i_{1}^{h}\}\cap\{i_{2}^{1};i_{2}^{2}; \ldots ;i_{2}^{h}\}\cap, \ldots ,\cap\{i_{h}^{1};i_{h}^{2}; \ldots ;i_{h}^{h}\}\neq \emptyset$
 then, with
 $i\in\{i_{1}^{1};i_{1}^{2}; \ldots ;i_{1}^{h}\}\cap\{i_{2}^{1};i_{2}^{2}; \ldots ;i_{2}^{h}\}\cap, \ldots ,\cap\{i_{h}^{1};i_{h}^{2}; \ldots ;i_{h}^{h}\}$ we have
 $$( m(u_{11},u_{12}, \ldots ,u_{1 h+1}), \ldots ,m(u_{h1},u_{h2}, \ldots ,u_{h h+1}))\in [u_{1
i}]_{\theta}\times\ldots\times[u_{h i}]_{\theta}\subseteq\rho$$ because $\rho$ is $\theta$-closed. Thus $m\in \pol(\rho)$.\\
Finally, if $\rho$ is of type III, there exist $1\leq i_{r}^{1}<i_{r}^{2}< \ldots <i_{r}^{h}\leq h+1, \ 1\leq r \leq h$ such that $[u_{r
i_{r}^{1}}]_{\theta}=[u_{r i_{r}^{2}}]_{\theta}= \ldots =[u_{r i_{r}^{h}}]_{\theta}$ for $r\in\{1;\cdots;h\}$; as $$\{i_{1}^{1};i_{1}^{2}; \ldots
;i_{1}^{h}\}\cap\{i_{2}^{1};i_{2}^{2}; \ldots ;i_{2}^{h}\}\cap, \ldots ,\cap\{i_{h}^{1};i_{h}^{2}; \ldots ;i_{h}^{h}\}\neq \emptyset$$
 then, with
 $i\in\{i_{1}^{1};i_{1}^{2}; \ldots ;i_{1}^{h}\}\cap\{i_{2}^{1};i_{2}^{2}; \ldots ;i_{2}^{h}\}\cap, \ldots ,\cap\{i_{h}^{1};i_{h}^{2}; \ldots ;i_{h}^{h}\}$ we
 have ( with $Z:=(m(u_{11},\ldots,u_{1 h+1}),\ldots,m(u_{h1}, \ldots ,u_{h h+1}))$)
 $$Z\in \{u_{1i},...,u_{hi},T_{[u_{1i}]_{\theta}},...,T_{[u_{hi}]_{\theta}}\}^{h}\subseteq \rho.$$  Hence $m\in \pol(\rho)$.
\end{enumerate}
\end{proof}

\begin{proof}[\textbf{Proof (of proposition \ref{sufficiency-direction})}]\text{ }
Let $f\in \pol(\theta)\setminus(\pol(\theta)\cap \pol(\rho)).$ We
will prove that $G=<\pol(\theta)\cap \pol(\rho)\cup\{f\}>$ is equal
to $\pol(\theta).$ From Lemma \ref{existence-unanimity-all}, it
follows that $\pol(\theta)\cap \pol(\rho)$ contains an
$h$-near-unanimity function $m$. According to Theorem
\ref{baker-pixley} and the fact that  $m\in G$, we have $G=Pol
Inv^{(h)} G$. If $\tau\in Inv^{(h)} G$, then $\pol(\theta)\cap
\pol(\rho)\subseteq <\pol(\theta)\cap \pol(\rho)\cup\{f\}>= G=Pol
Inv^{(h)} G\subseteq \pol(\tau)$. By Lemma
\ref{caracterisation-diagonal-all}, if $\rho$ is of type I or II,
then $\tau$ is either the  empty relation, either a diagonal
relation through $\theta$ or $\rho$; if $\rho$ is of type III and of
order $l$, then $\tau$ is either the empty relation, either a
diagonal relation through $\theta$ or an intersection of relations
of the form $(\rho_{l,\theta})_{\sigma}$ with
$\sigma\in\mathcal{S}_{h}$. Since $f\notin \pol(\rho)$, therefore
$f$ can not preserve an intersection of relations of the form
$(\rho_{l,\theta})_{\sigma}$ with $\sigma\in\mathcal{S}_{h}$;
therefore $\tau$ is the empty relation or a diagonal relation
through $\theta$. In the light of Lemma \ref{caracterisation-I}, we
have $\pol(\theta)\subseteq G$.
\end{proof}

\begin{remark}
Let us mention that if $\rho$ is of type I or II, then $\pol(\theta)\cap \pol(\rho)$ is also maximal in $\pol(\rho)$, whereas if $\rho$ is of type III,
then $\pol(\theta)\cap \pol(\rho)$ is not maximal in $\pol(\rho)$.
\end{remark}

\subsubsection{Proof of the completeness criterion}\label{sec6-subsec5}\text{ }

In this subsection, we show that the relations of type I, II and III are the only  central relations $\rho$ such that $\pol(\theta)\cap \pol(\rho)$  is
maximal in $\pol(\theta)$. So we suppose that $\pol(\theta)\cap \pol(\rho)$  is maximal in $\pol(\theta)$ and we show that except the types announced, in
the other cases we don't have maximality.

\begin{proposition}\label{completeness-criterion}
If $\pol(\rho)\cap \pol(\theta)$ is maximal in $\pol(\theta)$, then $ \rho$ is either of type I, II, or III.
\end{proposition}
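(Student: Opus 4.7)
I would prove the contrapositive: assuming $\rho$ is not of type I, II, or III, I construct a relation $\tau$ with $\pol(\theta)\cap\pol(\rho) \varsubsetneq \pol(\theta)\cap\pol(\tau) \varsubsetneq \pol(\theta)$, contradicting maximality in $\pol(\theta)$. The natural pool of candidates for $\tau$ consists of the relations manufactured from $\rho$ and $\theta$ by partial $\theta$-saturation on coordinates, namely the $\rho_{l,\theta}$, their permutations $(\rho_{l,\theta})_{\sigma}$ for $\sigma \in \mathcal{S}_{h}$, and intersections thereof. For every such $\tau$ the inclusion $\pol(\theta)\cap\pol(\rho) \subseteq \pol(\theta)\cap\pol(\tau)$ is automatic, so the whole task is to arrange strictness on both sides simultaneously.

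First I would dispose of type II. Set $R_{l} := \bigcap_{\sigma \in \mathcal{S}_{h}}(\rho_{l,\theta})_{\sigma}$. Total symmetry of $\rho$ gives $R_{h} = \rho$, and $R_{l}$ grows as $l$ decreases, so the minimum $l^{*} := \min\{l \geq 0 : R_{l} = \rho\}$ is well-defined in $\{0,\ldots,h\}$. If $l^{*} = 0$ then $\rho = \rho_{0,\theta}$ and $\rho$ is of type II. Otherwise $l^{*} \geq 1$ and $\tau_{0} := R_{l^{*}-1}$ strictly contains $\rho$ by minimality. A unary witness $f \in \pol(\theta)$ built on the model of $g_{1}$ in Theorem \ref{maintheo-primeaffine-equiv}, assigning per-$\theta$-class values so as to realize a tuple of $\tau_{0}\setminus\rho$, preserves $\tau_{0}$ but not $\rho$, giving the left strict inclusion. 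The right strict inclusion $\pol(\theta)\cap\pol(\tau_{0}) \varsubsetneq \pol(\theta)$ holds whenever $\tau_{0} \neq E_{k}^{h}$. If $\tau_{0} = E_{k}^{h}$, then $\rho$ is weakly $\theta$-closed of order $l^{*}$ with $1 \leq l^{*} \leq h-1$, and I would show that maximality forces a transversal of order $l^{*}-1$: in its absence one can coarsen $\rho$ on the missing $\theta$-classes to obtain a strictly larger proper relation realizing an intermediate clone.

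Finally I would handle the remaining side conditions $\eta \subseteq \rho$, $h \leq t$, and the central-element condition of type I. If $\eta \not\subseteq \rho$, a symmetrized enlargement of $\rho$ by part of $\eta$ provides an intermediate relation via a witness operation parallel to $g_{2}$ in Theorem \ref{maintheo-primeaffine-equiv}. If $h > t$, the pigeonhole forces $\tau_{h}^{E_{k}}$ to collapse $\rho_{0,\theta}$ onto $E_{k}^{h}$ and the classification degenerates to a case already handled. When $\rho$ fails to be weakly $\theta$-closed in the strict sense (so type III is genuinely ruled out), the only way to avoid an intermediate clone is the type I configuration, namely that every $\theta$-class contains a central element of $\rho$; otherwise one constructs, from a $\theta$-class disjoint from $C_{\rho}$, an operation in $\pol(\theta)$ preserving a carefully chosen coarsening of $\rho$ but not $\rho$ itself. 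The main obstacle is precisely this fine case analysis separating types I and III when the order $l^{*}$ and the transversal structure interact: one must show that the simultaneous failure of a transversal of order $l^{*}-1$ and of the type I central-element condition necessarily permits an intermediate construction, and the required witness operations closely follow the diagonal-relation arguments in the proof of Lemma \ref{caracterisation-diagonal-all}.
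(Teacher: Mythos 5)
Your first stage is essentially the paper's: you descend along the symmetrized saturations $R_{l}=\bigcap_{\sigma}(\rho_{l,\theta})_{\sigma}$, read off type II at $l^{*}=0$ and type III at intermediate $l^{*}$, and produce an intermediate clone $\pol(\theta)\cap\pol(R_{l^{*}-1})$ when $\rho\varsubsetneq R_{l^{*}-1}\varsubsetneq E_{k}^{h}$; this matches Lemmas \ref{lem-rec-O}, \ref{lem-rec-I}, \ref{lem-rec-I-gamma3-eal}, \ref{lem-rec-I-gamma3}, \ref{lem-rec-I-gamma5-egal} and \ref{lem-rec-I-gamma5} (though the separating witness there is the $h$-ary operation (\ref{function-for-strict-inclusion-f}), not a unary one: a unary map realizing a tuple of $R_{l^{*}-1}\setminus\rho$ need not preserve $R_{l^{*}-1}$, so that detail would need repair).

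The genuine gap is in the case $R_{l^{*}-1}=E_{k}^{h}$. You declare your candidate pool to be the $h$-ary relations $(\rho_{l,\theta})_{\sigma}$ and their intersections, and you propose to force the transversal of order $l^{*}-1$ (and, in the residual case, the type I central-element condition) by ``coarsening $\rho$ on the missing $\theta$-classes''. But once all the $h$-ary saturations have collapsed to $E_{k}^{h}$ there is no proper $h$-ary invariant left in your pool: the condition $\bigcap_{\sigma}(\rho_{l^{*}-1,\theta})_{\sigma}=E_{k}^{h}$ only says that each $h$-tuple of $\theta$-classes \emph{separately} admits compatible representatives, whereas a transversal demands a single choice of representatives working for all $h$-tuples of classes \emph{simultaneously}. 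Detecting the failure of that global condition requires invariant relations of arity strictly larger than $h$; this is exactly why the paper introduces $\rho_{0,\theta}^{l}$, $\varsigma^{l}$ and ${}^{h}\beta_{n+1}^{l}$ for $h<l\leq t$, the $(m+t-1)$-ary relation $\gamma'$, the $k$-ary relation $\zeta^{k}$ and the $(m+t-n)$-ary relations ${}^{h}\beta_{n+1}^{'}$ compared against diagonal relations (Lemmas \ref{lem-rec-II}, \ref{lem-rec-II-gamma3}, \ref{lem-rec-II-gamma3'}, \ref{lem-rec-O-gamma4}, \ref{lem-rec-I-gamma4}, \ref{lem-rec-II-gamma5}, \ref{lem-rec-II-beta-n'}, \ref{lemma-reciproque-gamma5}), and it is also how the type I alternative (every $\theta$-class meets $C_{\rho}$) is finally extracted in Lemma \ref{lemma-classe-central}. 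Relatedly, your assertion that $R_{l^{*}-1}=E_{k}^{h}$ forces $1\leq l^{*}\leq h-1$ is unjustified: the endgame $\bigcap_{\sigma}(\rho_{h-1,\theta})_{\sigma}=E_{k}^{h}$ is precisely the configuration that must be steered to type I rather than to a nonexistent ``type III of order $h$''. As written, the proposal cannot close the case analysis separating types I and III.
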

Before the proof of Proposition \ref{completeness-criterion}, we will give some results on the properties of the relation $\rho$.
\begin{lemma}\label{major-criterion}
If $\pol(\rho)\cap \pol(\theta)$ is maximal in $\pol(\theta)$, then
$\eta\subseteq \rho$.
\end{lemma}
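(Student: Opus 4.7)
The plan is to prove the contrapositive: assuming there is a witness tuple $(u_1,\dots,u_h)\in \eta\setminus \rho$, I will exhibit a clone strictly between $\pol(\rho)\cap\pol(\theta)$ and $\pol(\theta)$, contradicting maximality. Since $\rho$ is central (hence totally reflexive), $\tau_h^{E_k}\subseteq \rho$, so all coordinates of the witness are pairwise distinct; in particular $u_1\neq u_2$ while $u_1\theta u_2$.

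The natural intermediate relation is $\rho_{0,\theta}$ from the beginning of Section~\ref{sec5}. By the Remark, $\rho\subseteq\rho_{0,\theta}$ and $\pol(\rho)\cap\pol(\theta)\subseteq\pol(\rho_{0,\theta})$. A short check shows $\eta\subseteq \rho_{0,\theta}$: given $(a_1,\dots,a_h)\in\eta$, replace the first coordinate by $a_2\in[a_1]_{\theta}$ to obtain $(a_2,a_2,a_3,\dots,a_h)\in \tau_h^{E_k}\subseteq\rho$, so $(a_1,\dots,a_h)\in\rho_{0,\theta}$. Combined with $\eta\not\subseteq\rho$, this gives the strict inclusion of relations $\rho\subsetneq\rho_{0,\theta}$.

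Next I would verify the two strict clone inclusions $\pol(\rho)\cap\pol(\theta)\subsetneq\pol(\rho_{0,\theta})\cap\pol(\theta)\subsetneq\pol(\theta)$. For the first, I build a unary $f\in\pol(\theta)$ that collapses $[u_1]_{\theta}$ appropriately so that $f$ sends some tuple of $\rho$ into $(u_1,\dots,u_h)\notin\rho$, while $f$ still preserves $\rho_{0,\theta}$ because $\rho_{0,\theta}$ is, by construction, closed under replacing coordinates by $\theta$-equivalents. For the second, it suffices to show $\rho_{0,\theta}\neq E_k^h$: choose $a\notin C_\rho$ together with $(a,a_2,\dots,a_h)\notin\rho$ having all coordinates in pairwise distinct $\theta$-classes; then no $\theta$-substitution can alter the tuple, so $(a,a_2,\dots,a_h)\notin\rho_{0,\theta}$, and a standard separating operation on $\pol(\theta)$ then witnesses the strict inclusion.

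The main obstacle is the properness step for $\rho_{0,\theta}$: producing a $\rho$-forbidden tuple with coordinates in distinct $\theta$-classes requires at least $h$ classes, i.e.\ $h\le t$. When $h\le t$ this is handled by working inside the chosen transversal; when $h>t$ the pigeonhole principle blocks this construction and one must either replace $\rho_{0,\theta}$ by a finer intermediate relation (an intersection of the $(\rho_{i,\theta})_\sigma$ with $\sigma\in\mathcal{S}_h$, which still separates $\pol(\rho)\cap\pol(\theta)$ from $\pol(\theta)$) or argue independently that $h>t$ is incompatible with maximality; this case split is the technically delicate portion of the argument.
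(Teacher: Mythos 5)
Your overall strategy (contrapositive plus an intermediate invariant relation squeezed strictly between $\pol(\rho)\cap\pol(\theta)$ and $\pol(\theta)$) matches the paper's, but your choice of intermediate relation, $\rho_{0,\theta}$, does not work in general, and the case you flag as ``technically delicate'' is a genuine unclosed gap rather than a routine verification. The problem is exactly the one you identify: nothing prevents $\rho_{0,\theta}=E_k^h$ while $\eta\nsubseteq\rho$, and then $\pol(\rho_{0,\theta})\cap\pol(\theta)=\pol(\theta)$, so your candidate clone collapses to the top. A concrete instance: on $E_4$ let $\theta$ have classes $\{0;1\}$ and $\{2;3\}$ and let $\rho=E_4^{3}\setminus\{(\sigma(0),\sigma(2),\sigma(3)):\sigma\in\mathcal{S}_{\{0;2;3\}}\}$; this is a ternary central relation (with $1$ central), $(2,3,0)\in\eta\setminus\rho$ since $2\,\theta\,3$, yet $(0,2,3)\,\theta\,(1,2,3)\in\rho$ gives $\rho_{0,\theta}=E_4^{3}$. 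Neither of your proposed repairs closes the gap as stated: arguing ``independently that $h>t$ is incompatible with maximality'' is circular here, because the paper's only argument for $h\le t$ (Remark \ref{arity-criterion}) is itself a consequence of the present lemma; and replacing $\rho_{0,\theta}$ by intersections of the $(\rho_{i,\theta})_\sigma$ amounts to re-running the entire cascade of Lemmas \ref{lem-rec-I}--\ref{lemma-classe-central}, all of which the paper establishes only under the standing hypothesis $\eta\subseteq\rho$.

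The paper sidesteps all of this with a different intermediate relation: $\eta^{1}=\{(u_1,\ldots,u_h)\in\rho:\ u_1\theta u_2\}=\eta\cap\rho$. One gets $\pol(\rho)\cap\pol(\theta)\subseteq\pol(\eta^{1})$ immediately; $\pol(\eta^{1})\subseteq\pol(\theta)$ because $pr_{12}(\eta^{1})=\theta$ by total reflexivity of $\rho$; and both strictness claims are witnessed by explicit $h$-ary and $(h-1)$-ary operations built from a pair $(a,b)\in\theta$ with $a\neq b$ and the witness tuple in $\eta\setminus\rho$, with no case distinction on $h$ versus $t$ or on whether $\rho_{0,\theta}$ is all of $E_k^{h}$. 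A secondary issue: even in the favourable case $\rho\varsubsetneq\rho_{0,\theta}\varsubsetneq E_k^{h}$, a \emph{unary} $f$ is a poor candidate for separating $\pol(\rho)\cap\pol(\theta)$ from $\pol(\rho_{0,\theta})\cap\pol(\theta)$, since it would have to carry an all-distinct tuple of $\rho$ bijectively onto $(u_1,\ldots,u_h)$ while preserving $\theta$ and $\rho_{0,\theta}$; the paper uses the $h$-ary operation of equation (\ref{function-for-strict-inclusion-f}) for this purpose (see Lemma \ref{lem-rec-I}).
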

\begin{proof}By contraposition, suppose that $\eta\nsubseteq \rho$, therefore there exists $$\left(u_1, u_2, \ldots, u_h\right)\in
                                                  \,\eta\quad \text{ such that } \quad \left(u_1, u_2, \ldots, u_h\right)\notin
                                                  \rho.$$ We denote by $\eta^{1}$ the relation $$\eta^{1}=\left\{\left(u_1, u_2, \ldots, u_h\right)\in \rho
/ u_1\theta u_2\right\}.$$ Let's prove  the following inclusions $$\pol(\rho)\cap \pol(\theta)\varsubsetneq \pol(\eta^{1})\varsubsetneq \pol(\theta).$$
\begin{enumerate}
    \item [(i)] Firstly, let us prove  that $\pol(\rho)\cap \pol(\theta)\varsubsetneq \pol(\eta^{1}).$
Let $f\in \pol(\rho)\cap \pol(\theta) $ with arity $n$, we will prove that $f \in \pol(\eta^{1})$.\\
Let $\left(u_{11}, u_{21}, \ldots,u_{h1}\right), \ldots ,\left(u_{1n}, u_{2n}, \ldots,  u_{hn}\right) \in  \,\eta^{1}.$\\ We have $f(u_{11}, \ldots
,u_{1n})\theta f(u_{21}, \ldots ,u_{2n})$ since $f\in \pol(\theta)$ and $$\left(f(u_{11}, \ldots ,u_{1n}),f(u_{21}, \ldots ,u_{2n}), \ldots,  f(u_{h1},
\ldots ,u_{hn})\right)\in \rho$$ since $f\in \pol(\rho)$. Therefore  $f\in \pol(\eta^{1})$.\\  Let $\left(u_1, u_2, \ldots, u_h\right)\in \,\eta\setminus
\rho$ and $(a,b)\notin \theta$ fixed,  we define the $h$-ary operation $f$ on $E_{k}$ by:

\begin{equation}\label{function-for-strict-inclusion-f}
      f(x_1,x_2, \ldots ,x_h)= \left\{\begin{array}{l}
                                                 u_1 \text{ if } (x_1,x_2, \ldots ,x_h)\theta (a,a, \ldots ,a) \\
                                                 u_2 \text{ if } (x_1,x_2, \ldots ,x_h)\theta (a,b,a, \ldots ,a) \\
                                                 \vdots\\
                                                 u_h \text{ if } (x_1,x_2, \ldots ,x_h)\theta (a,a, \ldots ,a,b) \\
                                                 u_1 \text{ otherwise. }
                                               \end{array}
      \right.
\end{equation}
With $\left(u_{11}, u_{21}, \ldots, u_{h1}\right)$, \ldots ,$\left(u_{1s}, u_{2s}, \ldots, u_{ss}\right)$ $\in$ $ \,\eta^{1}$, according to definition of $
\eta^{1}$,  we have $f(u_{11}, \ldots ,u_{1s})= f(u_{21}, \ldots ,u_{2s})$ since $f\in \pol(\theta)$ and $(u_{11}, \ldots ,u_{1n})\theta(u_{21}, \ldots
,u_{2n}).$ From the fact that $\rho$ is totally reflexive we have  $$\left(f(u_{11}, \ldots ,u_{1n}),f(u_{21}, \ldots ,u_{2n}), \ldots, f(u_{h1}, \ldots
,u_{hn})\right)\in \,\eta^{1}.$$  Hence $f\in \pol(\eta^{1})$.

Furthermore, $\left(\begin{array}{cccccc} a&a&a&\cdots &a&a \\a&b&a&\cdots&a&a \\\vdots \\a&a&a&\cdots&a&b \\
\end{array}\right)\subseteq \rho$ and by the definition of $f$ we have
$$\left(u_1, u_2, \ldots, u_h\right)=\left(f(a,a,a,\ldots,a), f(a,b,a,\ldots,a), \ldots, f(a,a,\ldots,a,b)\right)\notin \rho.$$  Therefore, $f\notin \pol(\rho) $ and $\pol(\rho)\cap \pol(\theta)\varsubsetneq \pol(\eta^{1}).$
    \item [(ii)]Secondly we will show that $\pol(\eta^{1})\varsubsetneq \pol(\theta)$.\\ From the equality $pr_{12}(\eta^{1})=\theta$, it
    follows that $\pol(\eta^{1})\subseteq \pol(\theta)$.  Let $\left(u_1, u_2, \ldots, u_h\right)\in \,\eta\setminus \rho$ be a
     fixed element and $c$ be a central element of $\rho.$ It is obvious that $\left(u_1, u_2, \ldots, u_h\right)\notin \,\eta^{1}$.
Let $a,b,c\in E_{k}$ such that $(a,b)\in\theta$, $a\neq b$ and $(a,c)\notin\theta$.\\
For $1\leq i\leq h$, we define the $(h-1)$-tuple $\mathbf{W}_{i}=(w_{i1},\ldots,w_{i h-1})$ by:
\[w_{1l}=a\text{ for } 1\leq l\leq h-1\]
\[w_{2l}=\left\{\begin{array}{cc}
                  b & \text{ if } l=h-1 \\
                  a & \text{ elsewhere }
                \end{array}
\right.\]
\[\text{for } m\geq 3, w_{ml}=\left\{\begin{array}{cc}
                  c & \text{ if } l=m-2 \\
                  a & \text{ elsewhere. }
                \end{array}
\right.\] We have $\mathbf{W}_{1}\neq \mathbf{W}_{2}$ and for every
$1\leq i<j\leq h$, $\mathbf{W}_{i}\theta \mathbf{W}_{j}$ if and only
if $i=1$ and $j=2$. We define the $(h-1)$-ary operation $g$ on
$E_{k}$ by:
\[g(x_1,x_2, \ldots ,x_{h-1})=  \left\{\begin{array}{l}
                                                 u_1 \text{ if } (x_1,x_2, \ldots ,x_{h-1})=\mathbf{W}_{1}  \\
                                                 u_2 \text{ if }(x_1,x_2, \ldots ,x_{h-1})=\mathbf{W}_{2}\\
                                                 u_i \text{ if } (x_1,x_2, \ldots ,x_{h-1})\theta \mathbf{W}_{i}\text{ for some } 3\leq i\leq h-1  \\
                                                 u_{h} \text{ elsewhere. }
                                               \end{array}
      \right.\]
Since  $g(\theta)\subseteq \{(u_1,u_2);(u_2,u_1)\}\cup\{(u_i,u_i): i\in\{1;\cdots;h\}\}$, $g\in \pol(\theta)$.\\
The matrix $(w_{ij})_{\begin{subarray}{l}
  1\leq i\leq h\\
  1\leq j\leq h-1
\end{subarray}
}$ is a subset of $\eta^{1}$ and \[g\left((w_{ij})_{\begin{subarray}{l}
  1\leq i\leq h\\
  1\leq j\leq h-1
\end{subarray}
}\right)=g((\mathbf{W}_{1}),\ldots,g(\mathbf{W}_{h}))=(u_1,\ldots,u_h)\notin\eta^{1}.\] Hence $g\notin \pol(\eta^{1})$. Therefore
$\pol(\eta^{1})\varsubsetneq \pol(\theta)$.
\end{enumerate}
\end{proof}

\begin{remark}\label{arity-criterion}
If $\pol(\theta)\cap \pol(\rho)$ is maximal in $\pol(\theta)$,  then the arity of $\rho$ is less than or equal to $t$ where $t$ is the  number of
equivalence classes of $\theta$. Indeed, if the arity of $\rho$ is greater than $t$ then $\eta\nsubseteq \rho$, because $\rho\neq
E_k^{\textit{arity}(\rho)}$ and $(\eta\subseteq\rho\Rightarrow\rho=E_k^{\textit{arity}(\rho)})$.
\end{remark}
From now on we suppose that $\eta\subseteq\rho$.

According to the definition of $\rho_{0,\theta}$ we have $\rho\subseteq\rho_{0,\theta}$.

\begin{lemma}\label{lem-rec-O}
If $\rho= \rho_{0,\theta}$, then $\pol(\theta)\cap \pol(\rho)$ is  maximal in $\pol(\theta)$
\end{lemma}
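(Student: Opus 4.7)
The plan is to observe that the hypothesis $\rho=\rho_{0,\theta}$ is, by Definition \ref{def-theta-fermeture}, exactly the statement ``$\rho$ is $\theta$-closed'', which is condition II of Theorem \ref{maintheorem}. Since we are also operating under the standing assumption $\eta\subseteq\rho$ of this subsection (and this inclusion is in any event automatic once $\rho=\rho_{0,\theta}$ is totally reflexive and central), Proposition \ref{sufficiency-direction} applies with $\rho$ of type II and yields the maximality of $\pol(\theta)\cap\pol(\rho)$ in $\pol(\theta)$ without further work.

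If a self-contained argument is preferred, I would retrace the three-step machinery underlying Proposition \ref{sufficiency-direction} in this setting. First, I would use the type II clause of Lemma \ref{existence-unanimity-all} to exhibit an $h$-near-unanimity operation $m\in\pol(\theta)\cap\pol(\rho)$, the verification that $m$ preserves $\rho$ being precisely where $\theta$-closedness is invoked. Second, I would fix an arbitrary $f\in\pol(\theta)\setminus(\pol(\theta)\cap\pol(\rho))$, set $G=\langle(\pol(\theta)\cap\pol(\rho))\cup\{f\}\rangle$, and appeal to Theorem \ref{baker-pixley} together with $m\in G$ to reduce the task to showing $\inva^{(h)}G\subseteq\inva^{(h)}\pol(\theta)$.

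Third, Lemma \ref{caracterisation-diagonal-all}(b) classifies $\inva^{(h)}(\pol(\theta)\cap\pol(\rho))$ in the type II case as the empty relation, a diagonal relation through $\theta$, or $\rho$ itself. Because $f\in G\setminus\pol(\rho)$, no element of $\inva^{(h)}G$ can equal $\rho$, so the remaining invariants are empty or diagonal through $\theta$; Lemma \ref{caracterisation-I} then ensures they are all preserved by $\pol(\theta)$. Hence $\pol(\theta)\subseteq G$, and the opposite inclusion follows at once from $f\in\pol(\theta)$. There is really no obstacle here: the delicate point would be checking that Lemma \ref{caracterisation-diagonal-all}(b) is applicable, but its hypotheses---namely $\eta\subseteq\rho$ and $\rho$ of type II---are precisely what we were given.
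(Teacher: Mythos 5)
Your proposal is correct and follows the paper's own proof exactly: the paper likewise observes that $\rho=\rho_{0,\theta}$ means $\rho$ is $\theta$-closed, i.e.\ of type II, and invokes Proposition \ref{sufficiency-direction}. Your optional self-contained unwinding simply retraces the paper's proof of that proposition (near-unanimity operation, Baker--Pixley, and the classification of invariant $h$-ary relations), so no new route is taken.
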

\begin{proof}
If $\rho=\, \rho_{0,\theta}$, then $\rho$ is $\theta$-closed. Hence $\rho$ is of type II and $\pol(\theta)\cap \pol(\rho)$ is  maximal in $\pol(\theta)$
from Proposition \ref{sufficiency-direction}.
\end{proof}
Now we suppose that $\rho\varsubsetneq\, \rho_{0,\theta}$ and we
have two cases express in the following lemmas.
\begin{lemma}\label{lem-rec-I}
If $\rho\varsubsetneq\, \rho_{0,\theta}\varsubsetneq E_{k}^{h}$, then $\pol(\theta)\cap \pol(\rho)$ is not maximal in $\pol(\theta)$
\end{lemma}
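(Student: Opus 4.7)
The plan is to exhibit an intermediate clone strictly between $\pol(\theta)\cap\pol(\rho)$ and $\pol(\theta)$, namely $\pol(\theta)\cap\pol(\rho_{0,\theta})$. This candidate is natural because $\rho_{0,\theta}$ is by construction the $\theta$-saturation of $\rho$ in every coordinate, hence is itself $\theta$-closed.

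First I would verify that $\rho_{0,\theta}$ is an $h$-ary central relation of type II in the sense of Theorem~\ref{maintheorem}. Total symmetry is item~(2) of the remark listing properties of the relations $\rho_{i,\theta}$, and total reflexivity holds because $\tau_h^{E_k}\subseteq\rho\subseteq\rho_{0,\theta}$. The inclusion $C_\rho\subseteq C_{\rho_{0,\theta}}$ shows that the center is nonempty, while the hypothesis $\rho_{0,\theta}\subsetneq E_k^h$ forces $C_{\rho_{0,\theta}}\subsetneq E_k$ (otherwise, by total reflexivity, every tuple of $E_k^h$ would lie in $\rho_{0,\theta}$). Since $(\rho_{0,\theta})_{0,\theta}=\rho_{0,\theta}$, the relation is $\theta$-closed, hence of type II. Proposition~\ref{sufficiency-direction} then gives $\pol(\theta)\cap\pol(\rho_{0,\theta})\subsetneq\pol(\theta)$, which takes care of the right-hand inclusion.

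The inclusion $\pol(\theta)\cap\pol(\rho)\subseteq\pol(\theta)\cap\pol(\rho_{0,\theta})$ is item~(1) of the same remark. To prove strictness I would argue by contradiction: if equality held, then $\rho$ would be preserved by every operation in $\pol(\theta)\cap\pol(\rho_{0,\theta})$. Applying Lemma~\ref{caracterisation-diagonal-all}(b) to the type-II relation $\rho_{0,\theta}$, $\rho$ would have to be the empty relation, a diagonal relation through $\theta$, or $\rho_{0,\theta}$ itself. The first is excluded since $\rho$ is central (hence nonempty) and the third by the hypothesis $\rho\subsetneq\rho_{0,\theta}$.

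The case I expect to require the most care is ruling out that $\rho$ is a diagonal relation through $\theta$. Because $\rho$ is totally reflexive, any nontrivial constraint of the form $a_i=a_j$ or $a_i\theta a_j$ imposed by being a diagonal can be violated by a suitable tuple of $\tau_h^{E_k}$: with $h\ge 3$ and $\theta$ nontrivial, one can equate two other coordinates to satisfy reflexivity while letting the constrained pair take distinct, $\theta$-unrelated values. Hence both equivalence relations $\varepsilon_1$ and $\varepsilon_2$ defining the diagonal must be trivial, which forces $\rho=E_k^h$ and contradicts $\rho\subsetneq\rho_{0,\theta}\subsetneq E_k^h$. This establishes the strict chain $\pol(\theta)\cap\pol(\rho)\subsetneq\pol(\theta)\cap\pol(\rho_{0,\theta})\subsetneq\pol(\theta)$, proving non-maximality.
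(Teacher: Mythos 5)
Your proof is correct, and it reaches the same intermediate clone $\pol(\theta)\cap\pol(\rho_{0,\theta})$ as the paper, but it justifies the two strict inclusions quite differently. The paper does everything by hand: it takes $(u_1,\dots,u_h)\in\rho_{0,\theta}\setminus\rho$ (respectively $E_k^h\setminus\rho_{0,\theta}$), a pair $(a,b)\notin\theta$, and reuses the explicit $h$-ary operation $f$ of display (\ref{function-for-strict-inclusion-f}) from Lemma \ref{major-criterion}; since the columns of the associated $h\times h$ matrix lie in $\tau_h^{E_k}\subseteq\rho$ while $f$ maps its rows onto $(u_1,\dots,u_h)$, and since $f$ only outputs repeats or permutations of $(u_1,\dots,u_h)$ (so total symmetry and reflexivity of $\rho_{0,\theta}$ give $f\in\pol(\rho_{0,\theta})$ in the first case), both separations follow from one construction. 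You instead observe that $\rho_{0,\theta}$ is itself a central relation of type II, import Proposition \ref{sufficiency-direction} for the right-hand strictness, and derive the left-hand strictness by contradiction from the classification in Lemma \ref{caracterisation-diagonal-all}(b), ruling out the diagonal case via total reflexivity of $\rho$ (using $h\geq 3$ and $\theta\neq\nabla_{E_k}$). This is logically sound and non-circular, since those two results belong to the sufficiency part proved before this lemma, and it even yields slightly more: the intermediate clone is maximal in $\pol(\theta)$, not merely proper. What it costs is self-containedness — you lean on the heaviest structural results of the section where a two-line explicit witness suffices — and you should make the verification that $\rho_{0,\theta}$ is a bona fide proper central relation (nonempty proper center, idempotence of the closure) an explicit displayed step rather than an aside, since Proposition \ref{sufficiency-direction} is only applicable once that is established.
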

\begin{proof}It is easy to see that $\pol(\rho)\cap \pol(\theta)\subseteq \pol(\theta)\cap \pol(\rho_{0,\theta})\subseteq \pol(\theta).$\\
Let $a,b\in  E_{k}$ such that $(a,b)\notin \theta$ and  $\left( u_1, u_2, \ldots, u_h\right)\in\, \rho_{0,\theta}\setminus \rho$ (respectively
$E_{k}^{h}\setminus\rho$). Using the $h$-ary operation $f$ defined in the proof of Lemma \ref{major-criterion}, we show that $\pol(\rho)\cap
\pol(\theta)\varsubsetneq
\pol(\theta)\cap \pol(\rho_{0,\theta})$(respectively $\pol(\theta)\cap \pol(\rho_{0,\theta})\varsubsetneq \pol(\theta).$)\\
Hence $\pol(\rho)\cap \pol(\theta)\varsubsetneq \pol(\theta)\cap \pol(\rho_{0,\theta})\varsubsetneq \pol(\theta).$
\end{proof}

\begin{lemma}\label{lem-rec-II}\text{ }
If $\rho\varsubsetneq\,\rho_{0,\theta}= E_{k}^{h}$  and there exists an integer $l>h$ such that  $\rho_{0,\theta}^{l}\neq E_k^{l}$, then $\pol(\theta)\cap
\pol(\rho)$ is not maximal in $\pol(\theta)$ where $\rho_{0,\theta}^{l}$ is the relation  $$\rho_{0,\theta}^{l}=\left\{\mathbf{u}\in E_{k}^{l} / \exists
(e_1,\ldots,e_l)\in [u_1]_\theta\times\cdots\times[u_l]_\theta; \{e_1,\ldots,e_l\}^{h}\subseteq \rho\right\}.$$
\end{lemma}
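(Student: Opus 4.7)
The strategy is to interpose the clone $\pol(\theta)\cap\pol(\rho_{0,\theta}^{l})$ and establish the chain
\[
\pol(\theta)\cap\pol(\rho)\varsubsetneq\pol(\theta)\cap\pol(\rho_{0,\theta}^{l})\varsubsetneq\pol(\theta),
\]
which immediately rules out the maximality of $\pol(\theta)\cap\pol(\rho)$ in $\pol(\theta)$. The inclusion on the left is by witness transport. Given $f\in\pol(\theta)\cap\pol(\rho)$ of arity $n$ and columns $\mathbf{u}^{(j)}=(u_1^{(j)},\ldots,u_l^{(j)})\in\rho_{0,\theta}^{l}$ ($j=1,\ldots,n$) with associated witnesses $e^{(j)}=(e_1^{(j)},\ldots,e_l^{(j)})$ as in the definition, I set $v_i=f(u_i^{(1)},\ldots,u_i^{(n)})$ and $w_i=f(e_i^{(1)},\ldots,e_i^{(n)})$. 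Preservation of $\theta$ gives $w_i\in[v_i]_{\theta}$; and for any indices $1\le i_1,\ldots,i_h\le l$, the tuples $(e_{i_1}^{(j)},\ldots,e_{i_h}^{(j)})\in\{e^{(j)}\}^{h}\subseteq\rho$ are mapped row-wise to $(w_{i_1},\ldots,w_{i_h})\in\rho$ by $f\in\pol(\rho)$. Hence $\{w_1,\ldots,w_l\}^{h}\subseteq\rho$, so $(v_1,\ldots,v_l)\in\rho_{0,\theta}^{l}$.

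For the right-hand strict inclusion, observe that membership in $\rho_{0,\theta}^{l}$ depends only on the $\theta$-classes of the entries; together with the hypothesis $\rho_{0,\theta}^{l}\neq E_k^{l}$ this means the induced relation $\overline{\rho_{0,\theta}^{l}}$ on $E_k/\theta$ is proper. Any operation on $E_k/\theta$ not preserving it can be lifted, through a section $\sigma\colon E_k/\theta\to E_k$, to $g(x_1,\ldots,x_n):=\sigma(\bar g([x_1]_\theta,\ldots,[x_n]_\theta))$, which lies in $\pol(\theta)$ and inherits the failure to preserve $\rho_{0,\theta}^{l}$.

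The left-hand strict inclusion is the expected main obstacle. Since $\rho$ is central, $\rho\varsubsetneq E_k^{h}=\rho_{0,\theta}$; pick any $(u_1,\ldots,u_h)\in E_k^{h}\setminus\rho$ and, using $\rho_{0,\theta}=E_k^{h}$, fix $(e_1,\ldots,e_h)\in\rho$ with $e_k\in[u_k]_{\theta}$. I take $f$ to be the $h$-ary operation of equation (\ref{function-for-strict-inclusion-f}): it lies in $\pol(\theta)$ by construction, and the staircase-matrix argument of the proof of Lemma \ref{major-criterion} shows $f\notin\pol(\rho)$. To see $f\in\pol(\rho_{0,\theta}^{l})$, note that the image of $f$ is contained in $\{u_1,\ldots,u_h\}$. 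Given $h$ columns $\mathbf{u}^{(j)}\in\rho_{0,\theta}^{l}$ with row outputs $v_i=u_{k(i)}$, setting $\tilde e_i:=e_{k(i)}\in[v_i]_{\theta}$ provides a witness: $\{\tilde e_1,\ldots,\tilde e_l\}\subseteq\{e_1,\ldots,e_h\}$, and since $(e_1,\ldots,e_h)\in\rho$ every $h$-tuple drawn from this $h$-element set lies in $\rho$ (tuples with a repetition by total reflexivity of $\rho$, the remaining distinct-entry tuples by total symmetry as permutations of $(e_1,\ldots,e_h)$). Hence $\{\tilde e_1,\ldots,\tilde e_l\}^{h}\subseteq\rho$ and $(v_1,\ldots,v_l)\in\rho_{0,\theta}^{l}$. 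The delicate point is this last verification: its cleanness rests on the image of $f$ collapsing into the $h$-element witness set, so full $\rho$-compatibility follows for free from total reflexivity and total symmetry, without having to control the $\theta$-class profile of an arbitrary $l$-tuple.
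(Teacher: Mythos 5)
Your proof is correct and is essentially the paper's own argument: the same interposed clone $\pol(\theta)\cap\pol(\rho_{0,\theta}^{l})$ (the paper works with the minimal admissible arity $m$ instead of the given $l$, which changes nothing), the same witness-transport for the inclusion $\pol(\theta)\cap\pol(\rho)\subseteq\pol(\theta)\cap\pol(\rho_{0,\theta}^{l})$, and the same operation (\ref{function-for-strict-inclusion-f}) for the left-hand strictness --- and in fact your explicit witness $\{\tilde e_1,\dots,\tilde e_l\}\subseteq\{e_1,\dots,e_h\}$ together with total reflexivity and total symmetry of $\rho$ is a more self-contained justification of $f\in\pol(\rho_{0,\theta}^{l})$ than the paper's appeal to ``$m>h$ and $\rho_{0,\theta}^{m}$ is totally reflexive'', which itself needs the minimality of $m$ (or exactly your argument) to be verified. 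The one point to tighten is the right-hand strictness: you assert without justification that some operation on $E_k/\theta$ fails to preserve the induced relation; this is true (the induced relation is totally symmetric, proper, and contains non-constant tuples, hence is not a diagonal relation and so is not invariant under all operations), but it is quicker to exhibit the explicit $l$-ary staircase operation sending the columns $(a,\dots,a),(a,b,a,\dots,a),\dots,(a,\dots,a,b)$ to a fixed tuple of $E_k^{l}\setminus\rho_{0,\theta}^{l}$, as the paper does in part (ii) of its proof.
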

\begin{proof} To prove our lemma, we will show that $\pol(\rho)\cap \pol(\theta)\varsubsetneq
\pol(\rho_{0,\theta}^{m})\cap \pol(\theta)\varsubsetneq \pol(\theta)$ where $m=min\{l\in \mathbb{N}\setminus\{0;1;2; \ldots ;h\}; \,\rho_{0,\theta}^{l}\neq
E_k^{l}\}$. We will distinguish two cases;  one for each inclusion.
\begin{enumerate}
    \item [(i)]Let us prove here that $\pol(\rho)\cap \pol(\theta)\varsubsetneq
\pol(\rho_{0,\theta}^{m})\cap \pol(\theta)$ \\
Let $f\in \mathcal{O}^{n}(E_k)$ be an $n$-ary operation on $E_k$ such that $f\in \pol(\theta)\cap \pol(\rho)$. According to the definition of
$\rho_{0,\theta}^{m}$, for
$$\left(u_{11},  \ldots, u_{m1}\right), \ldots ,\left(u_{1n}, \ldots, u_{mn} \right)\in\, \rho_{0,\theta}^{m}$$ there exist $a_{ij}\in [u_{ij}]_{\theta}$ such that for all
$j\in\{1;\cdots;n\}$, we have\\
$\{a_{1j},\ldots,a_{mj}\}^{h}\subseteq\rho$. Then, it follows that
there exist $a_{ij}\in [u_{ij}]_{\theta}$ such that for all
$i_1,i_2, \ldots ,i_h\in\{1;\cdots;m\}$ we have,
$$\left(a_{i_{1}1}, a_{i_{2}1}, \ldots,  a_{i_{h}1} \right), \ldots ,\left(a_{i_{1}n},  a_{i_{2}n}, \ldots, a_{i_{h}n}\right) \in \rho;$$
which imply that there exist $a_{ij}\in [u_{ij}]_{\theta}$ such that for all  $i_1,i_2, \ldots ,i_h\in\{1;\cdots;m\}$ we have,
$$\left(f(a_{i_{1}1}, \ldots ,a_{i_{1}n}), f(a_{i_{2}1}, \ldots ,a_{i_{2}n}), \ldots, f(a_{i_{h}1}, \ldots ,a_{i_{h}n})\right)\in \rho.$$
Since $f\in \pol(\theta)$ and for all $ i \in \{1;\cdots;m\}$, $(u_{i1}, \ldots ,u_{in})\theta(a_{i1}, \ldots ,a_{in})$, it appears that
$$\left(f(u_{11}, \ldots ,u_{1n}),  \ldots , f(u_{m1}, \ldots ,u_{mn})\right)\in\, \rho_{0,\theta}^{m} \quad \text{and} \quad f\in\, \rho_{0,\theta}^{m}.$$ Hence $f\in \pol(\rho_{0,\theta}^{m})\cap \pol(\theta).$

Let $\left(v_1,  \ldots,  v_h\right)\in E_{k}^{h}\setminus \rho$ and $(a,b)\notin \theta$, using the $h$-ary operation $f$ on $E_k$, specified by:
$$f(x_1,x_2, \ldots ,x_h)=  \left\{\begin{array}{l}
                                                 v_1 \text{ if } (x_1,x_2, \ldots ,x_h)\theta (a,a, \ldots ,a) \\
                                                 v_2 \text{ if } (x_1,x_2, \ldots ,x_h)\theta (a,b,a, \ldots ,a) \\
                                                 \vdots\\
                                                 v_h \text{ if } (x_1,x_2, \ldots ,x_h)\theta (a,a, \ldots ,a,b,a) \\
                                                 v_h \text{ if } (x_1,x_2, \ldots ,x_h)\theta (a,a, \ldots ,a,a,b) \\
                                                 v_1 \text{ otherwise. }
                                               \end{array}
      \right.$$
and the fact that $m>h$ and $\rho_{0,\theta}^{m}$ is totally reflexive,  we obtain $f\notin \pol(\rho)$ and $f\in \pol(\rho_{0,\theta}^{m}).$

    \item [(ii)] This item is devoted to prove that $\pol(\rho_{0,\theta}^{m})\cap \pol(\theta)\varsubsetneq \pol(\theta).$\\
Let $\left(u_{1}, \ldots, u_{m}\right)\in E_k^{m}\setminus\, \rho_{0,\theta}^{m}$ ($E_k^{m}\setminus\, \rho_{0,\theta}^{m}$ is not empty because
$\rho_{0,\theta}^{m}\neq E_{k}^{m}$) and $(a,b)\notin \theta.$ Let us consider the $m$-ary operation $h$ on $E_{k}$ defined by:
\begin{equation}\label{function-for-strict-inclusion-h-in-theta}
    h(x_1, \ldots ,x_m)=
\left\{\begin{array}{l}
                                                 u_1 \text{ if } (x_1, \ldots ,x_m)\theta (a, \ldots ,a,a)=a_1 \\
                                                 u_2 \text{ if } (x_1, \ldots ,x_m)\theta (a,b,a, \ldots ,,a)=a_2 \\
                                                 u_3 \text{ if } (x_1, \ldots ,x_m)\theta (a,a,b,a \ldots ,a)=a_3 \\
                                                 \vdots\\
                                                 u_{m} \text{ if } (x_1, \ldots ,x_m)\theta (a,a,a, \ldots a,b)=a_m\\
                                                 u_m \text{ otherwise. }
                                               \end{array}
      \right.
\end{equation}
The implication $a_i\theta a_j\Rightarrow i=j$  allows us to say that $h\in \pol(\theta).$ It is clear that
$$\left(\begin{array}{c} a \\\vdots\\a\\a\\a \\a\\a \\
\end{array}\right), \  \left(\begin{array}{c} a \\b\\a \\\vdots\\a\\a\\a \\
\end{array}\right), \ \left(\begin{array}{c} a \\a\\b\\a\\\vdots \\a\\a \\
\end{array}\right), \  \ldots , \ \left(\begin{array}{c} a \\a\\a\\\vdots\\a\\a \\b \\
\end{array}\right)\in\, \rho_{0,\theta}^{m}$$ but following the definition  of $h$, it appears that $$\left(u_1, u_2, u_3,\ldots, u_m\right)=
\left( h(a_1), h(a_2), h(a_3), \ldots, h(a_m)\right)\notin\, \rho_{0,\theta}^{m};$$ Therefore, $h\notin \pol(\rho_{0,\theta}^{m})$ and
$\pol(\rho_{0,\theta}^{m})\cap \pol(\theta)\varsubsetneq \pol(\theta).$
\end{enumerate}
Hence $\pol(\rho)\cap \pol(\theta)\varsubsetneq \pol(\rho_{0,\theta}^{m})\cap \pol(\theta)\varsubsetneq \pol(\theta).$
\end{proof}

In the light of Lemma \ref{lem-rec-II}, it is natural to suppose
$\rho_{0,\theta}= E_{k}^{h}$ and $\rho_{0,\theta}^{l}= E_k^{l}$ for
all $l>h.$ In particular, for $l=t$ we have $\rho_{0,\theta}^{t}=
E_k^{t}$ and there exists $(x_1,x_2, \ldots ,x_t)\in C_0\times
C_1\times \ldots \times C_{t-1}$ such that $\{x_1,x_2, \ldots
,x_t\}^{h} \subseteq\rho$. Let $\,\varsigma$ be the relation
$$\, \varsigma=\left\{\mathbf{a}\in E_{k}^{h}| \exists u_{ij}\in [a_{i}]_{\theta}, 1\leq i,j\leq h\text{ with } i\neq j\right.\text{ such that } \forall j\in\{1;\cdots;h\},$$
$$\left. \left(a_j, u_{j_{1}j}, u_{j_{2}j}, \ldots, u_{j_{h-1}j}\right)\in \rho, \text{ with } \{j_{1}; \cdots; j_{h-1}\}=\{1; \cdots; h\}\setminus\{j\}\right\}.$$
Since $\rho $ is totally symmetric, $\, \varsigma=\underset{\sigma\in\mathcal{S}_{h}}\bigcap(\rho_{1,\theta})_{\sigma}$ and it is obvious that
$\rho\subseteq\, \varsigma$.

\begin{lemma}\label{lem-rec-I-gamma3-eal}
If $\rho= \varsigma$, then $\pol(\theta)\cap \pol(\rho)$ is maximal
in $\pol(\theta)$.
\end{lemma}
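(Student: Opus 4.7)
The plan is to show that under the hypothesis $\rho = \varsigma$, together with all the standing assumptions accumulated before the lemma, $\rho$ falls squarely into type III of Theorem \ref{maintheorem}, so that Proposition \ref{sufficiency-direction} finishes the proof.

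First, I would recall exactly what the preceding lemmas have given us. By Lemma \ref{major-criterion} we may assume $\eta \subseteq \rho$; by Lemma \ref{lem-rec-O} and Lemma \ref{lem-rec-I} we may discard the case $\rho \varsubsetneq \rho_{0,\theta}$; and Lemma \ref{lem-rec-II} further allows us to assume $\rho_{0,\theta} = E_k^h$ and $\rho_{0,\theta}^{l} = E_k^{l}$ for every $l > h$. Specializing the last equality to $l = t$ produces a tuple $(x_1,\ldots,x_t) \in C_0 \times \cdots \times C_{t-1}$ with $\{x_1,\ldots,x_t\}^{h} \subseteq \rho$, which is explicitly noted in the paragraph just before the lemma.

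Second, I would observe that the set $T = \{x_1,\ldots,x_t\}$ is exactly a transversal for the $\theta$-classes in the sense of Definition \ref{def-theta-transversal}(1): its elements lie in pairwise distinct $\theta$-classes and every $h$-tuple built from $T$ lies in $\rho$. By Definition 5.9 of the paper (transversal of order $0$), $T$ is therefore a transversal of order $l-1 = 0$, which is exactly what the notion of weakly $\theta$-closed of order $1$ requires in its transversal condition.

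Third, the hypothesis $\rho = \varsigma$ unwinds, using the identification $\varsigma = \bigcap_{\sigma \in \mathcal{S}_h}(\rho_{1,\theta})_{\sigma}$ noted just before the lemma, to the equality $\rho = \bigcap_{\sigma \in \mathcal{S}_h}(\rho_{l,\theta})_{\sigma}$ with $l = 1$. Together with the transversal $T$ just produced, this is precisely Definition \ref{def-theta-fermeture}(2) for $l = 1$, so $\rho$ is weakly $\theta$-closed of order $1$. Combining this with $\eta \subseteq \rho$ shows that $\rho$ satisfies condition III of Theorem \ref{maintheorem}, so $\rho$ is of type III. Proposition \ref{sufficiency-direction} then yields that $\pol(\theta) \cap \pol(\rho)$ is submaximal in $\pol(\theta)$, i.e.\ maximal in $\pol(\theta)$. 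I do not anticipate a genuine obstacle here: the proof is essentially a bookkeeping step that matches the hypothesis $\rho = \varsigma$ against the already-established definition of weak $\theta$-closure of order $1$, so that type III applies and the heavy lifting done in Proposition \ref{sufficiency-direction} can be cited directly.
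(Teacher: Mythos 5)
Your proposal is correct and follows essentially the same route as the paper: the paper's proof is just the terse version of your argument, asserting that $\rho=\varsigma$ makes $\rho$ weakly $\theta$-closed of order $1$, hence of type III, and then citing Proposition \ref{sufficiency-direction}. You simply make explicit the bookkeeping the paper leaves implicit (the transversal of order $0$ coming from $\rho_{0,\theta}^{t}=E_k^{t}$, the identification $\varsigma=\bigcap_{\sigma\in\mathcal{S}_h}(\rho_{1,\theta})_\sigma$, and the standing assumption $\eta\subseteq\rho$ needed for type III).
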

\begin{proof}
If $\rho= \varsigma$, then $\rho$ is weakly $\theta$-closed of order $1$. Hence $\rho$ is a relation of type III. By Proposition
\ref{sufficiency-direction} $\pol(\theta)\cap \pol(\rho)$ is maximal in $\pol(\theta)$.
\end{proof}
\begin{lemma}\label{lem-rec-I-gamma3}
If $\rho\varsubsetneq\, \varsigma\varsubsetneq E_{k}^{h}$ then $\pol(\theta)\cap \pol(\rho)$ is not maximal in $\pol(\theta)$
\end{lemma}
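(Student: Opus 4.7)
The plan is to interpose the clone $\pol(\theta)\cap\pol(\varsigma)$ strictly between $\pol(\theta)\cap\pol(\rho)$ and $\pol(\theta)$, which will show that the latter cover fails and hence that $\pol(\theta)\cap\pol(\rho)$ is not maximal in $\pol(\theta)$.

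First I would verify the weak containment $\pol(\theta)\cap\pol(\rho)\subseteq\pol(\theta)\cap\pol(\varsigma)$. By the Remark following the definition of $\rho_{i,\theta}$, $\pol(\theta)\cap\pol(\rho)\subseteq\pol(\rho_{1,\theta})$; permuting coordinates (using $\pol(\rho)=\pol(\rho_\sigma)$) gives $\pol(\theta)\cap\pol(\rho)\subseteq\pol((\rho_{1,\theta})_\sigma)$ for every $\sigma\in\mathcal{S}_h$, and since $\varsigma=\bigcap_{\sigma\in\mathcal{S}_h}(\rho_{1,\theta})_\sigma$, intersecting gives the inclusion. The right-hand inclusion $\pol(\theta)\cap\pol(\varsigma)\subseteq\pol(\theta)$ is trivial.

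Next I would prove the two strict inclusions separately. For $\pol(\theta)\cap\pol(\rho)\varsubsetneq\pol(\theta)\cap\pol(\varsigma)$, pick $(u_1,\ldots,u_h)\in\varsigma\setminus\rho$ (nonempty since $\rho\varsubsetneq\varsigma$) and a pair $(a,b)\notin\theta$, and mimic the construction in equation (\ref{function-for-strict-inclusion-f}): define the $h$-ary operation $f(x_1,\ldots,x_h)=u_i$ when $(x_1,\ldots,x_h)$ is $\theta$-related to the tuple with $b$ in position $i$ and $a$ elsewhere, and $f(x_1,\ldots,x_h)=u_1$ otherwise. By construction $f\in\pol(\theta)$, and evaluating $f$ on the $h\times h$ matrix with $a$'s and $b$'s appearing as in (\ref{function-for-strict-inclusion-f}) yields $(u_1,\ldots,u_h)\notin\rho$, showing $f\notin\pol(\rho)$. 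The verification $f\in\pol(\varsigma)$ uses the following point: when $f$ is applied componentwise to tuples in $\varsigma$, either all input columns are $\theta$-related (in which case the output is $\theta$-related to $(u_1,\ldots,u_h)\in\varsigma$, which lies in $\varsigma$ since $\varsigma$ is an intersection of $\theta$-closed relations $(\rho_{1,\theta})_\sigma$), or the output is a constant tuple, which belongs to $\varsigma$ by total reflexivity of $\rho_{1,\theta}$ (inherited from $\rho$).

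For the second strict inclusion $\pol(\theta)\cap\pol(\varsigma)\varsubsetneq\pol(\theta)$, pick $(v_1,\ldots,v_h)\in E_k^h\setminus\varsigma$ (nonempty by the assumption $\varsigma\varsubsetneq E_k^h$) and a pair $(a,b)\notin\theta$, and build an $h$-ary operation $g\in\pol(\theta)$ in the same $\theta$-class-driven style as (\ref{function-for-strict-inclusion-h-in-theta}), arranging that $g$ evaluated on the standard matrix of $a$'s and $b$'s produces $(v_1,\ldots,v_h)\notin\varsigma$ from a matrix whose columns lie in $\varsigma$ (which is automatic since $\tau_h^{E_k}\subseteq\varsigma$). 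This forces $g\notin\pol(\varsigma)$.

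The main obstacle will be the $\varsigma$-preservation check for $f$ in the first strict inclusion: one must confirm that the ``small'' image set $\{u_1,\ldots,u_h,u_1\}$ of $f$ is recognized as a $\varsigma$-tuple because $(u_1,\ldots,u_h)\in\varsigma$, and that every other branch of $f$ either outputs on the diagonal (handled by total reflexivity) or outputs inside a single $\theta$-class (handled by the $\theta$-closure built into each $(\rho_{1,\theta})_\sigma$). Once these two strict inclusions are established, combining them gives $\pol(\theta)\cap\pol(\rho)\varsubsetneq\pol(\theta)\cap\pol(\varsigma)\varsubsetneq\pol(\theta)$, completing the proof.
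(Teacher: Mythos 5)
Your overall strategy is the same as the paper's: interpose $\pol(\theta)\cap\pol(\varsigma)$ strictly between $\pol(\theta)\cap\pol(\rho)$ and $\pol(\theta)$, using the operation of (\ref{function-for-strict-inclusion-f}) once with a tuple from $\varsigma\setminus\rho$ and once with a tuple from $E_{k}^{h}\setminus\varsigma$. Your derivation of the weak inclusion from $\pol(\rho)\cap\pol(\theta)\subseteq\pol((\rho_{1,\theta})_{\sigma})$ together with $\varsigma=\bigcap_{\sigma\in\mathcal{S}_{h}}(\rho_{1,\theta})_{\sigma}$ is cleaner than the paper's element chase and is fine.

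However, your verification that $f\in\pol(\varsigma)$ is not correct as written, in two respects. First, the relations $(\rho_{1,\theta})_{\sigma}$ are \emph{not} $\theta$-closed (one coordinate is exempt from $\theta$-replacement); if they were, $\varsigma$ would be $\theta$-closed and $\rho$ would already have been handled by the earlier lemmas. Second, your dichotomy (``all input columns are $\theta$-related'' versus ``the output is a constant tuple'') is not exhaustive: if two rows of the input matrix fall into the same special $\theta$-class of $E_{k}^{h}$ while the remaining rows are scattered, the output is neither constant nor coordinatewise $\theta$-related to $(u_{1},\ldots,u_{h})$. The repair is simpler than what you wrote: every value of $f$ lies in $\{u_{1},\ldots,u_{h}\}$, so the output tuple is $(u_{i_{1}},\ldots,u_{i_{h}})$ for some indices $i_{j}\in\{1,\ldots,h\}$. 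If the $i_{j}$ are pairwise distinct, the output is a permutation of $(u_{1},\ldots,u_{h})\in\varsigma$ and lies in $\varsigma$ because $\varsigma$ is totally symmetric (being $\bigcap_{\sigma\in\mathcal{S}_{h}}(\rho_{1,\theta})_{\sigma}$); if two indices coincide, the output has a repeated entry and lies in $\tau_{h}^{E_{k}}\subseteq\rho\subseteq\varsigma$. With that substitution your proof goes through and coincides with the paper's.
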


\begin{proof}
We will prove that $\pol(\rho)\cap \pol(\theta)\varsubsetneq \pol(\theta)\cap \pol(\, \varsigma)\varsubsetneq \pol(\theta).$ Let  $f\in \pol(\theta)\cap
\pol(\rho)$, $n$-ary. \\ Let $\left(a_{11}, a_{21},\ldots, a_{h1}\right), \ldots ,\left(a_{1n}, a_{2n}, \ldots, a_{sn}\right)\in \, \varsigma$, we will
show that
$$\left(f(a_{11}, \ldots ,a_{1n}), f(a_{21}, \ldots ,a_{2n}), \ldots, f(a_{h1}, \ldots ,a_{hn}) \right)\in \, \varsigma.$$

Firstly, we show that $\left(f(a_{11}, \ldots ,a_{1n}), \ldots, f(a_{h1}, \ldots ,a_{hn}) \right)\in \rho_{1,\theta}$. Since $\left(a_{1i},\ldots,
a_{hi}\right)\in \, \varsigma$ for $1\leq i\leq n$, there exist $u_{ji}\in[a_{ji}]_{\theta}$, $2\leq j\leq h$ such that $\left(a_{1i}, u_{2i},\ldots,
u_{hi}\right)\in\rho$ for $1\leq i\leq n$. Therefore $$\left(f(a_{11}, \ldots ,a_{1n}), f(u_{21}, \ldots ,u_{2n}), \ldots, f(u_{h1}, \ldots ,u_{hn})
\right)\in\rho$$ and $f(u_{j1}, \ldots ,u_{jn})\theta f(a_{j1}, \ldots ,a_{jn})$ for $2\leq j \leq n$. Hence $$\left(f(a_{11}, \ldots ,a_{1n}), f(a_{21},
\ldots ,a_{2n}), \ldots, f(a_{h1}, \ldots ,a_{hn}) \right)\in \rho_{1,\theta}.$$ Secondly, we show that $\left(f(a_{11}, \ldots ,a_{1n}), \ldots, f(a_{h1},
\ldots ,a_{hn}) \right)\in (\rho_{1,\theta})_{\sigma}$ for $\sigma\in \mathcal{S}_{h}$. Let $\sigma\in \mathcal{S}_{h}$. Since $\left(a_{1i},\ldots,
a_{hi}\right)\in \, (\rho_{1,\theta})_{\sigma}$ for $1\leq i\leq n$, it follows that $\left(a_{\sigma^{-1}(1)i},\ldots, a_{\sigma^{-1}(h)i}\right)\in \,
\rho_{1,\theta}$ for $1\leq i\leq n$. Hence $$\left(f(a_{\sigma^{-1}(1)1}, \ldots ,a_{\sigma^{-1}(1)n}), \ldots, f(a_{\sigma^{-1}(h)1}, \ldots
,a_{\sigma^{-1}(h)n}) \right)\in \rho_{1,\theta}.$$ Therefore  $\left(f(a_{11}, \ldots ,a_{1n}), \ldots, f(a_{h1}, \ldots ,a_{hn}) \right)\in
(\rho_{1,\theta})_{\sigma}.$\\
Thus $\left(f(a_{11}, \ldots ,a_{1n}), \ldots, f(a_{h1}, \ldots ,a_{hn}) \right)\in \, \varsigma.$\\
And it follows that  $\pol(\rho)\cap \pol(\theta)\subseteq \pol(\theta)\cap \pol(\, \varsigma).$ \\
As  $\rho\varsubsetneq\, \varsigma$ we have $ \varsigma\setminus
\rho\neq \emptyset;$ then with $\left( u_1, u_2, \ldots,
u_h\right)\in\, \varsigma\setminus \rho$ and $(a,b)\notin \theta$
fixed; using the $h$-ary operation $f$ defined by
(\ref{function-for-strict-inclusion-f}), and the same argument used
in the proof of Lemma \ref{lem-rec-I}, it
follows that $f\in \pol(\, \varsigma)$ and $f\notin \pol(\rho).$ Then $\pol(\rho)\cap \pol(\theta)\varsubsetneq \pol(\theta)\cap \pol(\, \varsigma).$ \\
It is obvious to see that $\pol(\theta)\cap \pol(\, \varsigma)\varsubsetneq
\pol(\theta)$ (because $\, \varsigma\neq E_{k}^{h}$). \\
Let us take a fixed element $\left(u_1, u_2, \ldots, u_h\right)\in
E_{k}^{h}\setminus\, \varsigma$ and  $(a,b)\notin \theta$. Using the
operation $f$ define above, we easily obtain $f\in \pol(\theta)$.\\
On the other hand
$$\left(\begin{array}{cccccc} a&a&a&\cdots&a&a \\a&b&a&\cdots&a&a \\&&\cdots&\cdots&& \\a&a&\cdots&a&b&a \\a&a&\cdots&a&a&b \\
\end{array}\right)\subseteq \, \varsigma$$ but by the definition of $f$ we have
$$\left(u_1, u_2, \ldots, u_h\right)=\left(f(a,a, \ldots ,a), f(a,b,a, \ldots ,a), \ldots, f(a,a, \ldots ,a,a,b)\right)\notin \, \varsigma.$$ It follows that $f\notin \pol(\, \varsigma);$
thus $\pol(\rho)\cap \pol(\theta)\varsubsetneq \pol(\, \varsigma)\cap \pol(\theta)\varsubsetneq \pol(\theta)$.

\end{proof}

\begin{lemma}\label{lem-rec-II-gamma3}\text{ }
If $\rho\varsubsetneq\, \varsigma= E_{k}^{h}$  and there exists
$h<l\leq t$ with $\, \varsigma^{l}\neq E_k^{l}$, then
$\pol(\theta)\cap \pol(\rho)$ is not maximal in $\pol(\theta)$ where
$\,
\varsigma^{l}=\underset{\sigma\in\mathcal{S}_{l}}\bigcap(\rho_{1,\theta}^{l})_{\sigma}$
with  $$\, \rho_{1,\theta}^{l}=\left\{\left(a_1, \ldots,  a_l
\right)\in E_{k}^{l} / \exists u_{i}\in [a_{i}]_{\theta}, 2\leq
i\leq l: \{a_1;u_{2},u_{3},\ldots,u_{l}\}^{h}\subseteq
 \rho\right\}.$$
\end{lemma}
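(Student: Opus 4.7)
The plan is to mimic the proof strategy of Lemma \ref{lem-rec-II}, with $\varsigma$ and $\varsigma^l$ playing the roles of $\rho_{0,\theta}$ and $\rho_{0,\theta}^l$. Set
$$m := \min\{l \in \mathbb{N} : h < l \leq t \text{ and } \varsigma^l \neq E_{k}^{l}\},$$
and aim to establish the chain of strict inclusions
$$\pol(\rho) \cap \pol(\theta) \varsubsetneq \pol(\varsigma^{m}) \cap \pol(\theta) \varsubsetneq \pol(\theta),$$
which will rule out the maximality of $\pol(\rho) \cap \pol(\theta)$ in $\pol(\theta)$.

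The first step is the non-strict inclusion $\pol(\rho) \cap \pol(\theta) \subseteq \pol(\varsigma^{m})$. Given $f \in \pol(\rho) \cap \pol(\theta)$ of arity $n$ and $n$ columns $(a_{1j},\ldots,a_{mj}) \in \varsigma^{m}$, I would unpack the definition $\varsigma^{m} = \bigcap_{\sigma \in \mathcal{S}_m}(\rho_{1,\theta}^{m})_{\sigma}$: for each permutation $\sigma$ and each column $j$, one obtains witnesses $u_{ij}^{\sigma} \in [a_{ij}]_{\theta}$ (for $i \neq \sigma^{-1}(1)$) such that the appropriate set of size $m$ has all its $h$-subtuples in $\rho$. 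Applying $f$ coordinatewise, preservation of $\rho$ keeps these $h$-subtuples in $\rho$, and preservation of $\theta$ places the values $f(u_{ij}^{\sigma},\ldots)$ in $[f(a_{ij},\ldots)]_{\theta}$. Running this argument for each $\sigma$ independently delivers the witnesses needed to place the output $m$-tuple in every $(\rho_{1,\theta}^{m})_{\sigma}$, hence in $\varsigma^{m}$.

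For the first strict inclusion, I would reuse the $h$-ary operation $f$ from formula (\ref{function-for-strict-inclusion-f}) with $(v_{1},\ldots,v_{h}) \in E_{k}^{h}\setminus \rho$ and $(a,b) \notin \theta$; exactly as in Lemma \ref{lem-rec-II}, one checks that $f \in \pol(\theta)$ and $f \notin \pol(\rho)$. The new point is $f \in \pol(\varsigma^{m})$: because $m > h$, any output $m$-tuple has entries in $\{v_{1},\ldots,v_{h}\}$ and thus admits a repetition, and the resulting set of distinct values has cardinality at most $h-1$, which together with the total reflexivity of $\rho$ (and the diagonal choice $u_{i}=a_{i}$) forces the output tuple into every $(\rho_{1,\theta}^{m})_{\sigma}$. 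For the second strict inclusion $\pol(\varsigma^{m}) \cap \pol(\theta) \varsubsetneq \pol(\theta)$, I would pick $(u_{1},\ldots,u_{m}) \in E_{k}^{m}\setminus \varsigma^{m}$ (available by hypothesis) together with $(a,b) \notin \theta$, and use the $m$-ary operation of equation (\ref{function-for-strict-inclusion-h-in-theta}); it preserves $\theta$ because it is constant on each $\theta$-class of $E_{k}^{m}$, yet it maps the $m$ canonical single-$b$ columns, which are totally-reflexive and thus lie in $\varsigma^{m}$, to $(u_{1},\ldots,u_{m}) \notin \varsigma^{m}$.

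The main technical obstacle is the first inclusion: the witnesses depend on $\sigma$, so one must be careful to handle each permutation independently rather than seeking a single coherent witness system. A secondary delicate point is verifying the total-reflexivity statement used for $f \in \pol(\varsigma^{m})$; here one leans on the fact that $\rho$ is totally reflexive and totally symmetric, so any collection of at most $h-1$ distinct elements automatically yields all its $h$-subtuples inside $\rho$, and the pigeonhole principle (guaranteed by $m > h$) confines the outputs of $f$ to such a collection. If the tuple $(v_1,\ldots,v_h)$ could be produced exhaustively at the output, the minor refinement of routing two canonical patterns to $v_h$, as already done in formula (\ref{function-for-strict-inclusion-f}), restores the pigeonhole argument.
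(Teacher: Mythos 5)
Your overall skeleton matches the paper's: the same minimal $m$, the same chain $\pol(\rho)\cap\pol(\theta)\varsubsetneq\pol(\varsigma^{m})\cap\pol(\theta)\varsubsetneq\pol(\theta)$, the same operations $f$ and $h$, and your treatment of the non-strict inclusion and of the second strict inclusion agrees with what the paper does. The genuine gap is in your justification that $f\in\pol(\varsigma^{m})$. The output $m$-tuple of $f$ has entries in $\{v_{1},\ldots,v_{h}\}$, and $m>h$ does force a repeated entry, but it does \emph{not} force the set of distinct values to have size at most $h-1$: take as rows of the input matrix the $h$ canonical patterns $(a,\ldots,a),(a,b,a,\ldots,a),\ldots,(a,\ldots,a,b)$ followed by $m-h$ copies of $(a,\ldots,a)$. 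Each column is then an $m$-tuple over $\{a,b\}$ and lies in $\varsigma^{m}$ with trivial witnesses, yet the output is $(v_{1},v_{2},\ldots,v_{h},v_{1},\ldots,v_{1})$, whose value set is all of $\{v_{1},\ldots,v_{h}\}$ and which contains the $h$-subtuple $(v_{1},\ldots,v_{h})\notin\rho$. So the diagonal choice $u_{i}=a_{i}$ does not place this output in $\rho_{1,\theta}^{m}$, and your pigeonhole argument breaks on exactly the instance you must handle.

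Your proposed repair, collapsing two canonical patterns onto $v_{h}$ so that the range of $f$ has only $h-1$ elements, makes matters worse: any operation whose range has fewer than $h$ elements preserves \emph{every} totally reflexive $h$-ary relation, since each output $h$-tuple then has a repeated entry and lies in $\tau_{h}^{E_{k}}\subseteq\rho$. The modified $f$ would therefore belong to $\pol(\rho)$ and you would lose the other half of the first strict inclusion. The paper avoids both problems by using the hypothesis $\varsigma=E_{k}^{h}$ at precisely this point: since $(v_{1},\ldots,v_{h})\in\varsigma$, for each $j$ there are $\theta$-shifts $a_{ij}\in[v_{i}]_{\theta}$ (for $i\neq j$) with $(v_{j},a_{j_{1}j},\ldots,a_{j_{h-1}j})\in\rho$, and total symmetry together with total reflexivity then yield $\{v_{j},a_{j_{1}j},\ldots,a_{j_{h-1}j}\}^{h}\subseteq\rho$; these nontrivial witnesses give $\{v_{1},\ldots,v_{h}\}^{m}\subseteq\varsigma^{m}$, which is what actually puts every output tuple of the unmodified $f$ into $\varsigma^{m}$. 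You need this step; the trivial-witness pigeonhole cannot replace it.
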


\begin{proof}
We set $m:=min\{l\in \{0;1;\cdots;t\}\setminus\{0;1;2; \ldots
;h\}/\, \varsigma^{l}\neq E_k^{l}\}$. We will  prove that
$\pol(\rho)\cap \pol(\theta)\varsubsetneq \pol(\, \varsigma^{m})\cap
\pol(\theta)\varsubsetneq \pol(\theta)$.\\ We use the similar
argument as in the proof of Lemma \ref{lem-rec-I-gamma3} to show
that $\pol(\rho)\cap \pol(\theta)\subseteq \pol(\, \varsigma^{m})\cap \pol(\theta)$.\\
Let $\left( u_1, u_2, \ldots, u_h\right)\in\, \varsigma\setminus \rho$ and $(a,b)\notin \theta$ fixed.

Using the $h$-ary operation $f$ defined by (\ref{function-for-strict-inclusion-f}), we have again $f\notin  \pol(\rho)$ and $f\in \pol(\, \varsigma^{m})$;
since $\left( u_1, u_2, \ldots, u_h\right)\in E_{k}^{h}=\, \varsigma$, there exist $ a_{ij}\in [u_{i}]_{\theta}, i,j\in\{1,..,s\}\text{ with } i\neq j$
such that for all $j\in\{1, \ldots ,h\},$
$$ \left(u_j,  a_{j_{1}j}, a_{j_{2}j}, \ldots, a_{j_{h-1}j}\right)\in \rho$$  with $\{j_{1}, \ldots ,j_{h-1}\}=\{1, \ldots ,h\}\setminus\{j\}$.
Then it follows that $\{u_1,u_2, \ldots ,u_h\}^{m}\subseteq \, \varsigma^m$. \\
Moreover, as we have $\varsigma^{m}\neq E_{k}^{m}$, there exists $\left(u_{1}, \ldots, u_{m}\right)\in E_k^{m}\setminus \, \varsigma^{m} $. Given
$\left(u_{1}, \ldots, u_{m}\right)\in E_k^{m}\setminus \, \varsigma^{m}$ and  $(a,b)\notin \theta$.

Using the $m$-ary operation $h$ defined by (\ref{function-for-strict-inclusion-h-in-theta}), and the same argument used in the proof of Lemma
\ref{lem-rec-II}, we have $h\in \pol(\theta)$ but $h\notin \pol(\,\varsigma^{m})$. Thus $\pol(\rho)\cap \pol(\theta)\varsubsetneq \pol(\,
\varsigma^{m})\cap \pol(\theta)\varsubsetneq \pol(\theta)$.
\end{proof}

The previous lemma suggests us to suppose that for each
$l\in\{h;\cdots; t\}$, $\varsigma^{l}= E_k^l$.

Let $m=max\{|C_i|; 0\leq i\leq t-1\}$ and we denote by $\, \gamma'$
the relation
$$\, \gamma'=\left\{\left(a_1, \ldots, a_m, a_{m+1}, \ldots,  a_{m+t-1} \right)\in E_{k}^{m+t-1} / \forall \{i;j\}\subseteq \{1;\cdots;m\}, a_i\theta a_j;
\right.$$
$$\left. \qquad \exists u_{i}\in [a_{i}]_{\theta} ,m+1\leq i \leq m+t-1, \forall 1\leq i\leq m,\right.$$
$$\left.\{a_i;u_{m+1}; \ldots ;u_{m+t-1}\}^{h}\subseteq \rho\right\}.$$
We have $\pol(\, \gamma')\subseteq \pol(\theta)$. We define two relations $\epsilon_{=}$ and $\epsilon'_{\theta}$ on $\{1;\cdots;m+t-1\}$ by:
$$(i,j)\in\epsilon_{=}\Leftrightarrow i=j \text { and } (i,j)\in \epsilon'_{\theta}\Leftrightarrow (i=j \text{ or } \{i,j\}\subseteq \{1; \ldots ;m\}).$$
$\epsilon_{=}$ and $\epsilon'_{\theta}$ are  equivalence relations and $\, \gamma' \subseteq D_{\epsilon_{=}\epsilon'_{\theta}}$.

\begin{lemma}\label{lem-rec-II-gamma3'}\text{ }
If $\forall l\in\{h,h+1,...,t\}$, $\, \varsigma^{l}= E_{k}^{h}$ and $\, \gamma'\neq D_{\epsilon_{=}\epsilon'_{\theta}}$, then $\pol(\theta)\cap \pol(\rho)$
is not maximal in $\pol(\theta)$.
\end{lemma}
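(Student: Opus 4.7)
The plan is to prove non-maximality by exhibiting an intermediate clone of the form $\pol(\gamma')\cap\pol(\theta)$, following exactly the template of Lemmas \ref{lem-rec-II}, \ref{lem-rec-I-gamma3}, and \ref{lem-rec-II-gamma3}. That is, I would establish the chain
\[\pol(\rho)\cap\pol(\theta)\varsubsetneq \pol(\gamma')\cap\pol(\theta)\varsubsetneq \pol(\theta).\]

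First I would show the containment $\pol(\rho)\cap\pol(\theta)\subseteq\pol(\gamma')$. Given an $n$-ary $f\in\pol(\rho)\cap\pol(\theta)$ and tuples $\mathbf{a}^{(1)},\ldots,\mathbf{a}^{(n)}\in\gamma'$, I would extract for each index $j$ the witnesses $u_i^{(j)}\in[a_i^{(j)}]_{\theta}$ $(m+1\le i\le m+t-1)$ supplied by the definition of $\gamma'$. Preservation of $\theta$ keeps the first $m$ coordinates of the image tuple within a single $\theta$-class and produces witnesses $f(u_i^{(1)},\ldots,u_i^{(n)})\in[f(a_i^{(1)},\ldots,a_i^{(n)})]_\theta$ for the remaining coordinates. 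Preservation of $\rho$ then delivers $\{f(a_i^{(\cdot)}),f(u_{m+1}^{(\cdot)}),\ldots,f(u_{m+t-1}^{(\cdot)})\}^h\subseteq\rho$ for every $1\le i\le m$, so the image tuple lies in $\gamma'$.

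For strictness of the first inclusion I would fix $(u_1,\ldots,u_h)\in E_k^h\setminus\rho$ (possible since $\rho\varsubsetneq E_k^h$) and a pair $(a,b)\notin\theta$, and use the $h$-ary operation $f$ defined in (\ref{function-for-strict-inclusion-f}). Failure of $\rho$-preservation is immediate on the same near-diagonal columns exploited in Lemma \ref{major-criterion}. Invariance under $\theta$ follows from the fact that $f$ is constant on each $\theta$-block of its domain. The content is to check $f\in\pol(\gamma')$: here I would invoke the standing hypothesis $\varsigma^l=E_k^l$ for every $l\in\{h,\ldots,t\}$, which guarantees, within any selection of elements meeting the $t$ $\theta$-classes, a full $h$-clique in $\rho$ after a suitable $\theta$-replacement. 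Combined with the fact that the range of $f$ is contained in $\{u_1,\ldots,u_h\}\cup\{c\}$ for a chosen central element $c$, this yields the representatives demanded by the definition of $\gamma'$. For the second strict inclusion I would take $(w_1,\ldots,w_{m+t-1})\in D_{\epsilon_{=}\epsilon'_{\theta}}\setminus\gamma'$, nonempty by assumption, together with $(a,b)\notin\theta$, and define an $(m+t-1)$-ary operation $h$ in the style of (\ref{function-for-strict-inclusion-h-in-theta}), sending each $\theta$-pattern of a column with a single $b$ in position $i$ to $w_i$. Invariance under $\theta$ is automatic; on the standard pattern columns, which lie in $\gamma'$ again by $\varsigma^l=E_k^l$ for $l\in\{h,\ldots,t\}$, the operation returns $(w_1,\ldots,w_{m+t-1})\notin\gamma'$, showing $h\notin\pol(\gamma')$.

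The main technical obstacle is the verification that $f$ preserves $\gamma'$ in the first strict inclusion: the witnesses $u_i$ mandated by the definition of $\gamma'$ must be assembled coherently across all $n$ input columns, and it is precisely here that the hypothesis $\varsigma^l=E_k^l$ for every $l\in\{h,\ldots,t\}$ is used in an essential way, going beyond the two-case analyses of the preceding lemmas. Once this step is settled, the remaining strictness arguments parallel verbatim those of Lemmas \ref{lem-rec-I-gamma3} and \ref{lem-rec-II-gamma3}, with only bookkeeping adjustments to accommodate the arity $m+t-1$ and the mixed diagonal structure of $D_{\epsilon_{=}\epsilon'_{\theta}}$.
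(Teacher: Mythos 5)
Your proposal is correct and follows essentially the same route as the paper's proof: the same intermediate clone $\pol(\gamma')\cap\pol(\theta)$ (the paper writes $\pol(\gamma')$, having already noted $\pol(\gamma')\subseteq\pol(\theta)$), the same witness-transport argument for the containment, and the same separating operations $f$ from (\ref{function-for-strict-inclusion-f}) and the $(m+t-1)$-ary $h$ in the style of (\ref{function-for-strict-inclusion-h-in-theta}). The only cosmetic deviations are that the range of $f$ is just $\{u_1,\ldots,u_h\}$ (no central element is needed), and that the pattern columns lie in $\gamma'$ already by total reflexivity of $\rho$ (since they use only the two values $a,b$ and $h\geq 3$), so the hypothesis $\varsigma^{l}=E_{k}^{l}$ is not needed at that particular step.
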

\begin{proof} We just have to prove that $\pol(\rho)\cap \pol(\theta)\varsubsetneq \pol(\, \gamma') \varsubsetneq \pol(\theta)$. \\
Let $f\in \mathcal{O}^{n}(E_k)$ be an $n$-ary operation on $E_k$ such that $f\in \pol(\theta)\cap \pol(\rho)$. Let $$\left(a_{11}, \ldots, a_{m+t-1 1}
\right), \ldots ,\left(a_{1n}, \ldots, a_{m+t-1 n} \right)\in \, \gamma'.$$ According to the  definition of $\, \gamma'$, for all $1\leq j \leq n$, for all
$\alpha,\beta\in \{1;\cdots;m\}$, we have $[a_{\alpha j}]_{\theta}=[a_{\beta j}]_{\theta}$ and for all $j\in \{1;\cdots;n\}$, there exists $u_{rj}\in
[a_{rj}]_{\theta}$ with $r\in \{m+1;\cdots;m+t-1\}$ such that for all  $i\in \{1;\cdots;m\}$,
$$\{a_{ij};u_{m+1j};u_{m+2j};\ldots;u_{m+t-1j}\}^{h}\subseteq
 \rho.$$  Since $f\in \pol(\theta)\cap
\pol(\rho)$, for all $\alpha,\beta\in \{1;\cdots;m\}$, $[f(a_{\alpha 1}, \ldots ,a_{\alpha n})]_{\theta}=[f(a_{\beta 1}, \ldots ,a_{\beta n})]_{\theta}$
and for all  $r\in \{m+1;\cdots;m+t-1\}$, $f(u_{r1}, \ldots ,u_{rn})\in [f(a_{r1}, \ldots ,a_{rn})]_{\theta}.$ It follows that for all $i\in\{1;\cdots;m\}$
 $$\{f(a_{i1}, \ldots ,a_{in});f(u_{m+11}, \ldots ,u_{m+1n}); \ldots ;f(u_{m+t-11}, \ldots ,u_{m+t-1n})\}^{h}\subseteq\rho
 .$$
 Consequently $$\left(f(a_{11}, \ldots ,a_{1n}), \ldots, f(a_{m+t-11}, \ldots ,a_{m+t-1n})\right)\in \, \gamma'$$ and  then
$f\in \pol(\, \gamma').$


Let $\left(u_1, u_2, \ldots, u_h\right)\in\, E_{k}^{h}\setminus \rho$ and $(a,b)\notin \theta$ fixed. Using the $h$-ary operation $f$ defined by
(\ref{function-for-strict-inclusion-f}), we have $f\notin  \pol(\rho)$ and $f\in \pol(\, \gamma')$; since $\left(u_1, u_2, \ldots, u_h\right)\in
E_{k}^{h}=\,
\varsigma$, it follows that there exist $ a_{ij}\in [u_{i}]_{\theta}; i,j\in\{1;\cdots; h\}\text{ with } i\neq j$  such that for all $j\in\{1;\cdots; h\},$%
$$ \left(u_j, a_{j_{1}j}, a_{j_{2}j}, \ldots, a_{j_{h-1}j} \right)\in \rho,$$ \text{ with } $\{j_{1}, \ldots ,j_{h-1}\}=\{1;\cdots; h\}\setminus\{j\}$.
It follows that $$\bigcup\limits_{i\in\{1;\cdots;h\}} \{u_i\}^{m}\times\{u_{l}; l\in \{1;\cdots;h\}\setminus\{i\}\}^{t-1}\subseteq\, \gamma'.$$ From the
fact that $\, \gamma'\neq D_{\epsilon_{=}\epsilon'_{\theta}}$, we can deduce that $ D_{\epsilon_{=}\epsilon'_{\theta}}\setminus \, \gamma'\neq \emptyset
$.\\ Given  $\left(v_{1}, \ldots, v_{m+t-1}\right)\in D_{\epsilon_{=}\epsilon'_{\theta}}\setminus \, \gamma'$, $(a,b)\notin \theta$ and consider the
following $(m+t-1)$-ary operation defined on $E_k$ by: $$ h(x_1, \ldots ,x_{m+t-1})=  \left\{\begin{array}{ll}
                                                 v_1& \text{ if } (x_1, \ldots ,x_{m+t-1})\theta (a, \ldots ,a,a)=a_1 \\
                                                 v_{m+1}& \text{ if } (x_1, \ldots ,x_{m+t-1})\theta (a,b,a, \ldots ,,a)=a_2 \\
                                                 v_{m+2} &\text{ if } (x_1, \ldots ,x_{m+t-1})\theta (a,a,b,a \ldots ,a)=a_3 \\
                                                 &\vdots\\
                                                 v_{m+t-1} &\text{ if } (x_1, \ldots ,x_{m+t-1})\theta (a,a,a, \ldots a,b)=a_t\\
                                                 v_{m+t-1} &\text{ otherwise. }
                                               \end{array}
      \right.
$$
Since $a_i\theta a_j\Rightarrow i=j$, $h\in \pol(\theta).$ From the fact that $\{a,b\}^{h}\subseteq\rho$ we have $h\notin
\pol(\gamma').$ Indeed, \\
$$\left(\begin{array}{c} a \\\vdots\\a \\\vdots\\a\\a\\a \\a\\a \\
\end{array}\right), \left(\begin{array}{c} a \\\vdots\\a \\b\\a \\\vdots\\a\\a\\a \\
\end{array}\right), \left(\begin{array}{c} a \\\vdots\\a \\a\\b\\a\\\vdots \\a\\a \\
\end{array}\right), \ldots ,\left(\begin{array}{c}a \\\vdots\\ a \\a\\a\\\vdots\\a\\a \\b \\
\end{array}\right) \in \, \gamma'  $$
 but
$$\left(v_1,\ldots,v_1, v_{m+1},\ldots, v_{m+t-1}\right)=\left(h(a_1), \ldots, h(a_1), h(a_2), \ldots, h(a_{t})\right)\notin \, \gamma'.$$
\end{proof}
In our next step, we assume that
\begin{equation}\label{transversale-classe}
\, \gamma'= D_{\epsilon_{=}\epsilon'_{\theta}}\end{equation} i.e.,  for all $i\in\{0;\cdots; t-1\},$ there exists $u_{ji}\in C_j, j\in\{0;\cdots;
t-1\}\setminus \{i\}$ such that for all $a\in C_{i},$ $\{a;u_{1i}; \ldots ;u_{i-1 i},u_{i+1 i}; \ldots ;u_{ti}\}^{h}\subseteq\rho.$ Let $\,\zeta$ be the
relation defined by:
$$\,\zeta=\left\{\mathbf{a}\in E_{k}^{h}| \exists u_i\in[a_i]_{\theta},\right. \left.\text{ such that } \left(a_i, u_{i_1},
\ldots, u_{i_{h-1}}\right)\in \rho, i\in\{1;\cdots;h\} \right. $$
$$\left.\text{ and } \{i_1; \ldots
;i_{h-1}\}=\{1;\cdots;h\}\setminus\{i\}\right\}.$$ Our goal is to
show that  if $\pol(\theta)\cap \pol(\rho)$ is maximal in
$\pol(\theta)$, then $\,\zeta=E_{k}^{h}$. Before that, let us prove
the following result.
\begin{lemma}\label{lem19}
If $\rho=\,\zeta$, then $\rho=E_{k}^{h}$.
\end{lemma}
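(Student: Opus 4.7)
The plan is to prove $\zeta = E_k^h$, which together with the hypothesis $\rho = \zeta$ immediately yields $\rho = E_k^h$. The first reduction I would make is to record that $\rho \subseteq \zeta$ trivially: given $(a_1,\ldots,a_h)\in\rho$, for each index $i$ choose $u_j := a_j$ for $j\neq i$; total symmetry of $\rho$ (since $\rho$ is central) shows that $(a_i,u_{i_1},\ldots,u_{i_{h-1}})\in\rho$, verifying the definition of $\zeta$. Hence the problem reduces to showing that every $h$-tuple over $E_k$ lies in $\zeta$.

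Take an arbitrary $(a_1,\ldots,a_h)\in E_k^h$ and split into two cases. In the first case, assume two distinct indices $i,j$ satisfy $a_i\,\theta\,a_j$. Then, by Lemma~\ref{major-criterion} we have $\eta\subseteq\rho$, and total symmetry of $\rho$ forces $(a_1,\ldots,a_h)\in\rho\subseteq\zeta$.

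In the second case, all components $a_1,\ldots,a_h$ sit in pairwise distinct $\theta$-classes; this is possible because Remark~\ref{arity-criterion} gives $h\le t$. Let $a_i\in C_{r_i}$ with $r_1,\ldots,r_h$ distinct. For each fixed $i\in\{1,\ldots,h\}$, apply hypothesis~(\ref{transversale-classe}) to the class $C_{r_i}$ to obtain elements $u_{s,r_i}\in C_s$ (for $s\in\{0,\ldots,t-1\}\setminus\{r_i\}$) with the property that for every $a\in C_{r_i}$ the set $\{a\}\cup\{u_{s,r_i}:s\neq r_i\}^h\subseteq\rho$. Setting $a:=a_i$ and, for each $j\neq i$, $v_j:=u_{r_j,r_i}\in C_{r_j}=[a_j]_\theta$, every $h$-tuple with components drawn from $\{a_i\}\cup\{v_j:j\neq i\}$ lies in $\rho$. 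In particular, $(a_i,v_{j_1},\ldots,v_{j_{h-1}})\in\rho$ where $\{j_1,\ldots,j_{h-1}\}=\{1,\ldots,h\}\setminus\{i\}$. Since this is exactly the witness required by the definition of $\zeta$ at position $i$, and $i$ was arbitrary, we conclude $(a_1,\ldots,a_h)\in\zeta$.

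Combining the two cases gives $\zeta=E_k^h$, so $\rho=\zeta=E_k^h$ as claimed. The only nontrivial point is the bookkeeping in the second case: aligning the transversal indices $s\in\{0,\ldots,t-1\}\setminus\{r_i\}$ provided by~(\ref{transversale-classe}) with the tuple positions $j\in\{1,\ldots,h\}\setminus\{i\}$ via the map $j\mapsto r_j$, which is injective precisely because Case~B assumes the classes $r_1,\ldots,r_h$ are distinct. No near-unanimity arguments or Baker--Pixley machinery are needed; everything follows from the hypotheses accumulated in the preceding lemmas.
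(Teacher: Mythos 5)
Your proof is correct and follows essentially the same route as the paper's: the same two-case split (two components $\theta$-related, handled via $\eta\subseteq\rho$; all components in pairwise distinct $\theta$-classes, handled via the transversal hypothesis (\ref{transversale-classe})), producing the same witnesses for membership in $\zeta$. The only cosmetic difference is that you isolate the inclusion $\rho\subseteq\zeta$ and phrase the conclusion as $\zeta=E_{k}^{h}$, which the paper leaves implicit.
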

\begin{proof} We suppose that $\rho=\,\zeta$. Let $a=(a_1,a_2, \ldots ,a_s)\in E_{k}^{h}.$
If there exists $\{i;j\}\subseteq\{1;\cdots;h\}$ with  $i\neq j$ such that $(a_i,a_j)\in\theta$ then $a\in \rho$ since $\eta\subseteq \rho.$ Else,
according to (\ref{transversale-classe}), for all $i\in\{1,...,t\}$, there exist $u_{ji}\in C_j, j\in\{0;\cdots; t-1\}\setminus \{i\}$ such that for all
$b\in C_{i}$, $$\{b;u_{0i}; \ldots ;u_{i-1i},u_{i+1i}; \ldots ;u_{t-1 i}\}^{h}\subseteq\rho $$ where  $C_j=[a_j]_{\theta}, j\in\{1;\cdots;h\}$. Hence
$$(C_i\cup\{u_{0i},...,u_{t-1 i}\}\setminus\{u_{ii}\})^{h}\subseteq\rho, 0\leq i\leq t-1.$$ Therefore
$$\left(a_1,  a_2, \ldots, a_h\right)\in \,\zeta=\rho.$$ It follows that $\rho= E_{k}^{h}$.
\end{proof}

From Lemma \ref{lem19} and the fact that $\rho\neq E_{k}^{h}$,   we obtain $\rho\varsubsetneq \,\zeta\subseteq E_{k}^{h}$. Now we can prove the following
lemma.
\begin{lemma}\label{lem-rec-O-gamma4} If $\,\zeta\neq E_{k}^{h}$, then $\pol(\theta)\cap \pol(\rho)$ is not maximal in $\pol(\theta)$.
\end{lemma}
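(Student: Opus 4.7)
The plan is to exhibit an intermediate clone, namely $\pol(\theta)\cap\pol(\zeta)$, and show that it sits strictly between $\pol(\theta)\cap\pol(\rho)$ and $\pol(\theta)$. By Lemma \ref{lem19} and the fact that $\rho\neq E_k^h$, we already have $\rho\varsubsetneq\zeta$, and by hypothesis $\zeta\varsubsetneq E_k^h$, so the relation $\zeta$ is a candidate.

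First I would verify the containment $\pol(\theta)\cap\pol(\rho)\subseteq\pol(\zeta)$ by a direct quantifier-chasing argument. Given an $n$-ary $f\in\pol(\theta)\cap\pol(\rho)$ and columns $\mathbf{c}_j=(c_{1j},\ldots,c_{hj})\in\zeta$ for $j=1,\ldots,n$, choose witnesses $(w_{1j},\ldots,w_{hj})$ for each $\mathbf{c}_j$ with $w_{ij}\in[c_{ij}]_\theta$ and $(c_{ij},w_{i_1 j},\ldots,w_{i_{h-1} j})\in\rho$ for every $i$. Setting $b_i=f(c_{i1},\ldots,c_{in})$ and $v_i=f(w_{i1},\ldots,w_{in})$, preservation of $\theta$ gives $v_i\in[b_i]_\theta$ and preservation of $\rho$ gives $(b_i,v_{i_1},\ldots,v_{i_{h-1}})\in\rho$ for each $i$; hence $(v_1,\ldots,v_h)$ witnesses $(b_1,\ldots,b_h)\in\zeta$.

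Next, for the right strict inclusion $\pol(\theta)\cap\pol(\zeta)\varsubsetneq\pol(\theta)$, I pick $(w_1,\ldots,w_h)\in E_k^h\setminus\zeta$ and $(a,b)\notin\theta$, and define the $h$-ary operation $g$ by (\ref{function-for-strict-inclusion-f}) with the $w_i$'s replacing the $u_i$'s. Since $g$ depends only on $\theta$-classes, $g\in\pol(\theta)$. Feeding $g$ the columns $(a,\ldots,a),\,(a,b,a,\ldots,a),\,\ldots,\,(a,\ldots,a,b)$, each of which lies in $\eta\subseteq\rho\subseteq\zeta$ because (for $h\ge 3$) each has two coordinates equal to $a$, the row-wise outputs produce $(w_1,\ldots,w_h)\notin\zeta$, so $g\notin\pol(\zeta)$. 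For the left strict inclusion, I pick $(u_1,\ldots,u_h)\in\zeta\setminus\rho$ and define $f$ by (\ref{function-for-strict-inclusion-f}) exactly as in Lemma~\ref{major-criterion}. The same matrix of standard columns, all of which lie in $\rho$ by total reflexivity, maps row-wise under $f$ to $(u_1,\ldots,u_h)\notin\rho$, so $f\notin\pol(\rho)$.

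The main obstacle is to check $f\in\pol(\zeta)$. Here is the argument: for any $h$ columns $\mathbf{c}_1,\ldots,\mathbf{c}_h\in\zeta$ forming an $h\times h$ matrix, the row-wise output $(f(\mathbf{r}_1),\ldots,f(\mathbf{r}_h))$ is a tuple $(u_{k_1},\ldots,u_{k_h})$ with each $k_i\in\{1,\ldots,h\}$, since $f$ takes values only in $\{u_1,\ldots,u_h\}$. If two indices $k_i$ coincide, the output has two equal (hence $\theta$-related) coordinates, so it lies in $\eta\subseteq\rho\subseteq\zeta$. Otherwise $(k_1,\ldots,k_h)$ is a permutation $\pi$ of $(1,\ldots,h)$, and the output $(u_{\pi(1)},\ldots,u_{\pi(h)})$ lies in $\zeta$ because $\zeta$ inherits total symmetry from $\rho$: if $(u_1,\ldots,u_h)\in\zeta$ with witnesses $(v_1,\ldots,v_h)$, then $(v_{\pi(1)},\ldots,v_{\pi(h)})$ witnesses the permuted tuple, since total symmetry of $\rho$ allows each required membership condition to be permuted accordingly. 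Combining these three parts concludes the proof.
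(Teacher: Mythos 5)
Your proposal is correct and follows essentially the same route as the paper: it exhibits $\pol(\theta)\cap\pol(\zeta)$ as a clone lying strictly between $\pol(\theta)\cap\pol(\rho)$ and $\pol(\theta)$, using the same test operation (\ref{function-for-strict-inclusion-f}), and your case analysis for $f\in\pol(\zeta)$ (repeated output index versus a permutation of $(u_1,\ldots,u_h)$, handled via total symmetry of $\zeta$) actually spells out a step the paper only gestures at by reference to Lemma \ref{lem-rec-I}. One small correction: a tuple with two equal coordinates in arbitrary positions lies in $\tau_h^{E_k}\subseteq\rho$ by total reflexivity, not in $\eta$, which requires the \emph{first two} coordinates to be $\theta$-related (e.g.\ the column $(a,b,a,\ldots,a)$ is not in $\eta$ since $(a,b)\notin\theta$); the conclusion that such tuples lie in $\rho\subseteq\zeta$ is unaffected.
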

\begin{proof}We have to prove that $\pol(\rho)\cap \pol(\theta)\varsubsetneq \pol(\theta)\cap \pol(\,\zeta)\varsubsetneq \pol(\theta).$
Let $f\in \mathcal{O}^{n}(E_k)$ be an $n$-ary operation on $E_k$ such that $f\in \pol(\theta)\cap \pol(\rho)$. Let $$\left(a_{11}, a_{21}, \ldots,
a_{h1}\right), \ldots ,\left(a_{1n}, a_{2n}, \ldots, a_{hn} \right)\in \,\zeta.$$ From the definition of $\,\zeta$,  for all $i\in\{1;\cdots;h\}$, for all
$j\in\{1, \ldots ,n\}$, there exist $u_{ij}\in [a_{ij}]_{\theta}$ such that
$$\left(a_{1j}, u_{2j}, \ldots, u_{hj}\right),\left(u_{1j}, a_{2j}, \ldots, u_{hj}\right), \ldots ,\left(u_{1j}, u_{2j}, \ldots, u_{(h-1)j}, a_{hj} \right)\in\rho;$$ and as $f\in
\pol(\rho)$ we have $$\left(f(a_{11}, \ldots ,a_{1n}), f(u_{21}, \ldots ,u_{2n}), \ldots,  f(u_{h1}, \ldots ,u_{hn}) \right)\in \rho,$$ $$\left(f(u_{11},
\ldots ,u_{1n}), f(a_{21}, \ldots ,a_{2n}), f(u_{31}, \ldots ,u_{3n}), \ldots,  f(u_{h1}, \ldots ,u_{hn})\right)\in \rho, \ldots,$$ $$\left(f(u_{11},
\ldots ,u_{1n}),  \ldots, f(u_{(s-1)1}, \ldots ,u_{(h-1)n}), f(a_{h1}, \ldots ,a_{hn}) \right)\in \rho.$$ Moreover,  as $f\in \pol(\theta)$, we have
$f(u_{i1}, \ldots ,u_{in})\theta f(a_{i1}, \ldots ,a_{in})$; therefore,  using the definition of $\,\zeta$,
$$\left(f(a_{11}, \ldots ,a_{1n}), f(a_{21}, \ldots ,a_{2n}), \ldots, f(a_{h1}, \ldots ,a_{hn})\right)\in \,\zeta.$$ Then $\pol(\rho)\cap \pol(\theta)\subseteq
\pol(\theta)\cap \pol(\,\zeta)$. \\
As  $\rho\varsubsetneq \,\zeta$ we have  $ \,\zeta\setminus \rho$ is not empty. Given $\left( u_1, u_2, \ldots, u_h\right)\in \,\zeta\setminus \rho$ and
$(a,b)\notin \theta$, using the $h$-ary operation $f$ defined by (\ref{function-for-strict-inclusion-f}), and the same argument used in the proof of Lemma
\ref{lem-rec-I}, we obtain $f\in \pol(\,\zeta)$ and $f\notin \pol(\rho)$. Therefore, $\pol(\rho)\cap \pol(\theta)\varsubsetneq \pol(\theta)\cap
\pol(\,\zeta).$ It is obvious  that $\pol(\theta)\cap \pol(\,\zeta)\varsubsetneq \pol(\theta)$ (because $\,\zeta\neq E_{k}^{h}$). Given
  $\left(u_1, u_2, \ldots, u_h\right)\in E_{k}^{h}\setminus\, \zeta$ and $(a,b)\notin
\theta$. $f\in \pol(\theta)$ and $f\notin \pol(\,\zeta)$; thus $\pol(\rho)\cap \pol(\theta)\varsubsetneq \pol(\,\zeta)\cap \pol(\theta)\varsubsetneq
\pol(\theta)$.
\end{proof}


\begin{lemma}\label{lem-rec-I-gamma4}
If $\,\zeta=E_{k}^{h}$ and $\,\zeta^k\neq E_k^{k}$, then $\pol(\theta)\cap \pol(\rho)$ is not maximal in $\pol(\theta)$ where $\,\zeta^{l}$ is the $l$-ary
relation on $E_k$ specified by

\begin{eqnarray*}
  \,\zeta^{l} &=& \left\{\left(a_1, \ldots,  a_l\right)\in E_{k}^{l} / \exists
u_{i}\in [a_{i}]_{\theta}/ \forall i\in \{1;\cdots;l\},\right. \\
   && \hspace{1cm}\left.\{u_1; \ldots ;u_{i-1};a_i;u_{i+1}; \ldots ;u_l\}^{h}\subseteq\rho\right\}.
\end{eqnarray*}
\end{lemma}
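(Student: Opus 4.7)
The plan is to mimic the template of Lemmas \ref{lem-rec-II}, \ref{lem-rec-II-gamma3} and \ref{lem-rec-O-gamma4}: interpose the clone $\pol(\theta)\cap\pol(\,\zeta^{m})$ strictly between $\pol(\theta)\cap\pol(\rho)$ and $\pol(\theta)$, where
\[m=\min\{l\in\mathbb{N}:\ h<l\le k,\ \zeta^{l}\neq E_{k}^{l}\}.\]
By hypothesis $\zeta^{k}\neq E_{k}^{k}$, so this minimum exists and $h<m\le k$. The target chain is $\pol(\rho)\cap\pol(\theta)\varsubsetneq\pol(\,\zeta^{m})\cap\pol(\theta)\varsubsetneq\pol(\theta)$.

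For the first inclusion, given $f\in\pol(\theta)\cap\pol(\rho)$ of arity $n$ and columns $(a_{1j},\ldots,a_{mj})\in\,\zeta^{m}$ ($1\le j\le n$), the definition of $\,\zeta^{m}$ supplies witnesses $u_{ij}\in[a_{ij}]_{\theta}$ with $\{u_{1j},\ldots,u_{(i-1)j},a_{ij},u_{(i+1)j},\ldots,u_{mj}\}^{h}\subseteq\rho$ for every $i$. Apply $f$ column-wise: because $f\in\pol(\rho)$, each required $h$-fold Cartesian product of the images still lies in $\rho$; because $f\in\pol(\theta)$, the images $f(u_{i1},\ldots,u_{in})$ belong to $[f(a_{i1},\ldots,a_{in})]_{\theta}$. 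These furnish the witnesses certifying $(f(a_{1\cdot}),\ldots,f(a_{m\cdot}))\in\,\zeta^{m}$.

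For strictness of the first inclusion, pick $(u_{1},\ldots,u_{h})\in E_{k}^{h}\setminus\rho$ and $(a,b)\notin\theta$, and take the $h$-ary operation $f$ defined by (\ref{function-for-strict-inclusion-f}). Clearly $f\notin\pol(\rho)$ since $f$ maps the tuples $(a,\ldots,a),(a,b,a,\ldots,a),\ldots,(a,\ldots,a,b)\in\rho$ to $(u_{1},\ldots,u_{h})\notin\rho$. To check $f\in\pol(\,\zeta^{m})$, it suffices to observe that $\{u_{1},\ldots,u_{h}\}^{m}\subseteq\,\zeta^{m}$: since $(u_{1},\ldots,u_{h})\in E_{k}^{h}=\,\zeta$, for every $i$ one has witnesses $w_{ji}\in[u_{j}]_{\theta}$ with $(u_{i},w_{1i},\ldots,\widehat{w_{ii}},\ldots,w_{hi})\in\rho$; by total symmetry/reflexivity of $\rho$ and $\eta\subseteq\rho$, any $m$-tuple from $\{u_{1},\ldots,u_{h}\}$ admits the witness structure required by $\,\zeta^{m}$. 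The range of $f$ on any inputs is contained in $\{u_{1},\ldots,u_{h}\}$, so the image of any $n$ columns from $\zeta^{m}$ lies in $\{u_{1},\ldots,u_{h}\}^{m}\subseteq\,\zeta^{m}$.

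For the second strict inclusion, fix $(u_{1},\ldots,u_{m})\in E_{k}^{m}\setminus\,\zeta^{m}$ (nonempty by the choice of $m$) and $(a,b)\notin\theta$, and use the $m$-ary operation $h$ of (\ref{function-for-strict-inclusion-h-in-theta}). The implication $a_{i}\theta a_{j}\Rightarrow i=j$ (for the reference tuples $a_{1},\ldots,a_{m}$) gives $h\in\pol(\theta)$. On the other hand, by minimality of $m$ we have $\,\zeta^{m-1}=E_{k}^{m-1}$, and in particular every $a_{r}$ lies in $\,\zeta^{m}$ (using total reflexivity together with $m>h$), so the columns $a_{1},\ldots,a_{m}$ are all in $\,\zeta^{m}$ while $(h(a_{1}),\ldots,h(a_{m}))=(u_{1},\ldots,u_{m})\notin\,\zeta^{m}$, whence $h\notin\pol(\,\zeta^{m})$. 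Combining the three displays yields the desired proper containments.

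The main bookkeeping obstacle is Step 2: verifying cleanly that $\{u_{1},\ldots,u_{h}\}^{m}\subseteq\,\zeta^{m}$ from the sole assumption $\zeta=E_{k}^{h}$, since the witnesses supplied by $\zeta$ depend on the distinguished index $i$, whereas $\,\zeta^{m}$ asks for a family of witnesses $u_{1},\ldots,u_{m}$ fitting \emph{every} position simultaneously; this is where total symmetry of $\rho$ together with $\eta\subseteq\rho$ must be invoked to glue the per-$i$ witnesses into a single coherent choice.
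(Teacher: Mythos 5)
Your proof is correct and follows essentially the same route as the paper's: interpose $\pol(\theta)\cap\pol(\,\zeta^{l})$ between the two clones, use the $h$-ary operation of (\ref{function-for-strict-inclusion-f}) together with the claim $\{u_1,\ldots,u_h\}^{l}\subseteq\,\zeta^{l}$ for the first strict inclusion, and the $l$-ary operation of type (\ref{function-for-strict-inclusion-h-in-theta}) for the second. The only (immaterial) difference is that you take $l=m$ minimal with $\zeta^{m}\neq E_k^{m}$ whereas the paper works directly with $l=k$; the key gluing step for the witnesses is handled (and glossed over) in the same way in both arguments.
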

\begin{proof} We will prove that
$\pol(\rho)\cap \pol(\theta)\varsubsetneq \pol(\,\zeta^k)\varsubsetneq \pol(\theta)$.\\
It is easy to see that $\pol(\rho)\cap \pol(\theta)\subseteq \pol(\,\zeta^k)$.\\
Given $\left( u_1, u_2, \ldots, u_h\right)\in E_{k}^{h}\setminus
\rho$ and $(a,b)\notin \theta$, using the $h$-ary operation $f$
defined by (\ref{function-for-strict-inclusion-f}), we have again
$f\notin  \pol(\rho)$ and $f\in \pol(\,\zeta^{k})$; indeed, since
$\left( u_1, u_2, \ldots, u_h\right)\in E_{k}^{h}=\,\zeta$, there
exist $  v_i \in [u_i]_{\theta}, i\in\{1;\cdots;h\}$ such that for
all $ i\in\{1;\cdots;h\}$, $\{u_i,v_{i_1}, \ldots
,v_{i_{h-1}}\}^{h}\subseteq \rho $, with $ \{i_1,i_2, \ldots
,i_{h-1}\}=\{1;\cdots;h\}\setminus \{i\} $.
 and it  follows that $\{u_1,u_2, \ldots ,u_h\}^{k}\subseteq \,\zeta^k$.\\
Moreover, as we have $\zeta^{k}\neq E_{k}^{k}$ then $E_{k}^{k}\setminus   \,\zeta^{k}\neq \emptyset$. Given $\left(w_{1}, \ldots, w_{k}\right)\in
E_k^{k}\setminus \, \zeta^{k} $,  $(a,b)\notin \theta$, using the $k$-ary operation $h$ defined on $E_k$, by:
$$ h(x_1, \ldots ,x_k)=  \left\{\begin{array}{ll}
                                                 w_1 &\text{ if } (x_1, \ldots ,x_k)\theta (a, \ldots ,a,a)=a_1 \\
                                                 w_2 &\text{ if } (x_1, \ldots ,x_k)\theta (a,b,a, \ldots ,,a)=a_2 \\
                                                 w_3 &\text{ if } (x_1, \ldots ,x_k)\theta (a,a,b,a \ldots ,a)=a_3 \\
                                                 &\vdots\\
                                                 w_{k} &\text{ if } (x_1, \ldots ,x_k)\theta (a,a,a, \ldots a,b)=a_k\\
                                                 w_k &\text{ otherwise; }
                                               \end{array}
      \right.
$$
 and the same argument used for the similar operation $h$ in the proof of Lemma \ref{lem-rec-II} yields  $h\in \pol(\theta)$ and $h\notin
\pol(\zeta^{k})$.

Thus $\pol(\rho)\cap \pol(\theta)\varsubsetneq \pol(\,\zeta^k)\cap \pol(\theta)\varsubsetneq \pol(\theta)$.
\end{proof}

Now, we continue our induction process with the assumption  $\,
\varsigma^{l}= E_k^{l}$ for all $h\leq l\leq t$ and $\,\zeta^{k}=
E_k^{k}$. Since
 $\left(0, \ldots,  k-1 \right)\in E_{k}^{k}$, there exist $ u_{i}\in
[i]_{\theta}$ such that $$\{u_0;u_1; \ldots ;u_{i-1};i;u_{i+1};
\ldots ;u_{k-1}\}^{h}\subseteq\rho$$ for all $i\in E_{k}$. We set
$$T_i:= \min (C_i)\cap\{u_0;u_1; \ldots ;u_{k-1}\}$$ for all $i\in E_{k}$. Therefore
$\{T_{0};\cdots; T_{t-1}\}$ is a transversal of order $1$.

Before the main induction, we define the sequence $(^{h}\beta_{n})$
by:  $ ^{h}\beta_0=\, \eta $, $^{h}\beta_1=\, \rho_{0,\theta} $, $
^{h}\beta_2=\, \varsigma $, and for $l\geq 3$,

$$^{h}\beta_{l}=\underset{\sigma\in\mathcal{S}_{h}}\bigcap
(\rho_{l-1,\theta})_{\sigma}.$$

Let $n\in\{1;\cdots;h-1\}$ and assume that there exists a
transversal $T$ of order $ n-1$ for the $\theta$-classes. Set
$T=\{u_0;u_1;\cdots;u_{t-1}\}$; then for every
$a_1,a_2,...a_{n-1}\in E_{k}$, $\{a_1;a_2;\cdots
a_{n-1};u_0;u_1;\cdots; u_{t-1}\}^{h}\subseteq \rho$ and
$\rho\subseteq\,^{h}\beta_{n+1}$.
\begin{lemma}\label{lem-rec-I-gamma5-egal}
If $\rho=\, ^{h}\beta_{n+1}\varsubsetneq E_{k}^{h}$, then
$\pol(\theta)\cap \pol(\rho)$ is maximal in $\pol(\theta).$
\end{lemma}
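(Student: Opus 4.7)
The plan is to observe that, under the standing assumptions built up in this subsection, the hypothesis $\rho = {}^{h}\beta_{n+1}$ is just a restatement of condition III of Theorem \ref{maintheorem}, and hence the conclusion follows directly from the already proved Proposition \ref{sufficiency-direction}. So the whole argument is essentially a bookkeeping step that translates the recursive notation ${}^{h}\beta_{n+1}$ back into the language of Definition \ref{def-theta-fermeture}.

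First I would unpack the recursive definition introduced just above the lemma: since $n\geq 2$ (the cases $n=0,1$ correspond to $\eta$ and $\rho_{0,\theta}$, already handled in Lemmas \ref{lem-rec-O} through \ref{lem-rec-II-gamma3}), we have
\[
{}^{h}\beta_{n+1}=\bigcap_{\sigma\in\mathcal{S}_h}(\rho_{n,\theta})_\sigma,
\]
so the hypothesis $\rho={}^{h}\beta_{n+1}$ gives the intersection equation required by Definition \ref{def-theta-fermeture}(2) for $\rho$ to be weakly $\theta$-closed of order $n$.

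Next I would invoke the transversal data set up in the paragraph immediately preceding the statement. The induction hypothesis there explicitly fixes a transversal $T=\{u_0,\dots,u_{t-1}\}$ of order $n-1$ for the $\theta$-classes, which is precisely the other ingredient required by Definition \ref{def-theta-fermeture}(2). Together with the equation above, this shows that $\rho$ is weakly $\theta$-closed of order $n$. Combining this with the inclusion $\eta\subseteq\rho$, which is a standing assumption from the remark after Lemma \ref{major-criterion}, we conclude that $\rho$ satisfies condition III of Theorem \ref{maintheorem}, i.e.\ $\rho$ is of type III with $l=n$. The requirement $\rho\varsubsetneq E_k^h$ in the hypothesis rules out the trivial degeneracy and keeps us inside the setting of a genuine central relation.

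Finally I would apply Proposition \ref{sufficiency-direction}, which establishes the sufficiency direction of Theorem \ref{maintheorem} in all three cases I--III; in particular it covers type III. This immediately yields that $\pol(\theta)\cap\pol(\rho)$ is maximal in $\pol(\theta)$. There is no genuine obstacle here because the real content, namely the production of an $h$-near-unanimity operation in $\pol(\theta)\cap\pol(\rho)$ (Lemma \ref{existence-unanimity-all}) and the classification of the $h$-ary relations preserved by this clone (Lemma \ref{caracterisation-diagonal-all}), has already been carried out when proving Proposition \ref{sufficiency-direction}; only the identification of ${}^{h}\beta_{n+1}$ with a weakly $\theta$-closed relation of order $n$ needs to be made explicit.
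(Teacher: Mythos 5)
Your proposal is correct and follows essentially the same route as the paper: the paper's own proof is exactly the observation that $\rho={}^{h}\beta_{n+1}$ together with the transversal of order $n-1$ and the standing assumption $\eta\subseteq\rho$ makes $\rho$ weakly $\theta$-closed of order $n$, hence of type III, so Proposition \ref{sufficiency-direction} applies. Your version merely spells out the bookkeeping that the paper leaves implicit (a minor indexing slip in your parenthetical about which lemmas handle the small cases does not affect the argument, since ${}^{h}\beta_{n+1}=\bigcap_{\sigma}(\rho_{n,\theta})_{\sigma}$ holds for all $n\geq 1$).
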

\begin{proof}
If $\rho= \, ^{h}\beta_{n+1}\varsubsetneq E_{k}^{h}$, then $\rho$ is weakly $\theta$-closed of order $n$. Hence $\rho$ is a relation of type III. Using
Proposition \ref{sufficiency-direction} $\pol(\theta)\cap \pol(\rho)$ is maximal in $\pol(\theta)$.
\end{proof}
\begin{lemma}\label{lem-rec-I-gamma5}
If $\rho\varsubsetneq\, ^{h}\beta_{n+1}\varsubsetneq E_{k}^{h}$,
then $\pol(\theta)\cap \pol(\rho)$ is not maximal in $\pol(\theta).$
\end{lemma}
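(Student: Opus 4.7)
The plan is to interpose $\pol(\theta)\cap\pol(\,^{h}\beta_{n+1})$ strictly between $\pol(\theta)\cap\pol(\rho)$ and $\pol(\theta)$, following exactly the template of Lemma~\ref{lem-rec-I-gamma3} (which handled the case $n=1$, $\,^{h}\beta_{2}=\varsigma$).

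First I would establish the chain of inclusions
$\pol(\rho)\cap\pol(\theta)\subseteq \pol(\,^{h}\beta_{n+1})\cap\pol(\theta)\subseteq \pol(\theta)$. The right-hand inclusion is automatic. For the left-hand inclusion, since $\,^{h}\beta_{n+1}=\bigcap_{\sigma\in\mathcal{S}_{h}}(\rho_{n,\theta})_{\sigma}$, it suffices to check that every $f\in\pol(\rho)\cap\pol(\theta)$ preserves $(\rho_{n,\theta})_{\sigma}$ for each $\sigma$. This follows by the same two-step argument as in Lemma~\ref{lem-rec-I-gamma3}: given columns in $\rho_{n,\theta}$, unfold the definition to get witnesses $u_{ji}\in[a_{ji}]_{\theta}$ with $(a_{1i},\ldots,a_{ni},u_{n+1,i},\ldots,u_{h,i})\in\rho$; apply $f$ using $f\in\pol(\rho)$ to get the required $\rho$-tuple, then replace the last $h-n$ coordinates back via $f\in\pol(\theta)$. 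The permuted versions follow by symmetry.

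Next I would show the strict inclusion $\pol(\rho)\cap\pol(\theta)\varsubsetneq \pol(\,^{h}\beta_{n+1})\cap\pol(\theta)$. Since $\rho\varsubsetneq\,^{h}\beta_{n+1}$, pick $(u_1,\ldots,u_h)\in \,^{h}\beta_{n+1}\setminus\rho$ and $(a,b)\notin\theta$, and reuse the operation $f$ defined by (\ref{function-for-strict-inclusion-f}). Then $f\in\pol(\theta)$ trivially and $f\notin\pol(\rho)$ by the same totally-reflexive matrix witness used in Lemma~\ref{major-criterion}. That $f\in\pol(\,^{h}\beta_{n+1})$ is verified as in Lemma~\ref{lem-rec-I-gamma3}: on any $h$ input tuples, either two rows lie in the same $\theta$-class (and then the output rows become equal, hence the tuple lies in $\,^{h}\beta_{n+1}$ by total reflexivity, since $\tau_{h}^{E_{k}}\subseteq\,^{h}\beta_{n+1}$), or the output is exactly $(u_1,\ldots,u_h)\in\,^{h}\beta_{n+1}$. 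For the final strict inclusion $\pol(\,^{h}\beta_{n+1})\cap\pol(\theta)\varsubsetneq\pol(\theta)$, since $\,^{h}\beta_{n+1}\varsubsetneq E_{k}^{h}$ we can choose $(u_1,\ldots,u_h)\in E_{k}^{h}\setminus\,^{h}\beta_{n+1}$ and form the analogous $f$; then $f\in\pol(\theta)$ while the same totally-reflexive matrix witnesses $f\notin\pol(\,^{h}\beta_{n+1})$.

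The main obstacle is the verification that the $f$ from (\ref{function-for-strict-inclusion-f}) preserves $\,^{h}\beta_{n+1}$ in the middle step: one has to argue carefully that whenever the $h$ input tuples are not in ``general position'' with respect to $\theta$, the output has a coincidence that forces membership via total reflexivity, while in the generic case the output is precisely the chosen element of $\,^{h}\beta_{n+1}$. The rest is bookkeeping that copies Lemmas~\ref{lem-rec-I}, \ref{lem-rec-I-gamma3} and \ref{lem-rec-O-gamma4} essentially verbatim with $\varsigma$ (resp.\ $\zeta$) replaced by $\,^{h}\beta_{n+1}$.
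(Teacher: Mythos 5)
Your proposal is correct and follows the paper's own (very terse) proof exactly: interpose $\pol(\theta)\cap \pol(\,^{h}\beta_{n+1})$ between $\pol(\rho)\cap\pol(\theta)$ and $\pol(\theta)$, and use the operation $f$ of (\ref{function-for-strict-inclusion-f}) twice, once with a tuple from $\,^{h}\beta_{n+1}\setminus\rho$ and once with a tuple from $E_{k}^{h}\setminus\,^{h}\beta_{n+1}$. The only nit is that in the ``general position'' case the output of $f$ is a permutation of $(u_1,\ldots,u_h)$ rather than exactly that tuple, which is harmless since $\,^{h}\beta_{n+1}=\underset{\sigma\in\mathcal{S}_{h}}{\bigcap}(\rho_{n,\theta})_{\sigma}$ is totally symmetric.
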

\begin{proof}We have to prove that
$\pol(\rho)\cap \pol(\theta)\varsubsetneq \pol(\theta)\cap \pol(\, ^{h}\beta_{n+1})\varsubsetneq \pol(\theta)$.\\
It is easy to show that $\pol(\rho)\cap \pol(\theta)\subseteq \pol(\theta)\cap \pol(\, ^{h}\beta_{n+1})$.\\
Using the $h$-ary operation $f$ defined by (\ref{function-for-strict-inclusion-f}), we show that $\pol(\rho)\cap \pol(\theta)\varsubsetneq \pol(\,
^{h}\beta_{n+1})\cap \pol(\theta)\varsubsetneq \pol(\theta)$.
\end{proof}
\begin{lemma}\label{lem-rec-II-gamma5}\text{ }
If $\rho\varsubsetneq\, ^{h}\beta_{n+1}= E_{k}^{h}$  and  $\, ^{h}\beta_{n+1}^{l}\neq E_k^{l}$ for some $l>h$, then $\pol(\theta)\cap \pol(\rho)$
is not maximal in $\pol(\theta)$;\\
where $ ^{h}\beta_{n+1}^{l}=\underset{\sigma\in\mathcal{S}_{l}}\bigcap(\rho_{n,\theta}^{l})_{\sigma}$ with
\begin{eqnarray*}
  \, \rho_{n,\theta}^{l} &:=& \left\{\left(a_1,
\ldots,  a_l \right)\in E_{k}^{l} / \exists u_{i}\in [a_{i}]_{\theta}, n+1\leq i\leq l: \right. \\
   && \,\,\,\,\,\,\,\,\,\,\,\,\,\,\,\,\left.\{a_1;a_2;\cdots;a_n;u_{n+1},\ldots,u_{l}\}^{h}\subseteq \rho\right\}.
\end{eqnarray*}
\end{lemma}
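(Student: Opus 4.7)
The plan is to imitate the proof of Lemma \ref{lem-rec-II}, inserting an intermediate clone of the form $\pol(\theta) \cap \pol(\, ^{h}\beta_{n+1}^{m})$ between $\pol(\theta) \cap \pol(\rho)$ and $\pol(\theta)$, where
\[
m := \min\{l \in \mathbb{N} \setminus \{0,1,\ldots,h\} \,:\, ^{h}\beta_{n+1}^{l} \neq E_{k}^{l}\}.
\]
By hypothesis this minimum exists, and by minimality $\, ^{h}\beta_{n+1}^{l} = E_{k}^{l}$ for every $h \le l < m$. I would then establish the chain
\[
\pol(\rho) \cap \pol(\theta) \varsubsetneq \pol(\, ^{h}\beta_{n+1}^{m}) \cap \pol(\theta) \varsubsetneq \pol(\theta),
\]
which is enough to conclude non-maximality.

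For the inclusion $\pol(\rho) \cap \pol(\theta) \subseteq \pol(\, ^{h}\beta_{n+1}^{m})$, I would take an $n$-ary $f \in \pol(\rho) \cap \pol(\theta)$ and $m$-tuples $(a_{1j},\ldots,a_{mj}) \in \, ^{h}\beta_{n+1}^{m}$ for $1 \le j \le n$. Using the fact that $\, ^{h}\beta_{n+1}^{m} = \bigcap_{\sigma} (\rho_{n,\theta}^{m})_{\sigma}$, for each $\sigma \in \mathcal{S}_{m}$ and each $j$ I would extract witnesses $u_{ij}^{\sigma} \in [a_{ij}]_{\theta}$ (for $i$ outside the first $n$ coordinates of $\sigma$) such that every length-$h$ sub-tuple of the prescribed form belongs to $\rho$. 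Since $f \in \pol(\rho)$, the outputs $f(u_{i1}^{\sigma},\ldots,u_{in}^{\sigma})$ generate the required $\rho$-witnesses for the transformed image, and since $f \in \pol(\theta)$, these witnesses are $\theta$-equivalent to $f(a_{i1},\ldots,a_{in})$. Assembling this over all $\sigma$ shows that $f$ preserves $\, ^{h}\beta_{n+1}^{m}$.

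For the strict inclusion $\pol(\rho) \cap \pol(\theta) \varsubsetneq \pol(\, ^{h}\beta_{n+1}^{m}) \cap \pol(\theta)$, I would pick $(u_1,\ldots,u_h) \in E_k^h \setminus \rho$ (possible since $\rho \varsubsetneq E_k^h$) and $(a,b) \notin \theta$, and consider the $h$-ary operation $f$ defined by \eqref{function-for-strict-inclusion-f}. This operation preserves $\theta$ by construction, and it fails to preserve $\rho$ via the same matrix of columns $(a,a,\ldots,a), (a,b,a,\ldots,a),\ldots,(a,\ldots,a,b)$ used in Lemma \ref{lem-rec-II}. To see it preserves $\, ^{h}\beta_{n+1}^{m}$, I would rely on total reflexivity of $\, ^{h}\beta_{n+1}^{m}$ together with the fact that $m > h$: applied coordinatewise to any input matrix in $(\, ^{h}\beta_{n+1}^{m})^h$, the values of $f$ are drawn from $\{u_1,\ldots,u_h\}$, so the resulting $m$-tuple has at most $h$ distinct entries; total reflexivity then places it in $\, ^{h}\beta_{n+1}^{m}$, here one also uses $\, ^{h}\beta_{n+1}^{l}=E_k^l$ for $h\le l<m$ to handle the near-reflexive cases.

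Finally, for $\pol(\, ^{h}\beta_{n+1}^{m}) \cap \pol(\theta) \varsubsetneq \pol(\theta)$, I would choose $(w_1,\ldots,w_m) \in E_k^m \setminus \, ^{h}\beta_{n+1}^{m}$ and use the $m$-ary operation $h$ defined exactly as in \eqref{function-for-strict-inclusion-h-in-theta}. The argument used in Lemma \ref{lem-rec-II} shows $h \in \pol(\theta)$ but $h \notin \pol(\, ^{h}\beta_{n+1}^{m})$, since $h$ sends the canonical $m$ columns (all in $\, ^{h}\beta_{n+1}^{m}$ by total reflexivity of that relation, where necessary appealing to $\, ^{h}\beta_{n+1}^{l}=E_k^l$ for $l<m$) to the forbidden tuple $(w_1,\ldots,w_m)$.

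The main obstacle I expect is the careful verification that the $h$-ary operation $f$ preserves the relation $\, ^{h}\beta_{n+1}^{m}$, since $\, ^{h}\beta_{n+1}^{m}$ is not a simple totally symmetric relation but an intersection indexed by $\mathcal{S}_{h}$ acting on coordinates. Unlike the simpler setting of Lemma \ref{lem-rec-II}, one cannot directly argue via a single witness tuple; instead one must systematically exploit the minimality of $m$ to guarantee that the image always has enough coordinatewise repetition to fall within all of the defining intersected relations. Once this is handled, the remaining verifications are essentially repetitions of the template already established in Lemmas \ref{lem-rec-II}, \ref{lem-rec-II-gamma3}, \ref{lem-rec-II-gamma3'}, and \ref{lem-rec-I-gamma4}.
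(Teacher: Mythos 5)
Your proposal is correct and follows essentially the same route as the paper: the same intermediate clone $\pol(\theta)\cap\pol({}^{h}\beta_{n+1}^{m})$ with $m$ minimal, the same $h$-ary operation $f$ from (\ref{function-for-strict-inclusion-f}) for the first strict inclusion, and the same $m$-ary operation for the second. The only real divergence is your justification that $f$ preserves ${}^{h}\beta_{n+1}^{m}$: be aware that ${}^{h}\beta_{n+1}^{m}$ is not totally reflexive by definition (a repeated entry does not obviously yield the required witnesses), so your pigeonhole argument genuinely needs the minimality of $m$ --- a repetition lets you pass to an $(m-1)$-tuple lying in ${}^{h}\beta_{n+1}^{m-1}=E_{k}^{m-1}$ and lift its witnesses --- whereas the paper instead deduces $\{u_1,\ldots,u_h\}^{m}\subseteq{}^{h}\beta_{n+1}^{m}$ directly from ${}^{h}\beta_{n+1}=E_{k}^{h}$.
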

\begin{proof}
Denote $m:=\min\{l\in \mathbb{N}\setminus\{0;1;2; \ldots
;h\}/^{h}\beta_{n+1}^{l}\neq E_k^{l}\}$. Before proving that
$\pol(\rho)\cap \pol(\theta)\varsubsetneq \pol(\,
^{h}\beta_{n+1}^{m})\cap \pol(\theta)\varsubsetneq \pol(\theta)$, we
will first
prove that $\pol(\rho)\cap \pol(\theta)\subseteq \pol(\, ^{h}\beta_{n+1}^{m})\cap \pol(\theta)$.\\
Let $f\in \pol(\theta)\cap \pol(\rho)$  be an $p$-ary operation on $E_k$. Let
$$\left(a_{11}, \ldots,  a_{m1}\right), \ldots ,\left(a_{1p}, \ldots, a_{mp} \right)\in \, ^{h}\beta_{n+1}^{m}.$$
By definition of $\, ^{h}\beta_{n+1}^{m}$, for all $ c\in \{1;\cdots;p\}$, for all $\sigma\in\mathcal{S}_{m}$, there exist $ u_{\sigma(r)c}\in
[a_{\sigma(r)c}]_{\theta}, n+1 \leq r\leq m$  such that  $$ \{ a_{\sigma(1)c};\ldots; a_{\sigma(n)c}; u_{\sigma(n+1)c} ;\ldots;u_{\sigma(m)c}
\}^{h}\subseteq \rho,$$ and from the fact that $f\in \pol(\rho)$ it follows that for all $ \sigma\in\mathcal{S}_{m}$,
\[\begin{array}{ccr}
    \{f(a_{\sigma(1)1}, \ldots ,a_{\sigma(1)p}) ;\ldots; f(a_{\sigma(n)1}, \ldots ,a_{\sigma(n)p}); & &  \\
    \hspace{2cm}f(u_{\sigma(n+1)1}, \ldots ,u_{\sigma(n+1)p});\ldots; f(u_{\sigma(m)1}, \ldots,u_{\sigma(m)p})\}^{h}  &\subseteq  & \rho.
  \end{array}
\]
Since $f\in \pol(\theta)$, it follows that for all $ i\in \{n+1;\cdots;m\}$,  and for all $ d\in \{n+1;\cdots;m\}$,
$$\sigma(d)=i \Rightarrow f(u_{\sigma(d)1},u_{\sigma(d)2}, \ldots ,u_{\sigma(d)p})\theta
f(a_{i1}, \ldots ,a_{ip}).$$
  From the definition of $\, ^{h}\beta_{n+1}^{m}$, it follows that $$\left(f(a_{11}, \ldots ,a_{1p}), \ldots, f(a_{m1}, \ldots ,a_{mp})\right)\in \, ^{h}\beta_{n+1}^{m}.$$
Then $f\in \pol(\, ^{h}\beta_{n+1}^{m})\cap \pol(\theta)$.\\
Fix $\left(v_{1}, \ldots, v_{h}\right)\in E_{k}^{h}\setminus \rho$ and  let $(a,b)\notin \theta$. Using the $h$-ary operation $f$ defined by
(\ref{function-for-strict-inclusion-f})(where we replace $u_{i}$ by $v_{i}$, $1\leq i \leq h$), we have $f\notin \pol(\rho)$ and $f\in \pol(\,
^{h}\beta_{n+1}^{m})$. Indeed, due to the fact that
 $$\left(v_{1}, \ldots, v_{h}\right)\in E_{k}^{h}=\, ^{h}\beta_{n+1},$$
 for all $\sigma\in\mathcal{S}_{h}$ there exist $d_{r}\in [v_{r}]_{\theta}$, $n+1\leq r\leq h$ such that,
 $$ \{ v_{\sigma(1)};\ldots; a_{\sigma(n)};  d_{\sigma(n+1)};\ldots;d_{\sigma(h)} \}^{h}\subseteq \rho.$$
Therefore,  it follows that $\{v_1;v_2; \ldots ;v_h\}^{m}\subseteq \, ^{h}\beta_{n+1}^m$. \\
Moreover, as we have $\, ^{h}\beta_{n+1}^{m}\neq E_{k}^{m}$, then $ E_k^{m}\setminus \, ^{h}\beta_{n+1}^{m}$ is not an empty relation. Given $\left(v_{1},
\ldots, v_{m}\right)\in E_k^{m}\setminus \, ^{h}\beta_{n+1}^{m}$ and  $(a,b)\notin \theta$; using the $m$-ary operation $h$ defined by
(\ref{function-for-strict-inclusion-h-in-theta}) (where we replace $u_{i}$ by $v_{i}$, $1\leq i \leq m$), and the same argument used in the proof of Lemma
\ref{lem-rec-II}, it follows that $h\in \pol(\theta)$ and $h\notin \pol(\, ^{h}\beta_{n+1}^{m})$. Thus $\pol(\rho)\cap \pol(\theta)\varsubsetneq \pol(\,
^{h}\beta_{n+1}^{m})\cap \pol(\theta)\varsubsetneq \pol(\theta)$.
\end{proof}

It is naturally now to suppose that for each $l\in\{h;\cdots; t\}$,
$^{h}\beta_{n+1}^{l}= E_k^l$.

Let $m_1,...,m_n$ be integers such that $m_{1}>m_{2}>\dots>m_{n}$
and for each $i\in\{0;\cdots; t-1\}$, $|C_i|\leq
\min\{m_1,...,m_n\}$ or $|C_i|\in \{m_1,...,m_n\}$. We set
$m=m_1+...+m_n$. In this part we will use these notations
$$\mathbf{a}:=\left(a_1, \ldots, a_{m_1},a_{m_{1}+1}, \ldots,
a_{m_1+m_2},\ldots, a_{m_1+...+m_n},a_{m+1}, \ldots,  a_{m+t-n}
\right),$$
$$\Pi=\{1;\cdots;m_1\}\times\{m_1+1;\cdots;m_1+m_2\}\times...\times\{m_1+\cdots+m_{n-1}+1;\cdots;m\},$$ and
$$\Pi'=\{1;\cdots;m_1\}^{2}\cup\{m_1+1;\cdots;m_1+m_2\}^{2}\cup...\cup\{m_1+\cdots+m_{n-1}+1;\cdots;m\}^{2}.$$

Let $\, ^{h}\beta_{n+1}^{'}$ be the relation
\begin{eqnarray*}
  \, ^{h}\beta_{n+1}^{'}&=&\left\{\mathbf{a}\in E_{k}^{m+t-n} / \forall (i,j)\in \Pi', a_i\theta a_j;\right. \\
   &&\exists u_{i}\in [a_{i}]_{\theta},  m+1\leq i \leq m+t-n,\text{ such that } \\
&& \left.   \forall(i_1,...,i_n)\in\Pi, \{ a_{i_1}, \ldots, a_{i_{n}},u_{m+1}, \ldots ,u_{m+t-n}\}^{h}\subseteq \rho.\right\}
\end{eqnarray*}
 We have $\pol(\, ^{h}\beta_{n+1}^{'})\subseteq \pol(\theta)$.

We define two relations $\varrho_{1}$ and $\varrho_{2}$ on $\{1;\cdots;m+t-n\}$ by:
$$(i,j)\in\varrho_{1}\Leftrightarrow i=j \text { and } (i,j)\in \varrho_{2}\Leftrightarrow (i=j \text{ or } (i,j)\in \Pi').$$
$\varrho_{1}$ and $\varrho_{2}$ are  equivalence relations and $\, ^{h}\beta_{n+1}^{'} \subseteq D_{\varrho_{1}\varrho_{2}}$.

\begin{lemma}\label{lem-rec-II-beta-n'}\text{ }
If $\forall l\in\{h,h+1,...,t\}$, $\, ^{h}\beta_{n+1}^{l}=
E_{k}^{h}$ and $\, ^{h}\beta_{n+1}^{'}\neq
D_{\varrho_{1}\varrho_{2}}$, then $\pol(\theta)\cap \pol(\rho)$ is
not maximal in $\pol(\theta)$.
\end{lemma}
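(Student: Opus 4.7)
The plan is to mirror the strategy of Lemma \ref{lem-rec-II-gamma3'}, showing the two strict inclusions
$$\pol(\rho)\cap \pol(\theta)\varsubsetneq \pol(\, ^{h}\beta_{n+1}^{'})\cap\pol(\theta)\varsubsetneq \pol(\theta).$$
Since $\, ^{h}\beta_{n+1}^{'} \subseteq D_{\varrho_{1}\varrho_{2}}$ forces $\pol(\, ^{h}\beta_{n+1}^{'})\subseteq\pol(\theta)$ (because $D_{\varrho_{1}\varrho_{2}}$ is a diagonal relation through $\theta$, see Lemma \ref{caracterisation-I}), establishing these two strict inclusions immediately yields the non-maximality claim.

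First I would verify $\pol(\rho)\cap\pol(\theta)\subseteq\pol(\, ^{h}\beta_{n+1}^{'})$. Given $f\in\pol(\theta)\cap\pol(\rho)$ of arity $p$ and columns $(a_{1c},\ldots,a_{m+t-n,c})\in\, ^{h}\beta_{n+1}^{'}$ for $1\leq c\leq p$, the $\theta$-constancy conditions imposed by $\varrho_2$ on each column survive componentwise application of $f$ because $f\in\pol(\theta)$. Then, for each column $c$, one picks witnesses $u_{ic}\in[a_{ic}]_\theta$ ($m+1\leq i\leq m+t-n$) verifying the $\rho$-condition from the definition of $\, ^{h}\beta_{n+1}^{'}$; applying $f$ coordinatewise to the resulting $h$-tuples inside $\rho$ and invoking $f\in\pol(\theta)$ for the replacement $f(u_{i1},\ldots,u_{ip})\in[f(a_{i1},\ldots,a_{ip})]_\theta$ gives the required witnesses for $(f(a_{11},\ldots,a_{1p}),\ldots,f(a_{m+t-n,1},\ldots,a_{m+t-n,p}))$.

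Next I would show $\pol(\rho)\cap\pol(\theta)\varsubsetneq\pol(\, ^{h}\beta_{n+1}^{'})\cap\pol(\theta)$. Fix $(u_1,\ldots,u_h)\in E_k^h\setminus\rho$ and $(a,b)\notin\theta$, and take the $h$-ary operation $f$ of formula~(\ref{function-for-strict-inclusion-f}). As in Lemma \ref{lem-rec-II-gamma3'}, the standing hypothesis $\, ^{h}\beta_{n+1}^{l}=E_k^l$ (in particular for $l=m+t-n$ when $m+t-n\geq h$, otherwise one uses $\, ^{h}\beta_{n+1}=E_k^h$ together with total symmetry of $\rho$) supplies transversal witnesses certifying that $f$ preserves $\, ^{h}\beta_{n+1}^{'}$, whereas the evaluation of $f$ on the canonical matrix whose columns are $(a,a,\ldots,a)$, $(a,b,a,\ldots,a)$, \ldots yields $(u_1,\ldots,u_h)\notin\rho$, so $f\notin\pol(\rho)$.

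Finally I would establish $\pol(\, ^{h}\beta_{n+1}^{'})\cap\pol(\theta)\varsubsetneq\pol(\theta)$. By hypothesis $D_{\varrho_{1}\varrho_{2}}\setminus\, ^{h}\beta_{n+1}^{'}\neq\emptyset$, so pick $(v_1,\ldots,v_{m+t-n})$ in this difference. I construct an $(m+t-n)$-ary operation $h$ on $E_k$ whose value on an input $\theta$-equivalent to one of finitely many distinguished tuples $a_1,\ldots,a_{m+t-n}$ (each with exactly one coordinate equal to $b$ and the others to $a$, using $(a,b)\notin\theta$ so that the $a_i$ lie in distinct $\theta$-classes) returns the corresponding $v_i$; elsewhere $h$ outputs a fixed value. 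The condition $a_i\theta a_j\Rightarrow i=j$ guarantees $h\in\pol(\theta)$; moreover, by exhibiting explicit generating matrices lying in $\, ^{h}\beta_{n+1}^{'}$ (whose rows are built from the fixed transversal of order $n-1$ obtained prior to the lemma), evaluating $h$ returns the forbidden tuple $(v_1,\ldots,v_{m+t-n})$, so $h\notin\pol(\, ^{h}\beta_{n+1}^{'})$.

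The main obstacle is the third step: designing the generating matrix whose rows certify membership in $\, ^{h}\beta_{n+1}^{'}$ while being mapped by $h$ onto $(v_1,\ldots,v_{m+t-n})$. One must use the transversal $\{T_0,\ldots,T_{t-1}\}$ of order $n-1$ produced at the very start of the $n$-th inductive stage so that every block of $n$ coordinates among the first $m$ positions together with the chosen witnesses in the last $t-n$ positions lands in $\rho$; this requires careful indexing to respect the constancy dictated by $\varrho_{2}$ on the groups of sizes $m_1,\ldots,m_n$. Once this matrix is written down correctly, the remaining verifications parallel those already completed in Lemma \ref{lem-rec-II-gamma3'}.
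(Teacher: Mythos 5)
Your proposal follows exactly the route the paper intends: the paper's own proof of this lemma is literally ``It is similar to the proof of Lemma~\ref{lem-rec-II-gamma3'}'', and your three steps (the inclusion $\pol(\rho)\cap\pol(\theta)\subseteq\pol(\,^{h}\beta_{n+1}^{'})$, its strictness via the operation of formula~(\ref{function-for-strict-inclusion-f}), and the strictness of $\pol(\,^{h}\beta_{n+1}^{'})\cap\pol(\theta)\varsubsetneq\pol(\theta)$ via an $(m+t-n)$-ary operation separating a tuple of $D_{\varrho_{1}\varrho_{2}}\setminus\,^{h}\beta_{n+1}^{'}$) are precisely the adaptation of that argument. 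The ``main obstacle'' you flag in the last step is in fact resolved the same way as in Lemma~\ref{lem-rec-II-gamma3'}: the distinguished columns consist only of the two elements $a$ and $b$, so their membership in $\,^{h}\beta_{n+1}^{'}$ follows from total reflexivity of $\rho$ rather than from any delicate use of the transversal.
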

\begin{proof}
It is similar to the proof of Lemma \ref{lem-rec-II-gamma3'}.
\end{proof}

In what follows, we assume that
\begin{equation}\label{transversale-classe-ind}
\, ^{h}\beta_{n+1}^{'}= D_{\varrho_{1}\varrho_{2}}.\end{equation} i.e.,  for all $i_1,...,i_n\in\{0;\cdots; t-1\},$ there exist $u_{jI}\in C_j,
j\in\{0;\cdots; t-1\}\setminus \{i_1,...,i_n\}$, with $I=\{i_1,...,i_n\}$ such that for all $a_{i_{r}}\in C_{i_{r}}, r\in\{1;\cdots;n\}$ we have
$$\{a_{i_1};\cdots;a_{i_{n}},u_{1I}; \ldots ;u_{t-n \,I}\}^{h}\subseteq\rho.$$\\
We obtain a transversal of order $n$ for the $\theta$-classes. Hence
we can continue the previous induction until $$\,
^{h}\beta_{h}=\underset{\sigma\in\mathcal{S}_{h}}\bigcap(\rho_{h-1,\theta})_{\sigma}=
E_{k}^{h}.$$ From now, we suppose that $\, ^{h}\beta_{h}=
E_{k}^{h}$.

\begin{lemma}\label{lemma-reciproque-gamma5}
If $\, ^{h}\beta_{h}= E_{k}^{h}$ and there exists $l>h $ such that $\, ^{h}\beta_{h}^{l}\neq E_{k}^{h}$, then $\pol(\rho)\cap \pol(\theta)$ is not maximal
in $\pol(\theta)$.
\end{lemma}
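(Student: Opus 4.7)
The plan is to imitate, step for step, the proof of Lemma~\ref{lem-rec-II-gamma5}, which handled exactly the same kind of blowup obstruction one induction level earlier. Set $m := \min\{l \in \mathbb{N} : l > h \text{ and } \, ^{h}\beta_{h}^{l} \neq E_{k}^{l}\}$; this exists by hypothesis. I aim to establish the chain
$$\pol(\rho) \cap \pol(\theta) \varsubsetneq \pol(\, ^{h}\beta_{h}^{m}) \cap \pol(\theta) \varsubsetneq \pol(\theta),$$
which immediately yields the claim.

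For the first inclusion $\pol(\rho) \cap \pol(\theta) \subseteq \pol(\, ^{h}\beta_{h}^{m})$, I would take $f \in \pol(\rho) \cap \pol(\theta)$ of arity $p$ together with columns $(a_{1c}, \ldots, a_{mc}) \in \, ^{h}\beta_{h}^{m}$ for $1 \leq c \leq p$, and unfold $\, ^{h}\beta_{h}^{m} = \bigcap_{\sigma \in \mathcal{S}_{m}} (\rho_{h-1,\theta}^{m})_{\sigma}$. For each $\sigma$ and each column $c$, the defining witnesses $u_{\sigma(r)c} \in [a_{\sigma(r)c}]_{\theta}$ (for $h \leq r \leq m$) produce $h$-subsets inside $\rho$. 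Applying $f$ row-by-row, the fact that $f \in \pol(\rho)$ transports these $h$-subsets and $f \in \pol(\theta)$ transports the $\theta$-equivalences $f(u_{\sigma(r)1}, \ldots, f(u_{\sigma(r)p})\, \theta \, f(a_{\sigma(r)1}, \ldots, a_{\sigma(r)p})$, so the image sits in every $(\rho_{h-1,\theta}^{m})_{\sigma}$ and therefore in $\, ^{h}\beta_{h}^{m}$.

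For strictness of the first inclusion, pick $(v_1, \ldots, v_h) \in E_{k}^{h} \setminus \rho$ and $(a,b) \notin \theta$, and take the $h$-ary operation $f$ defined by formula~(\ref{function-for-strict-inclusion-f}) with the $v_i$ in place of the $u_i$. Clearly $f \in \pol(\theta)$ and $f \notin \pol(\rho)$ (same witness matrix as before). The point is that $f \in \pol(\, ^{h}\beta_{h}^{m})$: the hypothesis $\, ^{h}\beta_{h} = E_{k}^{h}$ gives, for each $\sigma \in \mathcal{S}_{h}$, witnesses $d_{r} \in [v_{r}]_{\theta}$ with $\{v_{\sigma(1)}, \ldots, v_{\sigma(h-1)}, d_{\sigma(h)}, \ldots, d_{\sigma(h)}\}^{h} \subseteq \rho$, so $\{v_1, \ldots, v_h\}^{m} \subseteq \, ^{h}\beta_{h}^{m}$, and $f$'s image on any row of the Boolean witness matrix lies in $\{v_1, \ldots, v_h\}$, hence in $\, ^{h}\beta_{h}^{m}$.

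For the second strict inclusion I would choose $(w_1, \ldots, w_m) \in E_{k}^{m} \setminus \, ^{h}\beta_{h}^{m}$ (nonempty by hypothesis) and a pair $(a,b) \notin \theta$, and reuse the $m$-ary operation $h$ defined by formula~(\ref{function-for-strict-inclusion-h-in-theta}) with the $w_{i}$ in place of the $u_{i}$. The implication $a_i \theta a_j \Rightarrow i=j$ built into the defining tuples $a_1, \ldots, a_m$ guarantees $h \in \pol(\theta)$, while the tuples $a_1, \ldots, a_m$ themselves belong to $\, ^{h}\beta_{h}^{m}$ by the minimality of $m$ (so $\, ^{h}\beta_{h}^{m-1} = E_{k}^{m-1}$, together with total reflexivity), whereas $h(a_1, \ldots, a_m) = (w_1, \ldots, w_m) \notin \, ^{h}\beta_{h}^{m}$. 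The only real obstacle I foresee is a careful bookkeeping of the intersection over $\sigma \in \mathcal{S}_{m}$ in the definition of $\, ^{h}\beta_{h}^{m}$; once this quantifier shuffling is handled, the argument parallels Lemma~\ref{lem-rec-II-gamma5} almost verbatim.
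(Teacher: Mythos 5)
Your proposal matches the paper's intent exactly: the published proof of Lemma~\ref{lemma-reciproque-gamma5} consists of setting $m=\min\{l>h:\, ^{h}\beta_{h}^{l}\neq E_{k}^{l}\}$ and asserting that the chain $\pol(\rho)\cap\pol(\theta)\varsubsetneq\pol(\, ^{h}\beta_{h}^{m})\cap\pol(\theta)\varsubsetneq\pol(\theta)$ is ``easy to prove,'' and you establish precisely that chain by the same template as Lemma~\ref{lem-rec-II-gamma5} (specialized to $n=h-1$), using the operations from (\ref{function-for-strict-inclusion-f}) and (\ref{function-for-strict-inclusion-h-in-theta}). Apart from a harmless typo in your witness set (the repeated $d_{\sigma(h)}$), the argument is correct and essentially identical to the paper's.
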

\begin{proof}
Set $m=\min\{l\in\{1;\cdots;k\}\setminus\{1;\cdots;h\}/ \, ^{h}\beta_{h}^{l}\neq E_{k}^{h}\}$. It is easy to prove that $\pol(\rho)\cap
\pol(\theta)\varsubsetneq \pol(\, ^{h}\beta_{h}^{m})\cap \pol(\theta)\varsubsetneq \pol(\theta)$.
\end{proof}

\begin{lemma}\label{lemma-classe-central}\text{ }
If $\, ^{h}\beta_{h}^{k}= E_k^{k}$, then $\pol(\theta)\cap \pol(\rho)$ is
 maximal in $\pol(\theta)$.
\end{lemma}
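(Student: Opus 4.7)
The plan is to show that under the hypothesis $\, ^{h}\beta_{h}^{k} = E_{k}^{k}$, together with all the standing assumptions accumulated by the preceding lemmas ($\eta \subseteq \rho$, $\, ^{h}\beta_{h} = E_{k}^{h}$, $\, ^{h}\beta_{h}^{l} = E_{k}^{l}$ for every $h < l \leq k$, and $\, ^{h}\beta_{h}^{\prime} = D_{\varrho_{1}\varrho_{2}}$), the relation $\rho$ falls under condition I of Theorem \ref{maintheorem}; the maximality conclusion then follows immediately from Proposition \ref{sufficiency-direction}.

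The core step is to produce a transversal of order $h-1$ for the $\theta$-classes, promoting the transversal of order $h-2$ that the induction has already delivered. Unfolding the definition, $\, ^{h}\beta_{h}^{k} = \bigcap_{\sigma \in \mathcal{S}_{k}} (\rho_{h-1,\theta}^{k})_{\sigma} = E_{k}^{k}$ asserts that for every tuple $(a_{1}, \ldots, a_{k}) \in E_{k}^{k}$ and every $(h-1)$-subset $I \subseteq \{1, \ldots, k\}$, there exist $w_{i} \in [a_{i}]_{\theta}$ for $i \notin I$ with every $h$-subset of $\{a_{i} : i \in I\} \cup \{w_{i} : i \notin I\}$ lying in $\rho$. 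Specializing to $(a_{1}, \ldots, a_{k}) = (0, 1, \ldots, k-1)$ and letting $I$ range over all $(h-1)$-subsets supplies, for each class $C_{j}$, a pool of candidate representatives in $C_{j}$ that combine well with arbitrary $(h-1)$-tuples of $E_{k}$. Mirroring the passage that follows Lemma \ref{lem-rec-I-gamma4}---where an order-$1$ transversal was built from $\varsigma^{k} = E_{k}^{k}$ by setting $T_{i} := \min(C_{i}) \cap \{u_{0}, \ldots, u_{k-1}\}$---we define $T_{j} := \min(C_{j})$ intersected with this pool and verify that $T = \{T_{0}, \ldots, T_{t-1}\}$ is a single transversal of order $h-1$.

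Once $T$ is in hand, Definition \ref{def-theta-transversal} says that every member of a transversal of order $h-1$ forms an $h$-tuple in $\rho$ when placed in one coordinate alongside arbitrary entries in the remaining $h-1$ coordinates; by total symmetry of $\rho$, each $T_{j}$ is therefore a central element of $\rho$. Thus every $\theta$-class contains a central element, and in conjunction with $\eta \subseteq \rho$ (Lemma \ref{major-criterion}) this places $\rho$ in type I, after which Proposition \ref{sufficiency-direction} yields that $\pol(\theta) \cap \pol(\rho)$ is maximal in $\pol(\theta)$. The principal obstacle is the transversal construction itself: the witnesses $w_{i}$ produced by $\, ^{h}\beta_{h}^{k} = E_{k}^{k}$ a priori depend on the chosen subset $I$, so one must argue that a single class-by-class choice can be made that functions uniformly across all such $I$; this uniformity is exactly the combinatorial content encoded by the equality $\, ^{h}\beta_{h}^{\prime} = D_{\varrho_{1}\varrho_{2}}$ established just before the lemma.
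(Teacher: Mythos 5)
Your proposal follows the same route as the paper's own proof, which consists entirely of the assertion that ${}^{h}\beta_{h}^{k}=E_{k}^{k}$ forces every $\theta$-class to contain a central element of $\rho$, whence $\rho$ is of type I (using $\eta\subseteq\rho$ from Lemma \ref{major-criterion}) and Proposition \ref{sufficiency-direction} applies. Your reformulation via a transversal of order $h-1$ is equivalent to that assertion: by total symmetry of $\rho$ and Definition \ref{def-theta-transversal}, the members of such a transversal are precisely central elements distributed one per $\theta$-class. The substantive difference is that you make explicit the step the paper passes over in silence: the witnesses extracted from ${}^{h}\beta_{h}^{k}=E_{k}^{k}$ (equivalently from $\rho_{h-1,\theta}^{k}$ and its permuted copies) depend on the designated $(h-1)$-set of positions, whereas a central element requires a single representative of each class working uniformly over all such choices. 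You correctly identify this as the principal obstacle, but your resolution is not complete: the condition ${}^{h}\beta_{n+1}'=D_{\varrho_{1}\varrho_{2}}$ is used in the paper's induction to produce transversals of order $n$ on the way to the assumption ${}^{h}\beta_{h}=E_{k}^{h}$, and even if one reads the induction as reaching $n=h-1$, that condition only supplies, for each $(h-1)$-set $I$ of classes, representatives $u_{jI}$ of the remaining classes valid for tuples drawn from the classes in $I$; when $t>h$ a fixed class $C_{j}$ sits outside several such $I$, and the passage to a single $u\in C_{j}$ valid for all of them is exactly the uniformity you set out to establish. So your argument and the paper's share the same unclosed step; yours has the merit of locating it precisely, which the paper does not.
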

\begin{proof}
If $\, ^{h}\beta_{h}^{k}= E_k^{k}$, then each equivalence class of
$\theta$ has a central element of $\rho$. Since $\,
\eta\subseteq\rho$, $\rho$ is of type I and
$\pol(\rho)\cap\pol(\theta)$ is maximal in $\pol(\theta)$ by
Proposition \ref{sufficiency-direction}.
\end{proof}

We are ready now to give the proof of Proposition
\ref{completeness-criterion} and Theorem \ref{maintheorem}.
\begin{proof}[Proof of Proposition \ref{completeness-criterion}]
Combining Lemmas \ref{major-criterion}, \ref{lem-rec-I}, \ref{lem-rec-II}, \ref{lem-rec-I-gamma3}, \ref{lem-rec-II-gamma3}, \ref{lem-rec-II-gamma3'},
\ref{lem-rec-O-gamma4}, \ref{lem-rec-I-gamma4}, \ref{lem-rec-I-gamma5}, \ref{lem-rec-II-gamma5}, \ref{lemma-reciproque-gamma5}, \ref{lemma-classe-central}
and Remark \ref{arity-criterion}, we obtain the result.
\end{proof}

\begin{proof}[Proof of Theorem \ref{maintheorem}]
It follows from Propositions \ref{sufficiency-direction} and
\ref{completeness-criterion}.
\end{proof}

Fourthly, we look at $h$-regular relations.
\subsection{$h$-regular relations}\label{sec7}

As an $h$-regular relation is totally reflexive and totally
symmetric, some results state in the previous subsection can be
applied to the $h$-regular relation. Besides an $h$-regular
generated relation does not contain a central element. We will prove
that there is submaximality if and only if $\rho$ is
$\theta$-closed(or of type II). We begin this subsection with some
examples of $h$-regular relations.
 \begin{example}
Let $k\geq 3$ be an integer and $0\leq i<j<r<n\leq k-1$, we denote
by $A_{i,j,r}$ and $A_{i,j,r,n}$ the sets

$$A_{i,j,r}:=\{(\sigma(i),\sigma(j),\sigma(r)); \sigma\in\mathcal{S}_{\{i;j;r\}}\} $$ and
$$A_{i,j,r,n}:=\{(\sigma(i),\sigma(j),\sigma(r),\sigma(n)); \sigma\in\mathcal{S}_{\{i;j;r;n\}}\}.$$

We consider the following equivalence relations $\theta_{6}$ $\theta_{7}$ $\theta_{8}$ on $E_{12}$ defined respectively by their equivalence classes
denoted by $C_{m}^{i}, 6\leq i\leq 8$ as follows:

$$ C_{0}^{6}=\{0;1;2;3;4\}, C_{1}^{6}=\{5;6;7\}, C_{2}^{6}=\{8;9;10;11\};$$

$$ C_{0}^{7}=\{0;1;5;8\}, C_{1}^{7}=\{2;6;9;11\}, C_{2}^{7}=\{3;4;7;10\};$$

$$ C_{0}^{8}=\{0;1\}, C_{1}^{8}=\{2\}, C_{2}^{8}=\{3;4\}, C_{3}^{8}=\{5\}, C_{4}^{8}=\{6\}, C_{5}^{8}=\{7\},$$ $$C_{6}^{8}=\{8\}, C_{7}^{8}=\{9\},
C_{8}^{8}=\{10\}, C_{9}^{8}=\{11\};$$

and the relation

$$\Upsilon_{6}=\{(x_1,x_2,x_3)\in E_{12}^{3}: (x_1,x_2)\in\theta_{6},\textit{ or } (x_2,x_3)\in\theta_{6},\textit{ or } (x_1,x_3)\in\theta_{6}\}.$$
$$\Upsilon_{7}=\underset{\sigma\in\mathcal{S}_{3}}\cup(\Upsilon)_{\sigma}$$ where
$$\Upsilon=\{(x_1,x_2,x_3)\in E_{12}^{3}: (x_1,x_2)\in\theta_{6}, (x_2,x_3)\in\theta_{7}\}.$$

It is easy to see that $\Upsilon_{6}$ is a $\theta_{6}$-closed
$3$-regular relation associated to $T=\{\theta_{6}\}$, and
$\Upsilon_{7}$ is a $\theta_{8}$-closed $3$-regular relation
associated to $T=\{\theta_{6};\theta_{7}\}$.
\end{example}
We continue with the characterization of $\theta$-closed $h$-regular
relation.

\begin{lemma}\label{caracterisation-relation-h-reguliere-theta-fermee}
For $h\geq3$, let $\rho$ be an $h$-regular relation on $E_{k}$
determined by the $h$-regular family $T =
\{\theta_{1};\cdots;\theta_{m}\}$ and $\theta$ a nontrivial
equivalence relation on $E_{k}$. $\rho$ is  $\theta$-closed iff
$\theta\subseteq\theta_{i}$, for all $1\leq i\leq m$.
\end{lemma}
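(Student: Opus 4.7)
The plan is to reduce the biconditional to the equality $\rho_{0,\theta} = \rho$ (noting $\rho \subseteq \rho_{0,\theta}$ is automatic) and handle each direction separately, the easy direction by a ``same $\theta_i$-classes'' argument and the hard direction by a direct construction using $h$-regularity.

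For the ($\Leftarrow$) direction, I would assume $\theta \subseteq \theta_i$ for every $i\in\{1,\ldots,m\}$, then take $(a_1,\ldots,a_h)\in\rho_{0,\theta}$ with witnesses $u_j\in[a_j]_\theta$ and $(u_1,\ldots,u_h)\in\rho$. Because $\theta\subseteq\theta_i$, we get $[a_j]_\theta\subseteq[a_j]_{\theta_i}$, so $a_j$ and $u_j$ fall in the same $\theta_i$-class for every $i$ and every $j$. Hence the families $\{[a_j]_{\theta_i}:1\le j\le h\}$ and $\{[u_j]_{\theta_i}:1\le j\le h\}$ coincide; since the second has at most $h-1$ elements (as $(u_1,\ldots,u_h)\in\rho$), so does the first, yielding $(a_1,\ldots,a_h)\in\rho$.

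For the ($\Rightarrow$) direction I argue by contraposition. Assume $\theta\not\subseteq\theta_j$ for some index $j$ and pick $x,y\in E_k$ with $x\,\theta\,y$ but $[x]_{\theta_j}\neq[y]_{\theta_j}$. Label the $h$ classes of $\theta_j$ as $B_1,\ldots,B_h$ with $x\in B_1$, $y\in B_2$. For each $i\in\{3,\ldots,h\}$, apply the $h$-regularity of $T$ to the choice of classes ($B_i$ of $\theta_j$ and $[x]_{\theta_l}$ of $\theta_l$ for $l\neq j$) to obtain an element $u_i\in B_i\cap\bigcap_{l\neq j}[x]_{\theta_l}$. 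Form $u:=(x,x,u_3,\ldots,u_h)$ and $a:=(x,y,u_3,\ldots,u_h)$. Then $a_i\,\theta\,u_i$ for every $i$ (using $x\,\theta\,y$ at coordinate $2$), so $a\in\rho_{0,\theta}$. The tuple $u$ meets the $\theta_j$-classes $\{B_1,B_3,\ldots,B_h\}$, exactly $h-1$ of them, and for $l\neq j$ every component of $u$ lies in $[x]_{\theta_l}$, so $u$ meets a single $\theta_l$-class; thus $u\in\rho$. On the other hand, $a$ meets all of $B_1,B_2,\ldots,B_h$, i.e.\ all $h$ classes of $\theta_j$, so $a\notin\rho$. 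This exhibits $a\in\rho_{0,\theta}\setminus\rho$, so $\rho$ is not $\theta$-closed.

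The only delicate point is the construction of $u_3,\ldots,u_h$: one must prescribe both the $\theta_j$-class (to force $a\notin\rho$) and the $\theta_l$-class for each $l\neq j$ (to keep $u\in\rho$), and the existence of such elements rests precisely on the defining nonempty-intersection property of the $h$-regular family $T$. The requirement $h\geq 3$ is used implicitly to guarantee at least two free coordinates $u_3,\ldots,u_h$, and also to ensure $2\le h-1$ so that $u$ meeting at most two $\theta_l$-classes is permissible.
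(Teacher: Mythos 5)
Your proof is correct, and both directions rest on the same combinatorial heart as the paper's: the backward direction is the observation that replacing entries by $\theta$-related (hence $\theta_i$-related) ones does not change the set of $\theta_i$-classes met, which is exactly the paper's terse remark that $\theta\circ\theta_i\circ\theta\subseteq\theta_i$; the forward direction in both cases extends a $\theta$-related pair lying in two distinct $\theta_j$-classes to an $h$-tuple meeting all $h$ classes of $\theta_j$. The organization of the forward direction differs slightly: the paper first derives $\eta\subseteq\rho$ from total reflexivity plus $\theta$-closedness, and then only needs to pick representatives of $h-2$ remaining classes of the single relation $\theta_j$ (pairwise $\theta_j$-inequivalent, with no constraint from the other $\theta_l$), whereas your contrapositive construction must certify $u=(x,x,u_3,\ldots,u_h)\in\rho$ directly and therefore invokes the full nonempty-intersection property of the $h$-regular family to control all the $\theta_l$ simultaneously. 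Both are valid; the paper's route is marginally more economical in what it asks of $T$, while yours avoids the intermediate claim $\eta\subseteq\rho$. One cosmetic slip: for $h=3$ there is only one free coordinate $u_3$, not ``at least two,'' but nothing in your argument actually needs more than $h-2\geq 1$ of them.
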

\begin{proof}
$\Rightarrow)$ Firstly, we show that $\eta=\{(a_{1},\dots,a_{h})\in
E_{k}^{h}:\ \ (a_{1},a_{2})\in\theta\}\subseteq\rho$. Let
$(a_{1},\dots,a_{h})\in\eta$; since $\rho$ is totally reflexive, we
have $(a_{1},a_{1},a_{3},\dots,a_{h})\in\rho$ and
$(a_{1},\dots,a_{h})\theta(a_{1},a_{1},a_{3},\dots,a_{h})$. Hence
$(a_{1},\dots,a_{h})\in\rho_{0,\theta}=\rho$. Our next step is to
show that $\theta\subseteq\theta_{i}$ for all $1\leq i\leq m$. Let
$i\in\{1,\dots,m\}$ and $(a,b)\in\theta$; set
$A_{i}=E_{k}/\theta_{i}\setminus\{[a]_{\theta_{i}},[b]_{\theta_{i}}\}$.
It is easy to see that $|A_{i}|\geq h-2$; choose
$(a_{1},\dots,a_{h-2})\in E_{k}^{h-2}$ such that
$[a_{p}]_{\theta_{i}}\in A_{i}$ for all $1\leq p\leq h-2$ and
$(a_{p},a_{q})\notin\theta_{i}$ for all $1\leq p<q\leq h-2$. Due to
$\eta\subseteq\rho$, we have $(a,b,a_{1},\dots,a_{h-2})\in\rho$;
therefore $(a,b)\in\theta$ and $\theta\subseteq\theta_{i}$.

$\Leftarrow)$ It follows from the fact that
$\theta\circ\theta_{i}\circ\theta\subseteq\theta_{i}$ for all $1\leq
i\leq m$.
\end{proof}

\begin{lemma}\label{caracterisation-relation-h-reguliere-theta-fermee-2}
For $h\geq3$, let $\rho$ be an $h$-ary relation and $\theta$ a
nontrivial equivalence relation on $E_{k}$. $\rho$ is a
$\theta$-closed $h$-regular relation
    iff there exists an $h$-regular relation $\psi$ on $E_{t}$ such
that $\rho=\varphi^{-1}(\psi)$.
\end{lemma}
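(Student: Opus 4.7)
The proof will pivot on Lemma \ref{caracterisation-relation-h-reguliere-theta-fermee}, which already tells us that a $\theta$-closed $h$-regular relation has the special feature that every equivalence relation in its generating family contains $\theta$. That observation makes it possible to descend the family to $E_{t}$; conversely, preimages under $\varphi$ automatically respect $\theta$.

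For the implication ($\Leftarrow$), suppose $\psi$ is an $h$-regular relation on $E_{t}$ determined by the $h$-regular family $\{\psi_{1},\ldots,\psi_{m}\}$. Define $\theta_{i}$ on $E_{k}$ by $(a,b)\in\theta_{i}\Leftrightarrow (\varphi(a),\varphi(b))\in\psi_{i}$. Each $\theta_{i}$ is an equivalence relation containing $\theta$, has exactly $h$ classes (its classes are preimages under $\varphi$ of the $h$ classes of $\psi_{i}$), and since $\varphi$ is surjective, the intersection-of-classes condition transfers from the $\psi_{i}$ to the $\theta_{i}$. Hence $T=\{\theta_{1},\ldots,\theta_{m}\}$ is an $h$-regular family on $E_{k}$; let $\rho'$ be the $h$-regular relation it determines. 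A direct unwinding of definitions shows $\rho'=\varphi^{-1}(\psi)=\rho$, because the $h$-tuple $(a_{1},\ldots,a_{h})$ meets at most $h-1$ classes of $\theta_{i}$ iff $(\varphi(a_{1}),\ldots,\varphi(a_{h}))$ meets at most $h-1$ classes of $\psi_{i}$. Finally, $\rho$ is $\theta$-closed: if $(a_{1},\ldots,a_{h})\in\rho$ and $u_{j}\in[a_{j}]_{\theta}$, then $\varphi(u_{j})=\varphi(a_{j})$, so $(u_{1},\ldots,u_{h})\in\varphi^{-1}(\psi)=\rho$.

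For the implication ($\Rightarrow$), suppose $\rho$ is a $\theta$-closed $h$-regular relation with generating family $T=\{\theta_{1},\ldots,\theta_{m}\}$. By Lemma \ref{caracterisation-relation-h-reguliere-theta-fermee}, $\theta\subseteq\theta_{i}$ for each $i$. Define on $E_{t}$ the relation $\widetilde{\theta_{i}}$ by $([a]_{\theta},[b]_{\theta})\in\widetilde{\theta_{i}}\Leftrightarrow (a,b)\in\theta_{i}$; this is well defined exactly because $\theta\subseteq\theta_{i}$, and it is an equivalence relation on $E_{t}$. Each $\widetilde{\theta_{i}}$ has $h$ classes (there is a natural bijection between $\theta_{i}$-classes on $E_{k}$ and $\widetilde{\theta_{i}}$-classes on $E_{t}$, both indexed by the $h$ blocks of $\theta_{i}$). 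For arbitrary classes $\widetilde{B_{i}}$ of $\widetilde{\theta_{i}}$, pull back to the corresponding classes $B_{i}$ of $\theta_{i}$ in $E_{k}$; by $h$-regularity of $T$, $\bigcap B_{i}\neq\varnothing$, whence $\bigcap\widetilde{B_{i}}\supseteq\varphi(\bigcap B_{i})\neq\varnothing$. Thus $\widetilde{T}=\{\widetilde{\theta_{1}},\ldots,\widetilde{\theta_{m}}\}$ is an $h$-regular family on $E_{t}$; let $\psi$ be the $h$-regular relation on $E_{t}$ that it determines.

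It remains to verify $\rho=\varphi^{-1}(\psi)$. The identification $B\mapsto\widetilde{B}$ between $\theta_{i}$-classes and $\widetilde{\theta_{i}}$-classes makes this a straightforward translation: $(a_{1},\ldots,a_{h})\in\rho$ iff for each $i$ the tuple meets at most $h-1$ classes of $\theta_{i}$, iff $(\varphi(a_{1}),\ldots,\varphi(a_{h}))$ meets at most $h-1$ classes of $\widetilde{\theta_{i}}$, iff $(\varphi(a_{1}),\ldots,\varphi(a_{h}))\in\psi$, iff $(a_{1},\ldots,a_{h})\in\varphi^{-1}(\psi)$. The only real obstacle is ensuring that the family $\widetilde{T}$ is genuinely $h$-regular (correct class count and nonempty intersections); both follow from $\theta\subseteq\theta_{i}$ together with the surjectivity of $\varphi$, so the proof reduces to careful bookkeeping once the previous lemma has been invoked.
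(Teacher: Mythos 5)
Your proof is correct and follows essentially the same route as the paper: transporting the $h$-regular family along $\varphi$ (preimages of the $\psi_i$ for the ``if'' direction, quotients/images of the $\theta_i$ for the ``only if'' direction, the latter legitimized by Lemma \ref{caracterisation-relation-h-reguliere-theta-fermee}). The only cosmetic difference is that you verify $\theta$-closedness of $\varphi^{-1}(\psi)$ directly where the paper cites the preceding lemma, and you spell out the class-by-class bookkeeping that the paper labels ``clearly''.
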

\begin{proof}
Firstly, let us assume that there exists a $h$-regular relation $\psi$ on $E_{t}$ and put $\bot=\{\nu_{1};\cdots;\nu_{n}\}$ the $h$-regular family
associated to $\psi$. Clearly $\varphi^{-1}(\bot)=\{\varphi^{-1}(\nu_{1});\cdots;\varphi^{-1}(\nu_{n})\}$ is a $h$-regular family and
$\rho=\varphi^{-1}(\psi)$ is exactly the $h$-regular relation associated to $\varphi^{-1}(\bot)$. Moreover, for all $1\leq i\leq n$ we have
$\theta\subseteq\varphi^{-1}(\nu_{i})$. Therefore, by Lemma \ref{caracterisation-relation-h-reguliere-theta-fermee}, $\rho$ is $\theta$-closed.\\
Conversely, assume that $\rho$ is a $h$-regular relation
 determined by the $h$-regular family $T = \{\theta_{1};\cdots\theta_{r}\}$ and $\rho$ is $\theta$-closed. Clearly, for all
$1\leq i\leq r$, $\varphi(\theta_{i})$ is an equivalence relation with exactly $h$ equivalence classes whose are the images of equivalence classes of
$\theta_{i}$ by $\varphi$. It follows that $\cap\{\varphi(B_i)| 1\leq i\leq r\}$ is non-empty for arbitrary equivalence classes $\varphi(B_i)$ of
$\varphi(\theta_{i})$ , $1\leq i\leq r$. Therefore $\varphi(T)= \{\varphi(\theta_{1});\cdots\varphi(\theta_{r})\}$ is a $h$-regular family and
$\varphi(\rho)$ is a $h$-regular relation on $E_{t}$, associated to $\varphi(T)$. Moreover $\varphi^{-1}(\varphi(\rho))=\rho$ because $\rho$ is
$\theta$-closed. Therefore we have the result.
\end{proof}

We end this subsection with the characterization of $h$-regular
relations $\rho$ such that $\pol(\theta)\cap\pol(\rho)$ is maximal
in $\pol(\theta)$. We need the next lemma to prove our main result.

\begin{lemma}\cite{LAU}\label{caracterisation-relation-h-reguliere}
Let $\chi\subseteq E_{t}^{h}$ such that $\pol(\chi)$ is maximal in $\mathcal{L}_{t}$. Then $\pol(\theta)\cap\pol(\varphi^{-1}(\chi))$ is submaximal in
$\pol(\theta)$.
\end{lemma}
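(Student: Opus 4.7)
The plan is to use the quotient map $\varphi: E_k \to E_t$ to induce a clone homomorphism $\pi: \pol(\theta) \to \mathcal{O}(E_t)$ defined by the rule $\pi(f)(\varphi(a_1),\ldots,\varphi(a_n)) = \varphi(f(a_1,\ldots,a_n))$; the assumption $f \in \pol(\theta)$ is exactly what makes this well defined. The idea is to show that the interval $[\pol(\theta)\cap\pol(\varphi^{-1}(\chi)),\,\pol(\theta)]$ in the lattice of clones on $E_k$ is isomorphic under $\pi$ to the interval $[\pol(\chi),\,\mathcal{O}(E_t)]$ in $\mathcal{L}(E_t)$, so that the assumed maximality of $\pol(\chi)$ in $\mathcal{L}(E_t)$ transfers directly to the desired submaximality in $\pol(\theta)$.

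First I would verify that $\pi$ is a surjective clone homomorphism: preservation of composition is a direct calculation using that each involved operation preserves $\theta$, and surjectivity follows by choosing any section $s: E_t \to E_k$ of $\varphi$ and lifting $g \in \mathcal{O}^n(E_t)$ to $f(a_1,\ldots,a_n) := s(g(\varphi(a_1),\ldots,\varphi(a_n)))$, which is constant on $\theta$-classes in each argument and satisfies $\pi(f)=g$. Unwinding the definition of $\varphi^{-1}(\chi)$ gives $f \in \pol(\varphi^{-1}(\chi))$ iff $\pi(f) \in \pol(\chi)$, so $\pi^{-1}(\pol(\chi)) = \pol(\theta)\cap\pol(\varphi^{-1}(\chi))$; the inclusion into $\pol(\theta)$ is strict because any lift of some $g \in \mathcal{O}(E_t)\setminus\pol(\chi)$ lives in $\pol(\theta)\setminus\pol(\varphi^{-1}(\chi))$.

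The main obstacle is the following correction step: for any clone $C$ with $\pol(\theta)\cap\pol(\varphi^{-1}(\chi)) \subsetneq C \subseteq \pol(\theta)$, one must show $C=\pol(\theta)$. Any $f_0 \in C\setminus\pol(\varphi^{-1}(\chi))$ has $\pi(f_0)\notin\pol(\chi)$, so $\pi(C)\supsetneq\pol(\chi)$, and the maximality of $\pol(\chi)$ yields $\pi(C)=\mathcal{O}(E_t)$. Now pick any $f \in \pol(\theta)$ of arity $n$; by surjectivity there is $g \in C$ with $\pi(g)=\pi(f)$, i.e., $\varphi(f(a_1,\ldots,a_n))=\varphi(g(a_1,\ldots,a_n))$ for all tuples. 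Define the $(n+1)$-ary operation
\[
\kappa(a_1,\ldots,a_n,y) \;=\; \begin{cases} f(a_1,\ldots,a_n) & \text{if } y=g(a_1,\ldots,a_n),\\ y & \text{otherwise.}\end{cases}
\]
The key observation is that $\kappa(a_1,\ldots,a_n,y)\in[y]_\theta$ in every case (when $y=g(a_1,\ldots,a_n)$ this uses $\pi(f)=\pi(g)$, so $f(a_1,\ldots,a_n) \in [g(a_1,\ldots,a_n)]_\theta=[y]_\theta$). A short case analysis on whether the two inputs hit the ``$y=g$'' branch confirms $\kappa\in\pol(\theta)$, and one reads off that $\pi(\kappa)$ equals the last-coordinate projection $e_{n+1}^{n+1}$, which preserves every relation and in particular $\chi$. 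Hence $\kappa\in\pol(\theta)\cap\pol(\varphi^{-1}(\chi))\subseteq C$, and by construction $f(a_1,\ldots,a_n)=\kappa(a_1,\ldots,a_n,g(a_1,\ldots,a_n))$, so $f=\kappa[e_1^n,\ldots,e_n^n,g]\in C$. This forces $C=\pol(\theta)$ and completes the argument.
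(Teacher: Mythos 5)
Your proof is correct. The paper gives no argument of its own here---it simply points to Lemma 18.2.5 of Lau's book---and your quotient-homomorphism argument (the map $\pi(f)=\varphi\circ f$ on representatives, the identity $\pi^{-1}(\pol(\chi))=\pol(\theta)\cap\pol(\varphi^{-1}(\chi))$, the collapse $\pi(C)=\mathcal{O}(E_{t})$ via maximality of $\pol(\chi)$, and the correction operation $\kappa$ pulling an arbitrary $f\in\pol(\theta)$ back into $C$) is exactly the standard argument underlying that cited lemma.
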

\begin{proof}
See the proof of Lemma 18.2.5 Page 565 in \cite{LAU}.
\end{proof}
\begin{lemma}\label{major-criterion-regular}
If $\pol(\rho)\cap \pol(\theta)$ is maximal in $\pol(\theta)$, then
$\eta\subseteq \rho$.
\end{lemma}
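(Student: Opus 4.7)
The plan is to prove this lemma by contraposition, following the same strategy used in Lemma \ref{major-criterion}. Assume that $\eta \not\subseteq \rho$, so there exists $(u_1,\ldots,u_h)\in \eta \setminus \rho$. I introduce the auxiliary relation
\[\eta^{1}=\{(v_1,v_2,\ldots,v_h)\in\rho \,:\, v_1 \,\theta\, v_2\},\]
and aim to show
\[\pol(\rho)\cap\pol(\theta)\varsubsetneq \pol(\eta^{1})\varsubsetneq \pol(\theta),\]
which would contradict the maximality of $\pol(\rho)\cap\pol(\theta)$ in $\pol(\theta)$.

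For the inclusion $\pol(\rho)\cap\pol(\theta)\subseteq \pol(\eta^{1})$, I would simply note that any $f$ preserving both $\rho$ and $\theta$ clearly sends tuples from $\eta^{1}$ to tuples in $\rho$ whose first two coordinates are $\theta$-related. Strictness follows from the operation $f$ defined by (\ref{function-for-strict-inclusion-f}) in the proof of Lemma \ref{major-criterion}: this $f$ is constant on each $\theta$-class of its argument, hence preserves $\theta$; it preserves $\eta^{1}$ because, applied to tuples in $\eta^{1}$, it is constant on the first two coordinates, and every tuple of the form $(x,x,v_3,\ldots,v_h)$ lies in $\rho$ by total reflexivity of the $h$-regular relation; but evaluated on the matrix whose columns are the $\theta$-analogues of $(a,a,\ldots,a),(a,b,a,\ldots,a),\ldots,(a,a,\ldots,a,b)$ (each of which is in $\rho$ because these columns have repeated entries, so lie in $\tau_h^{E_k}\subseteq \rho$), one obtains $(u_1,u_2,\ldots,u_h)\notin \rho$.

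For the inclusion $\pol(\eta^{1})\subseteq\pol(\theta)$, I would observe that $pr_{12}(\eta^{1})=\theta$: given $(a,b)\in \theta$, total reflexivity gives $(a,b,a,\ldots,a)\in \rho$, and $a\,\theta\, b$ yields $(a,b,a,\ldots,a)\in \eta^{1}$. Strictness is obtained by constructing, for a fixed $(u_1,\ldots,u_h)\in \eta\setminus \rho$ and elements $a,b,c\in E_k$ with $a\theta b$, $a\neq b$, $(a,c)\notin\theta$, the $(h-1)$-ary operation $g$ from the proof of Lemma \ref{major-criterion}. The tuples $\mathbf{W}_i$ there are chosen so that $\mathbf{W}_1\,\theta\,\mathbf{W}_2$ while $\mathbf{W}_i\not\theta\,\mathbf{W}_j$ for all other pairs, and $g$ sends each $\mathbf{W}_i$ to $u_i$; this yields $g\in\pol(\theta)$, but the matrix whose columns are the $\mathbf{W}_i$'s lies in $\eta^{1}$ by total reflexivity (every column has repeated $a$'s), while $g$ applied to it produces $(u_1,\ldots,u_h)\notin \eta^{1}$.

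The main point is that the argument of Lemma \ref{major-criterion} uses only total reflexivity and total symmetry of $\rho$ (never the existence of a central element), and both of these properties hold for every $h$-regular relation. Thus the construction transfers verbatim, and the only minor obstacle is double-checking that the operations $f$ and $g$ constructed there really never invoke a central element of $\rho$---they do not---so the same operations witness the required strict containments in the $h$-regular setting.
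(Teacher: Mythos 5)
Your proposal is correct and matches the paper exactly: the paper's proof of this lemma is literally the single line ``See the proof of Lemma \ref{major-criterion},'' and you have carried out precisely that transfer, including the worthwhile check that the constructions of $f$, $g$ and $\eta^{1}$ rely only on total reflexivity and total symmetry (both enjoyed by $h$-regular relations) and never on a central element.
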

\begin{proof}
See the proof of Lemma \ref{major-criterion}.
\end{proof}
Now we give the main result of this subsection.

\begin{proposition}\label{reg-max-in-equiv-completeness}
Let $\theta$ be a nontrivial equivalence relation and  $\rho$ be an
$h$-regular relation on $E_{k}$ determined by the $h$-regular family
$T = \{\theta_{1};\cdots;\theta_{m}\}$. $\pol(\theta)\cap
\pol(\rho)$ is maximal in $\pol(\theta)$ if and only if $\rho$ is
$\theta$-closed.
\end{proposition}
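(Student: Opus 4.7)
The plan is to split the proof into a sufficiency direction (nearly immediate) and a necessity direction (requiring the machinery of Section \ref{central-relation-subsection}).

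For sufficiency ($\Leftarrow$), suppose $\rho$ is a $\theta$-closed $h$-regular relation. By Lemma \ref{caracterisation-relation-h-reguliere-theta-fermee-2} there exists an $h$-regular relation $\psi$ on $E_{t}$ with $\rho=\varphi^{-1}(\psi)$. Since $\psi$ is $h$-regular on $E_{t}$, Rosenberg's theorem (as cited in the introduction) gives that $\pol(\psi)$ is maximal in $\mathcal{L}_{t}$. Lemma \ref{caracterisation-relation-h-reguliere} then applies directly with $\chi=\psi$ to yield that $\pol(\theta)\cap\pol(\rho)$ is submaximal, i.e.\ maximal, in $\pol(\theta)$.

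For necessity ($\Rightarrow$), assume $\pol(\theta)\cap\pol(\rho)$ is maximal in $\pol(\theta)$. First invoke Lemma \ref{major-criterion-regular} to conclude $\eta\subseteq\rho$. Since $\rho$ is totally reflexive and totally symmetric, $\rho\subseteq\rho_{0,\theta}$. Suppose, for contradiction, that $\rho\subsetneq\rho_{0,\theta}$. If $\rho_{0,\theta}\subsetneq E_{k}^{h}$, Lemma \ref{lem-rec-I} produces the strict chain $\pol(\theta)\cap\pol(\rho)\subsetneq\pol(\theta)\cap\pol(\rho_{0,\theta})\subsetneq\pol(\theta)$, contradicting maximality. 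If instead $\rho_{0,\theta}=E_{k}^{h}$, I would cascade through the inductive construction of Section \ref{central-relation-subsection} — defining, in turn, the analogues $\varsigma$, $\gamma'$, $\zeta$, $\zeta^{k}$ and the family $\,^{h}\beta_{n+1}^{l}$ — applying Lemmas \ref{lem-rec-II}, \ref{lem-rec-I-gamma3}, \ref{lem-rec-II-gamma3}, \ref{lem-rec-II-gamma3'}, \ref{lem-rec-O-gamma4}, \ref{lem-rec-I-gamma4}, \ref{lem-rec-I-gamma5}, \ref{lem-rec-II-gamma5}, \ref{lem-rec-II-beta-n'}, \ref{lemma-reciproque-gamma5} in sequence. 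Each application either produces a strictly intermediate subclone (contradicting maximality) or forces the process to continue; the induction terminates only at relations of type I, II, or III.

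The decisive observation is that an $h$-regular relation admits \emph{no} central element: given any $a\in E_{k}$ and any $\theta_{j}\in T$, the $h-1$ other $\theta_{j}$-classes contain representatives $a_{2},\ldots,a_{h}$ such that $\{a,a_{2},\ldots,a_{h}\}$ meets all $h$ classes of $\theta_{j}$, hence $(a,a_{2},\ldots,a_{h})\notin\rho$. Consequently type I (which demands a central element in every $\theta$-class) is unavailable, and likewise the terminating type III configuration — whose sufficiency proof in Proposition \ref{sufficiency-direction} and Lemma \ref{existence-unanimity-all} is built around a fixed central element $c$ — cannot arise here. Thus the induction necessarily terminates with type II, forcing $\rho=\rho_{0,\theta}$ and contradicting the assumption $\rho\subsetneq\rho_{0,\theta}$.

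The main obstacle is to verify cleanly that, in the $h$-regular setting, the chain of intermediate subclones built in Section \ref{central-relation-subsection} remains strict and that no terminating step can be a genuine type I or type III configuration. Concretely, one must re-examine the auxiliary operations used in the central-relations proofs (those that invoke a distinguished central element $c$) and argue that the absence of a central element here either prevents these operations from lying in $\pol(\rho)$ or forces the relation under consideration to coincide with $\rho_{0,\theta}$; this is precisely where the $h$-regular hypothesis (combined with $\eta\subseteq\rho$ and Lemma \ref{caracterisation-relation-h-reguliere-theta-fermee}) closes the argument.
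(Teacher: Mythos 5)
Your sufficiency direction is exactly the paper's argument (Lemma \ref{caracterisation-relation-h-reguliere-theta-fermee-2} plus Lemma \ref{caracterisation-relation-h-reguliere}), and it is fine. The necessity direction, however, has a genuine gap and also misses a much shorter route. The gap: you propose to rerun the entire cascade of Subsection \ref{central-relation-subsection}, but those lemmas and their auxiliary operations (in Lemma \ref{caracterisation-diagonal-all}, Lemma \ref{existence-unanimity-all}, and the type~III branch of the induction) are built on a distinguished central element $c$ of $\rho$, which an $h$-regular relation does not possess. You acknowledge this (``the main obstacle is to verify cleanly that\ldots'') but never resolve it, so the cascade is not actually available to you as stated. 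Worse, your argument for excluding the type~III terminus is not valid reasoning: the fact that the \emph{sufficiency proof} of type~III uses a central element does not show that a weakly $\theta$-closed $h$-regular relation of order $l\geq 1$ cannot exist; you would need a direct structural argument, and you do not give one.

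The paper's necessity proof bypasses all of this. From Lemma \ref{major-criterion-regular} one gets $\eta\subseteq\rho$, and then a two-line combinatorial argument (the one inside the proof of Lemma \ref{caracterisation-relation-h-reguliere-theta-fermee}) finishes: fix $i$ and $(a,b)\in\theta$; if $(a,b)\notin\theta_{i}$, choose representatives $a_{1},\ldots,a_{h-2}$ of the remaining $h-2$ classes of $\theta_{i}$, so that $(a,b,a_{1},\ldots,a_{h-2})$ meets all $h$ classes of $\theta_{i}$ and hence lies outside $\rho$, contradicting $\eta\subseteq\rho$. Thus $\theta\subseteq\theta_{i}$ for every $i$, which by Lemma \ref{caracterisation-relation-h-reguliere-theta-fermee} is equivalent to $\rho$ being $\theta$-closed. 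In other words, once $\eta\subseteq\rho$ holds, an $h$-regular relation is \emph{automatically} $\theta$-closed; there is no room for the type~I or type~III alternatives you were trying to rule out, and no induction is needed. You should replace your cascade with this direct argument.
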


\begin{proof}
$\Rightarrow)$ It follows from Lemma \ref{major-criterion-regular}
that $\eta\subseteq\rho$; and in the light of the proof of Lemma
\ref{caracterisation-relation-h-reguliere-theta-fermee}, we conclude
that $\theta\subseteq\theta_{i}$ for all $1\leq i\leq m$. It follows
from Lemma \ref{caracterisation-relation-h-reguliere-theta-fermee}
that $\rho$ is $\theta$-closed.

$\Leftarrow)$ Combining Lemmas
\ref{caracterisation-relation-h-reguliere-theta-fermee},
\ref{caracterisation-relation-h-reguliere-theta-fermee-2},
\ref{caracterisation-relation-h-reguliere}, we have the result.
\end{proof}

\section{Conclusion}\label{sec8}
In this paper, we characterized the relations $\rho$ from the
Rosenberg's list for which $\pol(\theta) \cap \pol(\rho)$ is maximal
in $\pol(\theta)$, where $\theta$ is a nontrivial equivalence
relation. The classification of all central relations $\rho$ on
$E_k$ such that the clone $\pol(\theta) \cap \pol(\rho)$ is maximal
in $\pol(\theta)$ improves Temgoua and Rosenberg's results
\cite{TEM-ROS}. We plan in a future project to characterize the
meet-irreducible submaximal clones of $\pol(\theta)$ for a
nontrivial equivalence relation $\theta$.


\end{document}